\numberwithin{equation}{section}
\newtheorem{theorem}{Theorem}[section]
\newtheorem{definition}[theorem]{Definition}
\newtheorem{lemma}[theorem]{Lemma}
\newtheorem{remark}[theorem]{Remark}
\newtheorem{proposition}[theorem]{Proposition}
\newtheorem{corollary}[theorem]{Corollary}
\numberwithin{equation}{section}
\newcommand{\T}{{\mathbb{T}}}
\newcommand{\Z}{{\mathbb{Z}}}
\newcommand{\ga}{\gamma}
\newcommand{\ru}{\mathring{R}_q}
\newcommand{\run}{\mathring{R}_n}
\newcommand{\runq}{\mathring{R}_{n,q}}
\newcommand{\Ru}{\mathring{R}}
\newcommand{\p}{\partial}
\renewcommand{\P}{\mathbb{P}}
\renewcommand{\div}{{\mathrm{div}}}
\newcommand{\curl}{{\mathrm{curl}}}
\newcommand{\om}{\Omega}
\newcommand{\omw}{\widetilde{w}}
\newcommand{\la}{\lambda_q}
\newcommand{\laq}{\lambda_{q+1}}
\newcommand{\dq}{\delta_q}
\newcommand{\dqq}{\delta_{q+1}}
\newcommand{\dqqq}{\delta_{q+2}}
\newcommand{\Lo}{L}
\newcommand{\ft}{\mathcal{F}_{t}}
\newcommand{\F}{\mathcal{F}}
\newcommand{\R}{{\mathbb R}}
\def\a{{\alpha}}
\def\wt{\widetilde}
\def\9{{\infty}}
\def\bbr{{\mathbb{R}}}
\def\bbn{{\mathbb{N}}}
\def\bbp{{\mathbb{P}}}
\def\bbq{{\mathbb{Q}}}
\def\({\left(}
\def\){\right)}	
\def\[{\left[}	
\def\]{\right]}	
\begin{document}
	
\title[] {Non-Leray-Hopf solutions to 3D  stochastic hyper-viscous Navier-stokes equations: beyond the Lions exponents}

\author{Wenping Cao}
\address{School of Mathematical Sciences, Shanghai Jiao Tong University, China.}
\email[Wenping Cao]{caowenping1@sjtu.edu.cn}
\thanks{}

\author{Zirong Zeng}
\address{School of Mathematics, Nanjing University of Aeronautics and Astronautics, China.}
\email[Zirong Zeng]{beckzzr@nuaa.edu.cn}
\thanks{}

\author{Deng Zhang}
\address{School of Mathematical Sciences, CMA-Shanghai, Shanghai Jiao Tong University, China.}
\email[Deng Zhang]{dzhang@sjtu.edu.cn}
\thanks{}

\keywords{Convex integration, hyper-viscous Navier-stokes equations, Non-uniqueness, Lady\v{z}henskaya-Prodi-Serrin criteria, probabilistically strong solutions.}

\subjclass[2020]{35A02,\ 35D30,\ 35Q30,\ 60H15.}

\begin{abstract}
	We consider the 3D stochastic Navier-Stokes equations (NSE) on torus where the viscosity exponent can be larger than the Lions exponent 5/4. 
	 For arbitrarily prescribed divergence-free initial data in $L^{2}_x$, 
  we construct infinitely many probabilistically strong and analytically weak solutions in the class $L^{r}_{\om}L_{t}^{\gamma}W_{x}^{s,p}$, where $r\geq1$ and $(s, \gamma, p)$ lie in two supercritical regimes with respect to the Lady\v{z}henskaya-Prodi-Serrin (LPS) criteria. 
 It shows that 
  even in the high viscosity regime beyond the Lions exponent, 
  though solutions are unique in the Leray-Hopf class, 
  the uniqueness fails in the mixed Lebesgue spaces and, actually, 
  there exist infinitely manly 
  non-Leray-Hopf solutions which can be very close to the Leray-Hopf solutions.  
  Furthermore,  
  we prove the vanishing noise limit result, 
  which relates together 
  the  stochastic solutions 
  and the deterministic solutions 
  constructed by Buckmaster-Vicol \cite{BV-NS} 
  and the recent work  \cite{LQZZ-NSE}.
\end{abstract}

\maketitle

\section{Introduction and main results}\label{Introduction and main results}

\subsection{Background} \label{Subsec-intro}
 We are concerned with the  3-D stochastic  Navier-Stokes equations (NSE) with high viscosity
 on the torus $\T^{3}:=\[-\pi,\pi\]^{3}$,
\begin{equation}\label{equa-SNSE-High}
	\left\{\begin{array}{lc}
		\mathrm{d}u+(\nu(-\Delta)^{\a}u+(u\cdot \nabla)u +\nabla P) \mathrm{d}t =\mathrm{d}W_{t},\\
		\div u=0,\\
		u(0)=u_{0},
	\end{array}\right.
\end{equation}
where $u=(u_{1},u_{2},u_{3})^{\top}(t,x)\in \R^{3}$ represents the velocity field,  $P=P(t,x)\in \R$
is the  pressure of the fluid,  $ \nu \geq 0$ is the viscous  coefficient, 
$\a\in[1,2)$, and  $(-\Delta)^{\a}$ is the fractional Laplacian defined via Fourier transform on the flat torus
\begin{align*}
	\mathscr{F}\((-\Delta)^{\a}u\)(\xi)=|\xi|^{2\a}\mathscr{F} (u)(\xi),\quad\xi\in \Z^{3}.
\end{align*}
The noise $W$ is a $GG^{*}$-Wiener processe on a probability space 
$\(\om, \F, (\F_{t}), \P\)$, 
where $G$ is a Hilbert-Schmidit operator from some Hilbert space $U$ to $H_{\sigma}^{4}$, 
with $H_{\sigma}^{4}$ being the subspace of all mean-free and divergence-free functions in $H_{x}^{4}$.  
In the classical case where $\a=1$, system \eqref{equa-SNSE-High} reduces to the classical incompressible NSE. 
When $\nu=0$, it 
becomes the incompressible Euler equations. 

Since the poineering work by Leray \citep{Leray} in 1934, it is well-known that the 3-D NSE has global solutions in the energy class $C_{w}(0,\infty; L_{x}^{2})\cap L(0,\infty;H_{x}^{1})$.  
This class of solutions is now referred to as Leray-Hopf solutions, 
due to the important contributions by Hopf 
\citep{Hopf} in the the case of bounded domains. 
The theory of stochastic Leray-Hopf solutions has been well developed 
in terms of martingale solutions or probabilistically weak solutions. 
We refer to  \cite{FG95} for the case of bounded domains, \cite{MR05} for the case of whole space, 
and \cite{Brz13} in general unbounded domains. 
The uniqueness of Leray-Hopf solutions, however,  remains a challenging open problem.

On the positive side, due to the classical result of Lions \citep{Lions}, 
the 3-D hyper-viscous NSE is well-posed in the Leray-Hopf class when the viscosity exponent is no less than $5/4$, 
i.e., $\alpha \geq 5/4$. 
This uniqueness phenomenon also exhibits 
for the stochastic NSE in the high viscosity regime, 
where the stochastic Leray-Hopf solutions 
satisfy the 
energy inequality
\begin{align}
 \mathbb{E} \|u(t)\|_{L_{x}^{2}}^{2}+2\int_{0}^{t}\mathbb{E}\|u(s)\|_{\dot{H}_{x}^{\a}}^{2}\mathrm{d}s\leq \|u_{0}\|_{L_{x}^{2}}^{2} 
 + Tr(GG^{*})  t
\end{align} 
for any $t\in[0,T]$, 
see \cite{BJLZ23}. 
These results, actually, reveal that the high viscosity tends to stablize the system. 
Moreover, 
in view of the  Lady\v{z}henskaya-Prodi-Serrin (LPS)  criteria, 
weak solutions in 
the scaling invariant space $L_{t}^{\gamma}L_{x}^{p}$ when ${2}/{\gamma} + {3}/{p} = 1$ 
are automatically unique Leray-Hopf solutions, see \citep{CL-NSE1}.

On the flexible side, 
in the  breakthrough work by Buckmaster and Vicol \citep{BV-NS}, 
the non-uniqueness of weak/distributional solutions to the 3D NSE was proved in the space $C_{t}L_{x}^{2}$. The method of \citep{BV-NS} is based on convex integration, which was initiated by De Lellis and Sz\'{e}kelyhidi \citep{De Lellis-Elur} in the context of Euler equations and dates back to Nash \citep{Nash}. Afterwards,  Luo-Titi \cite{LT-hyperviscous NS} and Buckmaster-Colombo-Vicol \cite{BCV} proved the non-uniqueness of $C_{t}L^2_x$ weak solutions for the 3D hyper-dissipative NSE up to the Lions exponent, i.e. $\a<5/4$.  In particular,  the Lions exponent $5/4$ is the sharp threshold for the uniqueness/non-uniqueness of weak solutions to hyper-dissipative NSE in $C_{t}L_{x}^{2}$.  

Moreover,  Cheskidov-Luo proved the  sharp non-uniqueness of weak solutions to the 3D NSE near one LPS endpoint space  $L_{t}^{\gamma}L_{x}^{\infty}$ with $1\leq \gamma < 2$ \citep{CL-NSE1}, and to the 2D NSE near another LPS endpoint space $L_{t}^{\infty}L_{x}^{p}$  
with $1\leq p < 2$ \citep{CL-NSE2}. 
  For hyper-dissipative NSE and MHD equations with viscosity beyond the Lions exponent $5/4$, the sharp non-uniqueness at both LPS endpoints was recently proved in \cite{LQZZ-NSE,ZZL-MHD}.
  
In the stochastic case, there also have been  several progresses on the  non-uniqueness problem by convex integrations. A stochastic counterpart of convex integration scheme has been developed by Hofmanov\'{a}-Zhu-Zhu \citep{HZZ-1} to prove the  non-uniqueness in law 
of weak solutions to 3D stochastic NSE.    
The existence of global-in-time non-unique probabilistically strong  solutions in $L^q_t L^2_x$ with  $1\leq q<\infty$  was obtained in \citep{HZZ-3}.  
Moreover, Yamazaki 
proved the non-uniqueness in law 
for the stochastic NSE 
in the 3-D case up to the Lions exponent  \cite{Y22.3}, 
and in the 2-D case with diffusion weaker than a full Laplacian. 
In \citep{CDZ},  
Chen-Dong-Zhu proved the sharp non-uniqueness at one endpoint space $L_{t}^{\gamma}L_{x}^{\infty}$ with $1\leq \gamma <2$. 
Very recently, 
for any prescribed ﬁnite energy divergence-free initial data, inﬁnitely many probabilistically strong solutions  has been constructed 
for the 3D  stochastic NSE in the continuous energy class $C([0, \infty);L^2)$ 
by Cheskidov and the last two named authers \citep{Cheskidov-Zeng-Zhang}. 
We also would like to refer to 
\cite{MS24} 
for stochastic transport equations, 
and \cite{CLZ,Y23,Y23.2} 
stochastic MHD equations.

\medskip 
The aim of this paper is to  understand  the non-uniqueness problem for the 3D hyper-viscous  stochastic NSE, particularly, in the high viscosity regime beyond the Lions exponent. 
We prove that the non-uniqueness phenomenon still exhibits in this high viscous regime. More precisely, 
we address the following two problems:  
\begin{enumerate}[(i)]
   \item Construction of non-unique   probabilistically strong and  analytically weak  solutions to the stochastic hyper-viscous   NSE \eqref{equa-SNSE-High} with the viscosity above the Lions exponent $ 5/4$.
   
   \item Vanishing noise limit 
   of the constructed stochastic solutions, 
   in relation to the convex integration solutions constructed for the deterministic NSE. 
\end{enumerate}

The main results of the present work are formulated in the following.

\subsection{Formulation of main results} 

Let us first present the definition of solutions taken in the probabilistically strong and analytically
weak sense.
\begin{definition}  \label{Def-Sol}
 Let $T\in [0, \infty)$. 
 We say that $ u$ is a probabilistically strong
	and analytically weak solution to the stochastic hyper-viscous NSE \eqref{equa-SNSE-High} with the initial datum $u_{0} \in L_{x}^{2}$,  if $  \textbf{P}-a.s.$ the following holds:
	\begin{enumerate}[(i)]
	\item $u  \in  L^{2}([0, T]; L_{x}^{2})$ ;
	\item for all $t\in [0, T]$, $u(t, \cdot)$ is divergence-free and has zero spatial mean;
	\item for any divergence-free test function $\phi\in C_{0}^{\infty}(\T^{3})$ and any $t\in [0, T]$,
	\begin{align*}
		&\langle u(t),\phi\rangle = \langle u_{0} ,\phi\rangle  +\int_{0}^{t}\langle  -\nu(-\Delta)^{\a}u(s), \phi\rangle -\langle (u(s)\cdot\nabla)u(s),\phi\rangle   \mathrm{d}s
	+ \langle W(t), \phi \rangle.
	\end{align*} 	
\end{enumerate}
\end{definition}

We note that  the hyper-viscous deterministic NSE has the scaling-invariant space $L_{t}^{\gamma}W_{x}^{s,p}$ with
 \begin{align}\label{LPS-1}
 	\frac{2\a}{\gamma} + \frac{3}{p} = 2\a-1+s,
 \end{align}
 under the scaling 
\begin{equation}
	u(t,x)\mapsto \lambda^{2\a-1} u(\lambda^{2\a}t, \lambda x),
	\quad P(t,x)\mapsto \lambda^{4\a-2} P(\lambda^{2\a}t, \lambda x).
\end{equation}
In particular, in the case $s=0$, the mixed Lebesgue space $L_{t}^{\gamma}L_{x}^{p}$ is scaling-critical for the  classical NSE, and \eqref{LPS-1} reduces exactly to
 the celebrated LPS criterion
\begin{align}\label{LPS-c}
	\frac{2}{\gamma} + \frac{3}{p} = 1.
\end{align}

In the present work, 
we consider  the following two   scaling-supercritical  regimes 
\begin{align}
	&\mathcal{S}_{1}:=\left\{(s,\gamma,p)\in [0, 3)\times[1, \infty]\times[1, \infty]:0\leq s< \frac{4\a-5}{\gamma}+\frac{3}{p}+1-2\a\right\},\ \ \alpha\in [\frac 54, 2), \label{S1}\\
	&\mathcal{S}_{2}:=\left\{(s,\gamma,p)\in [0, 3)\times[1, \infty]\times[1, \infty]:0\leq s<\frac{ 2\a}{\gamma}+\frac{2\a-2}{p}+1-2\a\right\},\ \ \alpha\in [1, 2). \label{S2}
\end{align} 
Note that the borderlines of $\mathcal{S}_{1}$ and $\mathcal{S}_{2}$ contain two endpoints of the scaling condition \eqref{LPS-1}.   See Figure \ref{fig:my_label} below.

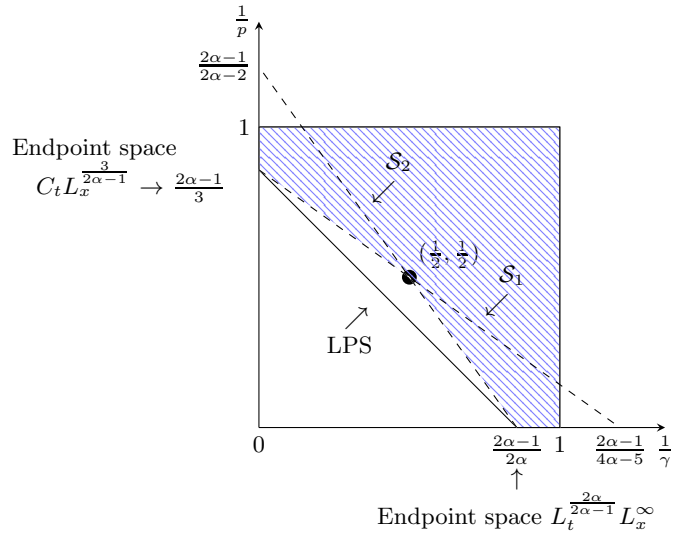
\begin{figure}[H]\label{fig}
		\centering
		\begin{tikzpicture}[scale=4,>=stealth,align=center]
			\small
			
			
			\draw[->] (0,0) --node[pos=1,left] {$\frac{1}{p}$} node[pos=0,below]{$0$} (0,1.35);
			\draw[->] (0,0) --node[pos=1,below] {$\frac{1}{\gamma}$} (1.35,0);
			
			
			\coordinate [label=below:$1$] (gamma1) at (1,0);
			\coordinate [label=left:$1$] (p1) at (0,1);
			\coordinate (11) at (1,1);

			\coordinate [label=left:\text{Endpoint space\quad\quad\quad }\\ $C_{t}L_{x}^{\frac{3}{2\alpha-1}}$ $\rightarrow$ $\frac{2\alpha -1}{3} $
			] (A) at (0,6/7); 
			\coordinate [label=below:$\frac{2\alpha-1}{2\alpha}$ \\ $\uparrow$ \\
			\text{Endpoint space $L_t^{\frac{2\alpha}{2\alpha-1}}L_x^{\infty}$}] (B) at (6/7,0);
			\draw (A) --node[pos=0.5,below left]{$\;\nearrow$\\LPS\;} (B);
			\draw (p1) -- (11);
			\draw (11) -- (gamma1);
			\coordinate [label=left:$ \frac{2\alpha-1}{2\alpha-2} $] (B1) at (0,1.2);
			
			\coordinate (C1) at (0.5,0.5);
			\fill (C1) circle [radius=0.7pt];
			\node[above right] at (C1) {$(\frac{1}{2}, \frac{1}{2})$};
			\coordinate [label=below:$ \frac{2\alpha-1}{4\alpha-5} $] (A1) at (1.2,0);
			
			
			\draw[white, pattern =north west lines, pattern color = blue!50] (A) -- (0,1) -- (11) -- (gamma1) -- (B) -- (1/2,1/2) -- cycle;
			\draw (A) -- (0,1) -- (11) -- (gamma1) -- (B) ;
			\draw[dashed, name path = l4] (B) --node[pos=0.6,above right] { \; $\mathcal{S}_{2}$\\ $\swarrow$ \;} (B1);
			
			\draw[dashed, name path = l4] (A) --node[pos=0.6,above right] { \; $\mathcal{S}_{1}$\\ $\swarrow$ \;} (A1);
			
		\end{tikzpicture}
		\caption{The case $\alpha \in [\frac{5}{4},2), s=0$.}
		\label{fig:my_label}
	\end{figure}

 Since  solutions to stochastic NSE conserve the mean,  for the sake of simplicity, we mainly focus on the case where  initial data are mean-free.  We use $L_{\sigma}^{2}$ below to denote the subspace of all divergence-free and mean-free functions in $L_{x}^{2}$.

The main result of this paper is fomulated as follows, which  provides the first example of  probabilistically strong and non-Leray-Hopf solutions to the  stochastic hyper-viscous NSE \eqref{equa-SNSE-High} with  $\a\geq 5/4$.

\begin{theorem}   \label{Thm-Nonuniq-Hyper}
Let $ \widetilde{u}\in C_{0}^{\infty}([0,T]\times\T^{3})$ be any 
{deterministic} divergence-free and mean-free smooth vector fields with $ \widetilde{u}(0)=0$.
 Then, for any  $r\geq 1$, $\epsilon>0 $, for any $(s,p,\gamma)\in \mathcal{S}_{1}$ when $\a\in [\frac{5}{4},2)$ or  $(s,p,\gamma)\in \mathcal{S}_{2}$ when $\a\in [1,2)$,  and  for any  deterministic initial condition $ u_{0} \in L_{\sigma}^{2} $, there exists a solution $ u$ to (\ref{equa-SNSE-High}),
in the sense of Definition \ref{Def-Sol},
such that
	\begin{equation}\label{1.4}
		u-z\in \Lo^{2r}_{\om} L_{t}^{2}L_{x}^{2}\cap\Lo^{r}_{\om} L_{t}^{1}L_{x}^{2}\cap\Lo^{r}_{\om}L_{t}^{\ga}W_{x}^{s,p}.
	\end{equation}
	Moreover,  $ u $ is close to $ \widetilde{u}+z:$
	\begin{equation}\label{1.5}
		\|u-(\widetilde{u}+z)\|_{ \Lo^{r}_{\om} L_{t}^{1}L_{x}^{2}}+	\|u-(\widetilde{u}+z)\|_{\Lo^{r}_{\om}L_{t}^{\ga}W_{x}^{s,p}}<\epsilon.
	\end{equation}
\end{theorem}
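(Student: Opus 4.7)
The plan is to carry out a pathwise convex integration scheme for a suitably shifted random PDE, and then close stochastic moment bounds to recover the desired integrability in $\omega$. First, introduce the stationary Ornstein--Uhlenbeck type process $z$ solving the linear equation $\mathrm{d}z+\nu(-\Delta)^{\a}z\,\mathrm{d}t=\mathrm{d}W_{t}$ with $z(0)=0$ (or any convenient initial value that will be reconciled with $u_{0}$ below). Because $G$ maps into $H_{\s}^{4}$, $z$ is almost surely smooth in space with good moment bounds in every $L^{r}_{\om}C_{t}H^{\sigma}_{x}$ for $\sigma<4$. Setting $v:=u-z$, the unknown $v$ satisfies the pathwise random equation
\begin{equation*}
\partial_{t}v+\nu(-\Delta)^{\a}v+\div\bigl((v+z)\otimes(v+z)\bigr)+\nabla P=0,\qquad \div v=0,\qquad v(0)=u_{0},
\end{equation*}
which is a deterministic hyper-viscous Navier--Stokes--type system with forcing by $z$. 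All randomness is now absorbed in the coefficients, and we may run convex integration $\omega$ by $\omega$.

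Next, I would set up the iteration $(v_{q},\mathring{R}_{q})_{q\geq 0}$ solving the associated Reynolds system
\begin{equation*}
\partial_{t}v_{q}+\nu(-\Delta)^{\a}v_{q}+\div\bigl((v_{q}+z)\otimes(v_{q}+z)\bigr)+\nabla P_{q}=\div \mathring{R}_{q},
\end{equation*}
with the standard decay $\|\mathring{R}_{q}\|_{L^{1}_{x}}\lesssim\dqq$ and growth $\|v_{q}\|_{L^{2}_{x}}$, $\|v_{q}\|_{W^{1,\infty}_{x}}$ controlled by the parameters $\la$, $\dq$ chosen as geometric sequences. The perturbation $w_{q+1}=v_{q+1}-v_{q}$ is built from intermittent building blocks concentrated at frequency $\laq$: for the regime $\mathcal{S}_{1}$ I would use intermittent jets as in \cite{CL-NSE1,LQZZ-NSE}, which are well suited to reach the endpoint $L^{\ga}_{t}L^{\infty}_{x}$; for the regime $\mathcal{S}_{2}$ I would use spatially intermittent Mikado-type flows (or their hyper-viscous refinements from \cite{LQZZ-NSE,ZZL-MHD}) designed to reach the endpoint $C_{t}L^{3/(2\a-1)}_{x}$. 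In both cases, the scaling condition \eqref{LPS-1} and the size of the supercritical gap translate into inequalities between the intermittency parameters and $\la$, $\dq$ that must be verified by a careful parameter choice, and the hyper-viscous dissipation $\nu\laq^{2\a}$ has to be absorbed by a sufficient frequency gap and by introducing a dissipative corrector. The prescribed data $\widetilde u$ is incorporated by adding it to the starting iterate $v_{0}$, together with a cut-off that preserves $v_{0}(0)=u_{0}$; one must also add a mean-zero/divergence-free corrector so that $\mathring R_{0}$ captures exactly the error of $\widetilde u+z$ plus initial-layer terms.

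To pass from pathwise estimates to the stochastic moment bounds (\ref{1.4})--(\ref{1.5}), I would freeze the construction up to a stopping time $\tau_{L}$ on which $\|z\|_{C_{t}H^{\sigma}_{x}}\leq L$, perform the convex integration deterministically with constants depending only on $L$, and then take $L\to\infty$ on the event $\{\tau_{L}=T\}$ whose complement has probability decaying faster than any polynomial in $L$. Choosing $L=L(\omega)$ via a slowly growing tail function and using the Gaussian concentration of $z$, the Minkowski-type estimate
\begin{equation*}
\|v_{q+1}-v_{q}\|_{\Lo^{r}_{\om}L^{\ga}_{t}W^{s,p}_{x}}\lesssim \dqq^{\theta}\,(1+\mathbb{E}\|z\|_{C_{t}H^{\sigma}_{x}}^{Cr})^{1/r},
\end{equation*}
summed geometrically in $q$, yields the required $\Lo^{r}_{\om}$ integrability with smallness. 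Finally, $(s,\ga,p)$ being strictly below the LPS curve gives a positive gap that is spent as $\dqq^{\theta}$, which both provides convergence of $v_{q}$ in the supercritical space and the smallness bound (\ref{1.5}).

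The main obstacle will be the trade-off between the hyper-viscous dissipation at frequency $\laq^{2\a}$ (which becomes very strong for $\a$ close to $2$) and the intermittency needed to reach supercritical exponents beyond the Lions threshold; making the building blocks simultaneously solve the geometric identities, satisfy the Reynolds--stress cancellation at the intermittency scale, and leave enough room for the $\nu(-\Delta)^{\a}w_{q+1}$ correction term to be absorbed into $\mathring R_{q+1}$ is exactly where the two regimes $\mathcal{S}_{1}$ and $\mathcal{S}_{2}$ dictate different choices of building blocks. Bookkeeping for the stochastic moments through the iteration, while keeping the energy identity at the initial time $u(0)=u_{0}$, is a delicate but by-now-standard adjustment following \cite{HZZ-3,Cheskidov-Zeng-Zhang}.
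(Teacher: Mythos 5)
Your overall architecture (shift by the linear stochastic process, run a convex integration scheme for the shifted Reynolds system with intermittent jets in $\mathcal{S}_{1}$ and Mikado-type flows in $\mathcal{S}_{2}$, sum the telescoping series in the supercritical norms) is the same as the paper's, but the step where you pass from pathwise estimates to the moment bounds \eqref{1.4}--\eqref{1.5} has a genuine gap. Running the scheme ``$\omega$ by $\omega$'' up to stopping times $\tau_{L}$ and then letting $L\to\infty$, or choosing $L=L(\omega)$ through a tail function of $\|z\|_{C_{t}H^{\sigma}_{x}}$, does not produce a \emph{probabilistically strong} solution: the event $\{\tau_{L}=T\}$ and the random parameter $L(\omega)$ are $\mathcal{F}_{T}$-measurable, so the resulting velocity field is not $(\mathcal{F}_{t})$-adapted, which is precisely what Definition \ref{Def-Sol} and the theorem require; moreover the constructions for different $L$ have different convex-integration parameters and hence different limits, so there is no single object to ``take $L\to\infty$'' on. The paper avoids this entirely by making the iteration itself stochastic: the inductive estimates \eqref{2-14}--\eqref{2-16} are $\Lo^{m}_{\om}$-moment bounds for \emph{every} $m\geq 1$, with constants of the form $(8mL^{2}50^{q})^{50^{q}}$ tracked explicitly and absorbed by the choice of $a$ in \eqref{2-7}, and all amplitudes $a_{(k)}$ (built from the mollified, frequency-truncated, adapted stress and noise) remain $(\F_t)$-adapted. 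Reproducing the theorem requires this kind of expectation-inside-the-iteration bookkeeping (as in \cite{HZZ-3,CLZ,Cheskidov-Zeng-Zhang}), not a pathwise argument with random cutoff of the noise norm.

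A second, smaller but real gap is the treatment of the rough initial datum. You keep $v(0)=u_{0}$ with $u_{0}\in L^{2}_{\sigma}$ and propose a cut-off ``preserving $v_{0}(0)=u_{0}$''; but the iterates of a convex integration scheme are smooth (they must be, after mollification), so they cannot equal a general $L^{2}_{\sigma}$ datum at $t=0$, and the corresponding $\mathring R_{0}$ would not even be controlled. The paper instead transfers $u_{0}$ into the shift, $z(0)=u_{0}$ via $z^{u}(t)=e^{t(-\nu(-\Delta)^{\a}-I)}u_{0}$ in \eqref{2-1}--\eqref{2-4}, so that $v_{q}(0)=0$ for all $q$, switches the perturbations off near $t=0$ with $\Theta_{q+1}$ in \eqref{3-49}, and then pays for this by a Reynolds stress whose uniform-in-time norm degenerates like $\varsigma_{q}^{-1}$ near the initial time, which forces the split of the decay estimate \eqref{2-16} into the regimes $[0,\varsigma_{q}/2]$ and $(\varsigma_{q}/2,T]$ in Sections \ref{Sec-Reynolds} and \ref{Sec-Reynolds 2}. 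Your sketch does not confront either the smoothness obstruction at $t=0$ or the resulting initial-layer blow-up. Finally, a minor slip: your assignment of building blocks to regimes (jets for $\mathcal{S}_{1}$, Mikado for $\mathcal{S}_{2}$) agrees with the paper, but the endpoints you attach to them are swapped — $\mathcal{S}_{1}$ borders $C_{t}L_{x}^{3/(2\a-1)}$ (strong spatial intermittency plus the extra temporal oscillation needed for $\a>5/4$), while $\mathcal{S}_{2}$ borders $L_{t}^{2\a/(2\a-1)}L_{x}^{\infty}$ (near-homogeneous in space, strongly intermittent in time).
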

As a consequence, we have the non-uniqueness of probabilistically strong solutions, 
which are non-Leray-Hopf, to the stochastic hyper-viscous NSE even in the viscosity regime above the Lions exponent. 

\begin{corollary}\label{coro}
    For any deterministic initial data in $L_{\sigma}^{2}$, there exist infinitely many non-Leray-Hopf solutions to \eqref{equa-SNSE-High} in the space $L_{t}^{\gamma}W_{x}^{s,p}$ where $(s,\gamma,p)\in  \mathcal{S}_{1}\cup \mathcal{S}_{2}$. 
\end{corollary}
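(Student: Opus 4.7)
The corollary follows directly from Theorem \ref{Thm-Nonuniq-Hyper} combined with the Lions-type uniqueness of stochastic Leray-Hopf solutions in the hyper-viscous regime $\alpha \geq 5/4$. The plan is to exploit the freedom to prescribe the smooth profile $\widetilde{u}$ in order to manufacture infinitely many distinct probabilistically strong solutions sharing the same initial datum, and then invoke uniqueness within the Leray-Hopf class to conclude that all but at most one of them must fail to be Leray-Hopf.

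First, fix $u_{0}\in L_{\sigma}^{2}$, exponents $(s,\gamma,p)\in \mathcal{S}_{1}\cup \mathcal{S}_{2}$, and set $r=1$, $\epsilon=1/2$. I would choose a countable family $\{\widetilde{u}_{k}\}_{k\in\mathbb{N}}\subset C_{0}^{\infty}([0,T]\times \T^{3})$ of divergence-free, mean-free smooth vector fields with $\widetilde{u}_{k}(0)=0$ and pairwise separation $\|\widetilde{u}_{j}-\widetilde{u}_{k}\|_{L_{t}^{\gamma}W_{x}^{s,p}}\geq 2$ for $j\neq k$; this is easily arranged, for example by taking smooth spatial profiles supported on disjoint Fourier modes together with a common temporal bump and suitably scaled amplitudes. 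Applying Theorem \ref{Thm-Nonuniq-Hyper} to each $\widetilde{u}_{k}$ with this choice of parameters produces a probabilistically strong solution $u_{k}$ of \eqref{equa-SNSE-High} with initial datum $u_{0}$, satisfying $\|u_{k}-(\widetilde{u}_{k}+z)\|_{L_{\om}^{1}L_{t}^{\gamma}W_{x}^{s,p}}<1/2$. Since the additive stochastic shift $z$ is common to each construction and $\widetilde{u}_{j}-\widetilde{u}_{k}$ is deterministic, the triangle inequality gives
\begin{align*}
\|u_{j}-u_{k}\|_{L_{\om}^{1}L_{t}^{\gamma}W_{x}^{s,p}}\ \geq\ \|\widetilde{u}_{j}-\widetilde{u}_{k}\|_{L_{t}^{\gamma}W_{x}^{s,p}}\ -\ 1\ \geq\ 1,\qquad j\neq k,
\end{align*}
so $\{u_{k}\}_{k\in\mathbb{N}}$ is a collection of pairwise distinct probabilistically strong solutions, all issuing from $u_{0}$.

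Second, I would invoke the stochastic Lions uniqueness recalled in the introduction following \cite{BJLZ23}: in the hyper-viscous regime $\alpha\geq 5/4$, the energy inequality together with the $L^{2}_{t}\dot{H}_{x}^{\alpha}$ control forces any two stochastic Leray-Hopf solutions sharing the same deterministic initial datum to coincide. Consequently, at most one element of $\{u_{k}\}$ can belong to the Leray-Hopf class, so all but at most one furnish non-Leray-Hopf solutions in $L_{t}^{\gamma}W_{x}^{s,p}$ with $(s,\gamma,p)\in \mathcal{S}_{1}\cup \mathcal{S}_{2}$, yielding the claimed infinite family.

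The only (minor) technical point is ensuring that the deterministic separation of the profiles $\widetilde{u}_{k}$ propagates to a genuine separation of the stochastic solutions $u_{k}$; this is automatic because $z$ cancels in the differences $u_{j}-u_{k}$ and the convex-integration perturbation is controlled uniformly by $\epsilon=1/2$. Thus there is no serious obstacle beyond Theorem \ref{Thm-Nonuniq-Hyper} itself, whose proof via the stochastic convex-integration scheme constitutes the substantive content of the paper.
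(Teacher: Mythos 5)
Your first step is essentially the paper's: separate the profiles $\widetilde{u}_{k}$ in $L_{t}^{\gamma}W_{x}^{s,p}$, apply Theorem \ref{Thm-Nonuniq-Hyper} with a fixed small $\epsilon$, and use the closeness estimate \eqref{1.5} plus the triangle inequality (the common shift $z$ cancels) to get infinitely many pairwise distinct probabilistically strong solutions with the same datum. The gap is in the second step, where you rule out the Leray--Hopf property by invoking uniqueness within the Leray--Hopf class. That uniqueness, from \cite{BJLZ23}, is available only in the Lions regime $\alpha\geq 5/4$. The corollary, however, is stated for $(s,\gamma,p)\in\mathcal{S}_{1}\cup\mathcal{S}_{2}$, and $\mathcal{S}_{2}$ in \eqref{S2} covers all $\alpha\in[1,2)$; for $\alpha\in[1,5/4)$ (in particular the classical NSE, $\alpha=1$) uniqueness of (stochastic) Leray--Hopf solutions is a well-known open problem, so your conclusion that ``at most one $u_{k}$ can be Leray--Hopf'' has no basis there, and your argument gives no information on how many of the $u_{k}$ fail to be Leray--Hopf. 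Even where it does apply ($\alpha\geq 5/4$) it only yields ``all but at most one'', which is weaker than what the paper proves, namely that every constructed solution is non-Leray--Hopf.

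The paper's route avoids uniqueness altogether and works uniformly in $\alpha\in[1,2)$: besides the pairwise separation, the profiles are chosen with $\|\widetilde{u}_{i}\|_{L_{t}^{1}L_{x}^{2}}$ large, as in \eqref{2.20}. Any Leray--Hopf solution satisfies the energy inequality \eqref{1-12}, hence the a priori bound \eqref{1-10-1} on its $L_{\omega}^{2}L_{t}^{1}L_{x}^{2}$ norm; on the other hand, the closeness estimate \eqref{1.5} in the $L_{\omega}^{r}L_{t}^{1}L_{x}^{2}$ component (which your proof does not use), applied with $r$ large enough that the $L_{\omega}^{2}$ moment is controlled, together with the moment bounds on $z$ from Proposition \ref{Proposition stochastic}, shows that $\|u_{i}\|_{L_{\omega}^{2}L_{t}^{1}L_{x}^{2}}$ strictly exceeds that bound, so no $u_{i}$ can be Leray--Hopf. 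To repair your proof you would either have to restrict the corollary to $\alpha\geq 5/4$ or add such a largeness condition on the $\widetilde{u}_{k}$ and run this energy comparison.
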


\begin{remark}
(i). 
To the best of our knowledge, 
Theorem \ref{Thm-Nonuniq-Hyper} and Corollary \ref{coro} provide the first examples of non-Leray-Hopf solutions 
to the 3-D stochastic NSE in the high 
viscosity regime beyond the Lions exponent. 
Actually, 
 when $\alpha\geq 5/4$, 
it was proved in \cite{BJLZ23} that there exist unique Leray-Hopf solutions to \eqref{equa-SNSE-High} in the class $C_tL^2_x \cap L^2_t H^1_x$. 
In contrast, 
Theorem \ref{Thm-Nonuniq-Hyper} shows that, 
outside the Leray-Hopf class, 
there indeed exist infinitely many different 
non-Leray-Hopf solutions. 

It should be mentioned that 
contructing non-Leray-Hopf solutions 
in the high viscosity regime  
 is indeed quite hard. 
Our proof utilizes building blocks in the 
convex integration schemes 
which feature both the spatial and temporal 
intermittencies that are strong enough 
to control the high viscosity even beyond the Lions exponent. 
More detailed explanations can be found in 
Subsubsection \ref{Subsub-proof} below. 

(ii). We note that the boundaries of $\mathcal{S}_{1}$ and $\mathcal{S}_{2}$ contain two endpoints of the LPS condition. 
Hence, Corollary  \ref{coro} shows that 
the uniqueness fails 
in any LPS supercritical space close to the 
two endpoint spaces. 

(iii). 
The non-uniqueness in Corollary  \ref{coro} is sharp near the LPS endpoint 
space $C_tL^2_x$ when $\alpha=5/4$.  
This can be seen from the following uniqueness result, which 
resembles the classical LPS criterion 
and can be proved from Theorem 1.5 of 
\cite{CLZ} when the magnetic field vanishes: 

\medskip 
\paragraph{\bf Uniqueness:}
Let $(s,\gamma,p)$ satisfy  (\ref{LPS-1}) with $\a\geq 1$, $2\leq \gamma\leq \infty$, $1\leq p\leq \infty$, $s\geq 0$ and $0\leq \frac{1}{p}-\frac{s}{3}\leq \frac{1}{2}$. 
Let $u\in L^{\gamma}(0,T;W_{x}^{s,p})$ 
if $1\leq \gamma \leq \infty$, 
or $u\in C([0,T];W_{x}^{s,p})$ 
if $\gamma =\infty$, $\textbf{P}-a.s.$ 
If $u$ is a weak solution to (\ref{equa-SNSE-High}), then $u\in  C_{w}([0,T];L_{x}^{2})\cap L^{2}(0,T; \dot{H}_{x}^{\a})$ and  $u$ is unique, $\textbf{P}-a.s.$

\end{remark}

\medskip
Our next result is to provide a relationship, via the vanishing noise limit, between stochastic and deterministic solutions to the hyper-viscous NSE constructed by convex integrations.

Let $\mathscr{A}_{NSE}^{\widetilde{\beta}}$, $\wt \beta>0$, denote the set of all mean-free {and divergence-free} solutions $u\in H_{t,x}^{\widetilde{\beta}}$ to the deterministic NSE
	\begin{eqnarray}\label{deterministic-NSE}
	\left\{\begin{array}{lc}
	\p_{t}u+\nu(-\Delta)^{\a}u+(u\cdot \nabla)u+\nabla P =0,\\
		\div u=0,\\
		u(0)=u_{0},
	\end{array}\right.
\end{eqnarray}
where $u_{0}\in L_{\sigma}^{2}$, $\a\in[1,2)$. Let $\mathscr{S}^{\wt \beta}_{SNSE,r}$  denote the accumulation points in the space $\Lo^{r}_{\om}L_{t}^{2}L_{x}^{2}$ of global solutions $u^{(\epsilon_{n})}$
to the stochastic hyper-viscous NSE 
(\ref{equa-SNSE-High}), where the noise $W$ is  replaced by   $\epsilon_{n}W$ with $\epsilon_{n}\to 0$.

\begin{theorem}\label{Theorem Vanishing noise} 
	Let $\alpha\in [1,2)$, $r\geq1$ and $\wt \beta\in (0,1)$.
Then, we have
	\begin{equation}
		 \mathscr{A}_{NSE}^{\widetilde{\beta}}\subseteq  \mathscr{S}^{\wt \beta}_{SNSE,2r}.
    \end{equation}
	That is, for any mean-free weak solution $u\in H_{t,x}^{\widetilde{\beta}}$ to the deterministic NSE (\ref{deterministic-NSE}), there exists
	a sequence of weak solutions $\{u^{(\epsilon_{n})}\}_{{n}}$ to the stochastic hyper-viscous NSE  (\ref{equa-SNSE-High}) with $\epsilon_{n}W $ replacing $W $,
	such that
	\begin{equation}\label{1.3.3}
		u^{(\epsilon_{n})}\rightarrow u \quad\text{strongly in } \ \
          \Lo^{2r}_{\om}L_{t}^{2}L_{x}^{2} 
		\ \ as\ \epsilon_{n} \to 0.
	\end{equation}
\end{theorem}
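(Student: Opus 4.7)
The plan is to realize the deterministic solution $u \in \mathscr{A}_{NSE}^{\widetilde{\beta}}$ as a vanishing-noise limit by invoking Theorem \ref{Thm-Nonuniq-Hyper} with the noise $W$ rescaled to $\epsilon_n W$ and with the smooth target $\widetilde u$ chosen as a regularization of $u$ itself. Writing the stochastic solution as $u^{(\epsilon_n)} = v^{(\epsilon_n)} + \epsilon_n z^{(1)}$ in the Da Prato--Debussche form, where $z^{(1)}$ denotes the stochastic Stokes solution associated with unit noise, all stochastic contributions scale linearly in $\epsilon_n$ and vanish in the limit, so that the residual error driving the equation for $v^{(\epsilon_n)}$ relative to the deterministic NSE is of order $\epsilon_n$.

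First, I would approximate $u \in H^{\widetilde\beta}_{t,x}$ by a sequence of smooth divergence-free mean-free vector fields $\widetilde u_k \in C_0^\infty([0,T]\times \mathbb{T}^3)$ with $\widetilde u_k(0)=0$ and $\|\widetilde u_k - u\|_{L^2_tL^2_x}\to 0$, obtained by standard mollification together with a short initial-time cutoff whose $L^2_tL^2_x$ cost is $o(1)$. Second, for each $n$ I would apply Theorem \ref{Thm-Nonuniq-Hyper} with the above $\widetilde u_{k(n)}$, the noise replaced by $\epsilon_n W$, the integrability $r$ replaced by a sufficiently large $\widetilde r \ge 2r$, and tolerance $\epsilon = \epsilon_n$, producing a stochastic solution $u^{(\epsilon_n)}$ to the rescaled SNSE which is $\epsilon_n$-close to $\widetilde u_{k(n)} + \epsilon_n z^{(1)}$ in $L^{\widetilde r}_\omega L^1_tL^2_x$. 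Third, I would upgrade this $L^1$-in-time closeness to $L^{2r}_\omega L^2_tL^2_x$ via the elementary inequality
\[
\|f\|_{L^2_tL^2_x}^2 \le \|f\|_{C_tL^2_x}\,\|f\|_{L^1_tL^2_x},
\]
combined with a uniform-in-$n$ bound for $u^{(\epsilon_n)} - \epsilon_n z^{(1)}$ in $L^{\widetilde r}_\omega C_tL^2_x$ built into the convex integration iteration, followed by Cauchy--Schwarz in $\omega$. Together with $\epsilon_n z^{(1)} \to 0$ in $L^{2r}_\omega L^2_tL^2_x$ and the triangle inequality, a diagonal selection $k=k(n)$ then yields $u^{(\epsilon_n)} \to u$ in $L^{2r}_\omega L^2_tL^2_x$.

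The main obstacle is extracting from the convex integration scheme underlying Theorem \ref{Thm-Nonuniq-Hyper} a uniform-in-$n$ bound on $u^{(\epsilon_n)}-\epsilon_n z^{(1)}$ in $L^{\widetilde r}_\omega C_tL^2_x$ that is insensitive to the tolerance $\epsilon_n$. Such bounds are standard for Buckmaster--Vicol-type intermittent jet schemes once the energy increments $\delta_{q+1}^{1/2}$ are chosen summable in $q$, but they must be verified in the present high-viscosity stochastic framework, where the random source $\epsilon_n z^{(1)}$ enters the Reynolds stress through the inductive iteration and interacts with building blocks designed to overcome the dissipation beyond the Lions exponent. A secondary issue is the requirement $\widetilde u(0)=0$ in Theorem \ref{Thm-Nonuniq-Hyper}, which is incompatible with $u(0)=u_0$ when $u_0 \neq 0$; this is handled by the initial-layer cutoff, which perturbs the construction only on a vanishingly short time interval and leaves the $L^2_tL^2_x$ convergence intact.
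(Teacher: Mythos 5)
Your route has a genuine gap at exactly the point you flag as the ``main obstacle,'' and it is not a routine verification but a failure of the method as stated. Theorem \ref{Thm-Nonuniq-Hyper} only gives closeness of $u^{(\epsilon_n)}$ to $\widetilde u+z$ in $\Lo^{r}_{\om}L^{1}_{t}L^{2}_{x}\cap \Lo^{r}_{\om}L^{\gamma}_{t}W^{s,p}_{x}$ (estimate \eqref{1.5}); it gives no smallness and no uniform bound in $\Lo^{2r}_{\om}L^{2}_{t}L^{2}_{x}$, and your interpolation upgrade $\|f\|_{L^2_{t}L^2_x}^2\le \|f\|_{C_tL^2_x}\|f\|_{L^1_tL^2_x}$ requires a tolerance-independent $C_tL^2_x$ bound on $u^{(\epsilon_n)}-z$ that this construction does not provide and cannot be expected to provide: the perturbations are temporally intermittent, with $\|g_{(k)}\|_{C_t}\sim\tau^{1/2}$, so $\|w_{q+1}\|_{C_tL^2_x}$ grows without bound in $q$ (consistently with the inductive $C_{t,x}$ bounds \eqref{2-14}, which blow up as $q\to\infty$), the limit is only shown to lie in $L^2_tL^2_x$ (Definition \ref{Def-Sol}(i)), and a continuous-energy construction is a genuinely different and harder scheme (cf.\ \cite{Cheskidov-Zeng-Zhang}). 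In this paper's iteration the one increment that is \emph{not} small is precisely the $L^{2r}_{\om}L^2_tL^2_x$ one: \eqref{2-17} is only $\lesssim\delta_{q+1}^{1/2}$, and $\delta_1$ is chosen large, which is why \eqref{1.5} is stated only in the weaker norms. A secondary bookkeeping error: the shift $z$ of \eqref{2-2} contains $z^{u}(t)=e^{t(-\nu(-\Delta)^{\a}-I)}u_0$, which is deterministic, independent of $\epsilon_n$, and does not vanish in the limit; with $\widetilde u_k\to u$ your scheme would target $u+z^u$ rather than $u$, so you would at least have to approximate $u-z^u$ (which, conveniently, vanishes at $t=0$) instead of $u$.

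The paper avoids both issues by not using Theorem \ref{Thm-Nonuniq-Hyper} as a black box. It mollifies $u$ at scale $\lambda_n^{-1}$, sets $v_n=u_n-z_n^{u}$ with an explicit Reynolds stress \eqref{1.3.56}, takes $\epsilon_n=\lambda_n^{-1}$ so that the noise-induced errors are small, verifies the inductive estimates \eqref{2-14}--\eqref{2-16} at level $q=n$ (Lemma \ref{Lemma 2.4}), and then runs Proposition \ref{Proposition Main iteration} starting from that high level. The total perturbation added by the convex integration is then controlled in $\Lo^{2r}_{\om}L^{2}_{t}L^{2}_{x}$ by $\sum_{q\ge n}\delta_{q+1}^{1/2}\to 0$, and together with the mollification errors and $\|Z^{(\epsilon_n)}\|\lesssim\epsilon_n$ this yields \eqref{1.3.3} without ever needing a $C_tL^2_x$ bound. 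If you want to salvage your approach, the fix is essentially to re-derive this: you must enter the iteration at a level depending on $n$ with the deterministic solution as starting datum, rather than invoke the finished Theorem \ref{Thm-Nonuniq-Hyper}.
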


\begin{remark}
Theorem \ref{Theorem Vanishing noise} shows that the set of accumulation points for the stochastic hyper-viscous NSE includes all solutions to the  deterministic NSE  in the class $H_{t,x}^{\widetilde{\beta}}$.   
We note that the deterministic solutions to NSE constructed by 
Buckmaster-Vicol \citep{BV-NS}  
are  in the space $H_{t,x}^{\widetilde{\beta}}$ (see \citep[(6.51)]{ZZL-MHD}), and  
so are the solutions to the hyper-viscous NSE
constructed in \cite{LQZZ-NSE} 
by Li, Qu and the last two named authors. 
As a consequence,  
by Theorem  \ref{Theorem Vanishing noise}, 
both solutions   in \cite{BV-NS}  
and 
\cite{LQZZ-NSE} 
can be approximated by stochastic solutions constructed in Theorem \ref{Thm-Nonuniq-Hyper} as the noise strength tends to zero. 
\end{remark}

\subsection{Strategy of the proof} 
\label{Subsub-proof}

Our proof is mainly based on intermittent convex integrations. The solutions in Theorems \ref{Thm-Nonuniq-Hyper} and \ref{Theorem Vanishing noise} are constructed in an interative way to satisfy the relaxed hyper-viscous Reynolds-NSE system (see {\eqref{2-12}} below). The crucial  quantitative control of the  velocity fields and Reynolds stresses are given in the main interation result, i.e., Proposition \ref{Proposition Main iteration} below.

\medskip  
It should be mentioned  that the building blocks  in the proof have both the temporal and spatial 
intermittencies which are exploited in an optimal way in order to control the high viscosity beyond the Lions exponent. 

As a matter of fact, 
the high dissipation  is a major obstacle for the  ill-posedness of (stochastic) PDEs. In \citep{BV-NS}, this problem is overcomed by introducing the intermittency in the building blocks of convex integration schemes. The situation becomes even more difficult when the viscosity is beyond the Lions exponent, because, due to the classical results by Lions \cite{Lions}, NSE is well-posed in $C_{t}L_{x}^{2}\cap L_{t}^{2}H_{x}^{1}$ in this high viscous regime. It then requires to construct building blocks with  intermittency strong enough  in the high viscous regime.

More delicately, different endpoint spaces  of the LPS criterion require different constructions of building blocks in convex integrations.   

To be precise,  
in the endpoint space $L_t^{\infty}L_x^{\frac{3}{2\a-1}}$, the building blocks shall be almost temporal homogeneous but strong spatial intermittent. We use the intermittent jets initially constructed in  {\citep{BCV}}. As in \citep{BCV}, the intermittent jets used here have almost 3D spatial intermittency and can   control  the dissipativity term $(-\Delta)^{\a}$ when $\a \leq 5/4$, . 
But this spatial intermittency is not strong enough to 
control the high viscosity $(-\Delta)^{\a}$ beyond the Lions exponent, i.e. $\a>5/4$.  
For this purpose, we also use the temporal concentration functions as in \citep{CL-NSE1}
 to oscillate the building blocks in time in order to gain the  extra temporal intermittency.  Moreover,   managing the endpoint case $L_t^{\infty}L_x^{\frac{3}{2\alpha-1}}$ requires us to exploit the temporal intermittency in an almost optimal  way. By carefully selecting parameters in \eqref{3-1}  below, our building blocks have almost $(4\a-5)$-D temporal intermittency.  In particular, this temporal intermittency reaches the 3-D intermittency when $\a$ is close to 2.

In contrast,  
in another endpoint space $L_t^{\frac{2\a}{2\a-1}}L_x^{\infty}$, the building blocks shall be almost  spatial homogeneous and temporal intermittent. 
Instead of the intermittent jets,  
our spatial building blocks are constructed by using concentrated Mikado flows, which provide at most 2-D intermittency when 
$\a$ approaches 2. The temporal building blocks  provide even more intermittency and almost reach 4-D  intermittency as 
$\a$ nears  2. See Section \ref{Sec-S2} for the detailed construction of building blocks, including the selection of parameters.

It is  worth noting that 
the building blocks used here have stronger intermittency 
than those in the context of MHD \cite{CLZ}. 
In particular, 
the present proof can treat the high viscosity regime where $\a\in[\frac{3}{2},2)$ and  the endpoint case $L_t^{\infty}L_x^{\frac{3}{2\alpha-1}}$, which were not treated in \cite{CLZ,ZZL-MHD}.

Let us also mention that, unlike in \cite{LQZZ-NSE},
Theorem  \textcolor{blue}{\ref{Thm-Nonuniq-Hyper}} holds for arbitrarily prescribed initial data in $L_{\sigma}^{2}$. In order to maintain the initial data of approximate relaxed solutions, we use the  temporal truncations of perturbations in the convex integration scheme.  
The delicate point here is that 
the supremum norm  of associated Reynolds  stresses may explode  near the initial time. 
This requires delicate analysis of the Reynolds errors in two distinct temporal regimes: the regime near the initial time and another regime away from the initial time.  The  detailed arguments are contained in Sections \ref{Sec-Reynolds} and \ref{Sec-Reynolds 2}.

\medskip 
\paragraph{\bf Organization} 
The paper is organized as follows. 
In Section \ref{Sec-Main-Iteration}, 
we give the main iterative estimates in the convex integration scheme. 
Assuming these estimates to hold, 
we give the proof of Theorem 
\ref{Thm-Nonuniq-Hyper}, Corollary \ref{coro} 
and Theorem \ref{Theorem Vanishing noise}.
Then, 
in Sections \ref{Sec-S1}  and 
\ref{Sec-Reynolds} we mainly deal with the supercritical regime \texorpdfstring{$\mathcal{S}_{1}$}{S1}. 
We first construct the crucial velocity perturbations in Section \ref{Sec-S1} 
and verify the corresponding iterative estimates. 
Then, in Section \ref{Sec-Reynolds}, 
we verify the iterative estimates for the key  Reynolds stress. 
At last, in 
the second supercritical regime \texorpdfstring{$\mathcal{S}_{2}$}{S2}, 
the iterative estimates 
of the velocity field and the Reynolds stress  
are proved in Sections \ref{Sec-S2} 
and \ref{Sec-Reynolds 2}, respectively.

\medskip
\paragraph{\bf Notations}
 
Let $p \in[1,+\infty]$, $N\in \bbn \cup \{\infty\}$ and $s \in \mathbb{R}$. For simplicity, we use some shorthand notations as follows.
\begin{itemize}
\item 
$C_{c,\sigma}^{N}:=	C_{c,\sigma}^{N}(\[0, T\]\times \T^{3}),$ where the indices $c$ and $\sigma$ mean ``compact support in time'' and ``divergence-free'', respectively;\\
\item 
$L_x^p:=L_x^p(\mathbb{T}^3), \,\, H_x^s:=H_x^s(\mathbb{T}^3), \,\, W_x^{s, p}:=W_x^{s, p}(\mathbb{T}^3)$ represent the usual Sobolev spaces; 
When $I=[0, T]$ 
with $T\in (0, \infty)$, we simply
write $L^{p}_{t}:=L^{p}_{[0, T]}$, and $C_{t}:=C_{[0, T]}$;\\
\item 
$L^p(I; X)$ denotes the space of integrable functions from $I$ to $X$, equipped with the usual $L^p$-norm (with the usual adaptation when $p=\infty$) for any Banach space $X$, and we write $L^p_{I}X :=L^p(I; X)$ for simplicity;\\
\item Let $C_{I}X:= C(I; X)$ stands for the space of continuous functions from $I$ to $X$
equipped with the supremum norm and $C_{I,x}:=C_{I}C_{x}$;\\
\item 
We use the following notations of norms:
\begin{align*}
		\|u\|_{L_{t, x}^p}:=\|u\|_{L_{t}^pL_{ x}^p}, \quad
	\|u\|_{C_{t, x}^N}:=\sum_{0\leq m+|\zeta|\leq N}\|\partial_t^m \nabla^\zeta u\|_{C_{t, x}},
\end{align*}
and
\begin{align*}
\|u\|_{W_{x}^{N, p}}:=\sum_{0 \leq |m| \leq N}\|\nabla^m u\|_{L_{x}^p},\quad
	\|u\|_{W_{t,x}^{N, p}}:=\sum_{0 \leq n+|\zeta| \leq N}\|\p_{t}^{n}\nabla^{\zeta} u\|_{L_{t}^pL_{x}^p},
\end{align*}
where $\zeta=\left(\zeta_1, \zeta_2, \zeta_3\right)$ is the multi-index and $\nabla^\zeta:=\partial_{x_1}^{\zeta_1} \partial_{x_2}^{\zeta_2} \partial_{x_3}^{\zeta_3}$;\\
\item 
For any $T\in (0, \infty)$, $I\subset \[0, T \]$,
and $p$, $q\geq 1$,
let
\begin{equation}\label{1.6}
	\begin{aligned}
		\Lo^{p}_{\om}L^{q}_{I}X:=\left\{u: \om\times I\rightarrow X\mid \|u\|_{\Lo^{p}_{\om}L^{q}_{I}X}:=\(\mathbb{E}\|u\|_{L^{q}_{I}X}^{p}\)^{\frac{1}{p}}<\infty\right\},
	\end{aligned}
\end{equation}
and define $\Lo^{p}_{\om}C_{I}^{N}X$ and $\Lo^{p}_{\om} C_{I,x}^{N}$
in an analogous way;\\ 
\item 
We also use the notation $a\lesssim b$ if there exists a constant $C>0$ such that $a\leq Cb$.
\end{itemize}

\section{Main iteration} \label{Sec-Main-Iteration}

The key result of this section is the main iteration in Proposition \ref{Proposition Main iteration} which is crucial for the proof of Theorems \ref{Thm-Nonuniq-Hyper} and \ref{Theorem Vanishing noise}. 

To begin with, let  $z$ denote the stochastic convolutions of the noise:
\begin{equation}\label{2-1}
	\begin{aligned}
		&z(t)=e^{t\(-\nu(-\Delta)^{\a}-I\)}u_{0}+\int_{0}^{t}e^{(t-s)\(-\nu(-\Delta)^{\a}-I\)}\mathrm{d} W(s),\quad t\in [0, T],
	\end{aligned}
\end{equation}
which is the solution to the following linear stochastic equation
\begin{eqnarray}\label{2-2}
	\left\{\begin{array}{lc}
		\mathrm{d} z+(\nu(-\Delta)^{\a}+1)z \mathrm{d} t=\mathrm{d} W_{t},  \quad t\in [0, T],\\
		\div z=0,  \\
		z(0)=u_{0}.
	\end{array}\right.
\end{eqnarray}
For $t<0$ and $t>T$, we  extend $z(t)$ to be zero.

Set
\begin{align}\label{2-4}
	&z^{u}(t):=e^{t\(-\nu(-\Delta)^{\a}-I\)}u_{0},\quad Z(t):=\int_{0}^{t}e^{(t-s)\(-\nu(-\Delta)^{\a}-I\)}\mathrm{d} W(s),  \quad t\in [0, T].
\end{align}
One has
\begin{align*}
z(t)=z^{u}(t)+Z(t).
\end{align*}
The following estimate holds for the stochastic convolution. The classical case where $\a=1$ can be found in \citep{HZZ-2}.

\begin{proposition}[\citep{HZZ-2, CLZ}]\label{Proposition stochastic}
	Suppose that $ Tr(GG^{*})<\infty $, $ i=1,2$.
	Then, for any $\delta\in (0, 1/2)$, $\kappa\in [0, \a)$ and  $p\geq 2$,
	\begin{equation}\label{stochastic evolution}
		\begin{aligned}
			\textbf{E} \left[ \|Z\|^{p}_{C_{t}H_{x}^{\kappa+4}}
			+ \|Z\|^{p}_{C_{t}^{1/2-\delta} L_{x}^{2}}\right]
			\leq (p-1)^{p/2}L^{p},
		\end{aligned}
	\end{equation}
	where $ L \geq 1 $ is independent of $p$.
\end{proposition}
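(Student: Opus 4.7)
The plan is to adapt the standard stochastic-convolution estimates (as in the classical Da Prato--Zabczyk factorization) to the hyper-viscous semigroup $S(t) := e^{t(-\nu(-\Delta)^{\a}-I)}$. The key observation is that $Z$ is a centered Gaussian process in $U'$-valued sense because $W$ is Wiener and the integrand $s\mapsto S(t-s)G$ is deterministic; consequently Gaussian hypercontractivity yields, for any $p\geq 2$ and any Hilbert space $H$ to which $Z(t)$ belongs,
\begin{equation*}
\mathbb{E}\|Z(t)\|_H^p \leq (p-1)^{p/2}\bigl(\mathbb{E}\|Z(t)\|_H^2\bigr)^{p/2}.
\end{equation*}
This is the origin of the $(p-1)^{p/2}$ factor. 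The task then reduces to controlling the second moments in the stated norms by a constant $L$ independent of $p$.

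For the spatial regularity I would invoke It\^o's isometry:
\begin{equation*}
\mathbb{E}\|Z(t)\|_{H_x^{\kappa+4}}^2 = \int_0^t \bigl\|S(t-s)G\bigr\|_{L_2(U; H_x^{\kappa+4})}^2 \,\d s.
\end{equation*}
Since $G:U\to H_\sigma^4$ is Hilbert--Schmidt, it remains to gain $\kappa$ extra derivatives through the smoothing of $S$. Using the Fourier characterization one has the fractional-heat estimate $\|(-\Delta)^{\kappa/2}e^{-\tau\nu(-\Delta)^{\a}}\|_{L_x^2\to L_x^2}\lesssim \tau^{-\kappa/(2\a)}$ on the zero-mean sector, and the extra factor $e^{-\tau}$ handles low frequencies. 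Because $\kappa<\a$ implies $\kappa/\a<1$, the resulting time integral is finite and yields $\mathbb{E}\|Z(t)\|_{H_x^{\kappa+4}}^2 \lesssim \mathrm{Tr}(GG^*)$ uniformly in $t\in[0,T]$.

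To upgrade the pointwise-in-$t$ estimate to $C_tH_x^{\kappa+4}$ and to extract the H\"older regularity in time, I would apply the factorization representation
\begin{equation*}
Z(t)=\frac{\sin(\pi\beta)}{\pi}\int_0^t (t-s)^{\beta-1} S(t-s) Y(s)\,\d s, \qquad Y(s):=\int_0^s (s-r)^{-\beta} S(s-r)G\,\d W(r),
\end{equation*}
for a suitably chosen $\beta\in(0,1/2)$. A second use of It\^o's isometry together with the above smoothing bound shows $Y\in L^{p}_\omega L^q_t H_x^{\kappa+4}$ for large enough $q$, and then Young's inequality in time together with the deterministic bound on $(t-s)^{\beta-1}S(t-s)$ gives continuity and the desired supremum bound. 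The H\"older-$(1/2-\delta)$ estimate in $L_x^2$ follows in the same way: choosing $\beta>\delta$ and using $\|S(\tau)-I\|_{H_x^\eta\to L_x^2}\lesssim \tau^{\eta/(2\a)}$ on $H_x^\eta$ for suitable $\eta$ converts the factorized representation into a fractional difference estimate, after which Kolmogorov's continuity criterion delivers the stated norm with a constant of the form $C(\delta,\a,\mathrm{Tr}(GG^*))$.

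The main technical obstacle is the careful book-keeping of the semigroup smoothing in the fractional setting: one must simultaneously respect $\kappa<\a$ (to integrate $\tau^{-\kappa/\a}$ near $\tau=0$) and $\delta<1/2$ (which constrains the admissible $\beta$ in the factorization, since the H\"older exponent and the spatial gain compete through the relation $\beta+\kappa/(2\a)<1/2$ in the temporal piece). Once $\beta$ is chosen in the valid window, all estimates are uniform in $p$, and combining them with Gaussian hypercontractivity yields the bound $(p-1)^{p/2}L^p$ with $L\geq 1$ depending only on $\a,\delta,T$ and $\mathrm{Tr}(GG^*)$. This is exactly the form of the cited result in \cite{HZZ-2,CLZ}, adapted from $\a=1$ to $\a\in[1,2)$ without new difficulties beyond the indicated fractional exponent accounting.
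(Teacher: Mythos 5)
The paper does not prove this proposition at all: it is quoted directly from \cite{HZZ-2,CLZ} (the case $\a=1$ is in \cite{HZZ-2}, the fractional case in \cite{CLZ}), so there is no internal proof to compare against. Your sketch reconstructs exactly the standard argument used in those references: It\^o isometry plus the fractional-heat smoothing $\|(-\Delta)^{\kappa/2}e^{-\tau\nu(-\Delta)^{\a}}\|_{L^2_x\to L^2_x}\lesssim \tau^{-\kappa/(2\a)}$ (integrable in time precisely because $\kappa<\a$), the Da Prato--Zabczyk factorization (or a Garsia--Rodemich--Rumsey/Kolmogorov argument) for the $C_t$ and $C_t^{1/2-\delta}$ norms, and Gaussian hypercontractivity for the $(p-1)^{p/2}$ growth. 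The one point you should make explicit is where the hypercontractivity is applied: stating it pointwise in $t$ for Hilbert-space values does not by itself control $\mathbb{E}\|Z\|^{p}_{C_tH^{\kappa+4}_x}$ or the H\"older norm with an $L$ independent of $p$. Either apply the Kahane--Khintchine/hypercontractivity inequality to $Z$ viewed as a Gaussian random variable in the path spaces $C_tH^{\kappa+4}_x$ and $C_t^{1/2-\delta}L^2_x$, or run the factorization/GRR bound pathwise with a fixed exponent $q=q(\beta,\delta)$ and then interchange $L^p_\omega$ and $L^q_t$ by Minkowski (for $p\geq q$) before invoking the pointwise Gaussian moment bound; both routes keep $L$ depending only on $\a,\delta,T,\mathrm{Tr}(GG^*)$, which is what the statement requires. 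With that bookkeeping spelled out, your proposal is a correct and essentially faithful reconstruction of the cited proof.
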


The frequency truncated stochastic convolution is defined by
\begin{equation}\label{2-10}
	Z_{q}:={\bbp_{\leq \la^{15}}}Z,
\end{equation}
where $\widehat{\bbp_{\leq c}Z}(\xi):=\hat{Z}(\xi)\eta_{c}(\xi)$, $\xi\in \mathbb{Z}^{3},$ with $\eta_{c}(\xi):=\eta(\frac{\xi}{c})$ for some smooth radial function $\eta: \bbr^{3}\to [0,1]$ such that
\begin{align*}
    \eta(\xi)=\left\{\begin{array}{ll}
	1, & 0 \leq |\xi| <1, \\
	0, & \xi\geq  2.\end{array}\right.
\end{align*}

We set
\begin{align*}
	z_{q}:=z^{u}+Z_{q}.
\end{align*}
Note that by the Sobolev embedding $H_{x}^{2} \hookrightarrow L_{x}^{\infty}$ and \eqref{stochastic evolution}, 
\begin{equation}\label{2-11}
	\|Z_{q}\|_{\Lo^{p}_{\om}C_{t}L_{x}^{\infty}}
	\leq
	\|Z_{q}\|_{\Lo^{p}_{\om}C_{t}H_{x}^{2}}
	\lesssim  (p-1)^{\frac 12}L.
\end{equation}

Next, we use the random shift
\begin{align}  \label{uB-z-shift}
	u = v + z,
\end{align}
and reformulate the original stochastic NSE 
\eqref{equa-SNSE-High} as follows:
\begin{eqnarray}   \label{2-5}
	\left\{\begin{array}{lc}
		\p_{t}v+\nu(-\Delta)^{\a}v-z+\big((v+z)\cdot \nabla \big)(v+z)+\nabla P  =0,\\
		\div v=0,\\
		 v(0)=0.
	\end{array}\right.
\end{eqnarray}
It is thus equivalent to construct non-unique solutions to equation \eqref{2-5}.

For this purpose, 
we  use the convex integration method. 
Let $r\geq1$ and  $L\geq 1$ be as in Proposition \ref{Proposition stochastic}, and $ c $ satisfy $ q50^{q} \leq c 60^{q}$, $q\in\bbn$. Let $ a\in \bbn$, $b\in 2\bbn$, $\beta \in (0, 1]$, and let
\begin{equation}\label{2-6}
	\la := a^{b^{q}}, \quad \dq:=\lambda_{1}^{3\beta}\la^{-2\beta}, \quad
	\varsigma_{q}:=\la^{-40}.
\end{equation}
Take $a$, $b$ large enough and $\beta$, $\delta$ small such that
\begin{equation}\label{2-7}
	a\geq (60\cdot 8\cdot 50rL^{2})^{c},\quad
	b>\frac{150000}{\varepsilon}, \quad
	 \beta \leq \frac{5}{2b^2},
	 \quad\delta<\frac{1}{80},
\end{equation}
where  $\varepsilon \in \bbq_{+}$ is sufficiently small such that for the given $(s,\gamma,p) \in S_{1}$,
\begin{equation}\label{2-8}
	\varepsilon\leq \frac{1}{20}\min \left\{2-\a, \frac{4\a-5}{\gamma}+\frac{3}{p}-(2\a-1)-s \right\},
	\ \ \text{and}\ b\a, \ \ b\varepsilon \in \bbn,
\end{equation}
and for the given $(s,\gamma,p) \in S_{2}$,
\begin{equation}\label{2-9}
	\varepsilon\leq \frac{1}{20}\min \left\{2-\a, \frac{2\a}{\gamma}+\frac{2\a-2}{p}-(2\a-1)-s \right\},
\ \ \text{and}\ b\a,\ \ b\varepsilon \in \bbn.
\end{equation}
For convenience, we 
consider the case where 
$\alpha \in \bbq$ below.  
For the case where $\alpha \notin \bbq$, 
one can choose a rational number sufficiently close to $\alpha$ 
which can still guarantee the iterative estimates 
in the main iteration.

Now, we consider the relaxed  hyper-viscous Navier-Stokes-Reynolds system: for each integer $ q\in \bbn $,
\begin{eqnarray}\label{2-12}
	\left\{\begin{array}{lc}
	\p_{t}v_{q}+\nu(-\Delta)^{\a}v_{q}-z_{q}+\div \big((v_{q}+z_{q})\otimes(v_{q}+z_{q})\big)+\nabla P_{q}  =\div \ru,\\
		\div v_{q}=0,\\
			v_{q}(0)=0.
	\end{array}\right.
\end{eqnarray}

We have the following main iteration result for the relaxed velocity $v_{q}$ and the Reynolds stress $\Ru_{q}$.

\begin{proposition} [Main iteration] \label{Proposition Main iteration}
Let $T\in(0, \infty)$ and $ r\geq 1 $ be fixed. Let $(s,p,\ga)\in \mathcal{S}_{1} $ for $ \a\in[\frac{5}{4}, 2) $,  or $(s,p,\ga)\in \mathcal{S}_{2} $ for $ \a\in[1, 2) $, and let $(v_{q},\ru) $ be an  $(\ft)-adapted$ solution to
 (\ref{2-12}) for some $ q\in\bbn  $  satisfying that for any $ 0\leq N\leq 4 $ and any $ m\geq 1 $,
\begin{align}
		&\|v_{q}\|_{\Lo^{m}_{\om}C_{t,x}^{N}}
		\leq
		\la^{{2\a(N+1)+5}}\(8(N+10)mL^{2}50^{q-1}\)^{(N+10)50^{q-1}},\label{2-14}\\
		&\|\Ru_{q}\|_{\Lo^{m}_{\om} C_{t}L_{x}^{1}}\leq \la^{4\a+12}(8 mL^{2}50^{q})^{50^{q}},\label{2-15}\\
		&\|\Ru_{q}\|_{\Lo^{r}_{\om} L_{t}^{1}L_{x}^{1}}\lesssim {\dqq}, \label{2-16}
\end{align}	
where the implicit constant is deterministic and independent of $q$, $m$ and $r$.
Then, there exists an $(\ft)-adapted$ process
$(v_{q+1},\Ru_{q+1}) $
which solves (\ref{2-12}) and obeys estimates (\ref{2-14})-(\ref{2-16}) at level $ q+1 $.
In addition, it holds that
\begin{align}
	&\|v_{q+1}-v_{q}\|_{\Lo^{2r}_{\om} L_{t}^{2}L_{x}^{2}}
	\lesssim 
 {\delta_{q+1}^{\frac{1}{2}}},	\label{2-17}\\
	&\|v_{q+1}-v_{q}\|_{\Lo^{r}_{\om} L_{t}^{1}L_{x}^{2}}
	\leq\dqqq^{\frac{1}{2}},\label{2-18}\\
	&\|v_{q+1}-v_{q}\|_{\Lo^{r}_{\om}L_{t}^{\ga}W_{x}^{s,p}}
	\leq\dqqq^{\frac{1}{2}}.\label{2-19}
\end{align}	
\end{proposition}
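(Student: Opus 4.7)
The plan is a standard intermittent convex integration step, carried out with two different families of building blocks according to whether $(s,\gamma,p)\in\mathcal{S}_1$ or $(s,\gamma,p)\in\mathcal{S}_2$. First I would mollify $(v_q,\ru)$ in space and time at scale $\ell$ (a small negative power of $\laq$) to obtain $(v_\ell,R_\ell)$ which is smooth and still $(\mathcal{F}_t)$-adapted up to a small time shift, together with the frequency-truncated convolution $z_q$. This is needed to control high derivatives of the glued coefficients without losing the CI cancellation. The main gain is that the Reynolds stress $R_\ell$ inherits the $\Lo^r_\om L^1_t L^1_x$ smallness from \eqref{2-16}, while all $C^N_{t,x}$ norms can be bounded by negative powers of $\ell$.

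Next I would define the principal perturbation
\[
w_{q+1}^{(p)}(t,x)=\sum_k a_k(\omega,t,x)\,\mathbb{W}_k(t,x),
\]
where the amplitudes $a_k$ are obtained from the geometric decomposition of the stress $\rho_\ell\,\mathrm{Id}-R_\ell$ (with $\rho_\ell$ a suitable amplitude built from $\|R_\ell\|_{L^1_x}+\delta_{q+1}$) via the Geometric Lemma, and $\mathbb{W}_k$ are the building blocks of the regime under consideration. In $\mathcal{S}_1$ I would use the intermittent jets of \cite{BCV} dressed with an additional temporal concentration function as in \cite{CL-NSE1}, tuning the parameters in \eqref{3-1}-type relations so that the combined spatio-temporal intermittency is strong enough for the principal term to dominate the hyper-viscous error $\nu(-\Delta)^\alpha w_{q+1}$ when $\alpha\in[5/4,2)$; in $\mathcal{S}_2$ I would replace the spatial jets by concentrated Mikado flows and give the temporal intermittency almost full 4-D strength. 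In both cases I introduce the usual incompressibility corrector $w_{q+1}^{(c)}$ and temporal corrector $w_{q+1}^{(t)}$ so that $w_{q+1}:=w_{q+1}^{(p)}+w_{q+1}^{(c)}+w_{q+1}^{(t)}$ is divergence-free and the oscillation error is ready to be put into $\mathrm{div}$-form. To preserve $v_{q+1}(0)=0$ I multiply $w_{q+1}$ by a smooth temporal cut-off $\chi$ supported in $[\varsigma_q,T]$; this creates an additional error term localized near $t=0$.

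I then set $v_{q+1}=v_\ell+\chi w_{q+1}$ and define $\ru_{q+1}$ by solving, via the inverse divergence operator $\mathcal{R}$, the decomposition
\[
\mathrm{div}\,\ru_{q+1}=\mathrm{div}\,R_{\mathrm{lin}}+\mathrm{div}\,R_{\mathrm{cor}}+\mathrm{div}\,R_{\mathrm{osc}}+\mathrm{div}\,R_{\mathrm{com}}+\mathrm{div}\,R_{\mathrm{cut}},
\]
where the linear error collects $\nu(-\Delta)^\alpha(\chi w_{q+1})+\partial_t(\chi w_{q+1})$ minus the temporal-corrector cancellation, the corrector/oscillation errors absorb the high-frequency products of the building blocks (handled using the inverse-divergence on functions oscillating at frequency $\laq$, which gains a factor $\laq^{-1}$), the commutator error accounts for $z_q$ interacting with $w_{q+1}$ and for the mollification, and $R_{\mathrm{cut}}$ comes from $\chi'$ together with the mismatch on $[0,\varsigma_q]$. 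All objects are $(\mathcal{F}_t)$-adapted because $z_q$, $v_\ell$, $R_\ell$, and the cut-offs depend only on the past.

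The bookkeeping proceeds as in \cite{BV-NS,BCV,CL-NSE1,LQZZ-NSE}: for the $\Lo^{2r}_\om L^2_tL^2_x$, $\Lo^r_\om L^1_tL^2_x$ and $\Lo^r_\om L^\gamma_t W^{s,p}_x$ estimates \eqref{2-17}-\eqref{2-19} I combine $L^p$ norms of the building blocks with the amplitude bounds $\|a_k\|\lesssim \delta_{q+1}^{1/2}$ and with \eqref{stochastic evolution}, absorbing moments of the noise through Proposition~\ref{Proposition stochastic} at cost $(p-1)^{1/2}L$; for the $C^N_{t,x}$ bound \eqref{2-14} at level $q+1$ I use the explicit $\laq$-dependence of derivatives of $\mathbb{W}_k$ together with the derivative-cost of $a_k$ expressed through $\ell^{-1}$; for \eqref{2-15} I control each error in $\Lo^m_\om C_tL^1_x$ by a large power of $\laq$, which is still admissible because of the $50^{q+1}$-type gain in the right-hand side. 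The delicate bound is \eqref{2-16}: here I exploit the inverse-divergence gain $\laq^{-1}$ together with the almost-optimal intermittency to beat $\ell^{-N}$-losses and to accommodate $(-\Delta)^\alpha$ for $\alpha$ up to $2$, using \eqref{2-8}-\eqref{2-9} to guarantee that the net power of $\laq$ is a sufficiently negative fractional power of $\delta_{q+2}$.

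The main obstacles I expect are, first, the hyper-viscous term: when $\alpha>5/4$ the estimate of $\nu(-\Delta)^\alpha(\chi w_{q+1})$ in the oscillation/linear error forces the building blocks to carry much sharper intermittency than in the Lions regime, which is precisely why one must use the combined spatial-temporal concentrations described above and optimize the exponents in \eqref{2-8}-\eqref{2-9}; and second, the cut-off $\chi$: the error $R_{\mathrm{cut}}$ on $[0,\varsigma_q]$ is only bounded in $C_tL^1_x$ by a large negative power of $\varsigma_q$, so its $\Lo^r_\om L^1_tL^1_x$ norm must be controlled by exploiting the smallness of the time interval $\varsigma_q=\laq^{-40}$, which is the reason for splitting the Reynolds analysis into the two temporal regimes as announced in Sections~\ref{Sec-Reynolds} and \ref{Sec-Reynolds 2}. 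All remaining terms are routine once the parameter hierarchy in \eqref{2-6}-\eqref{2-9} is fixed.
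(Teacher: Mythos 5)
Your overall route is the same as the paper's: mollification at scale $\ell$, amplitudes from the Geometric Lemma applied to $\mathrm{Id}-\mathring{R}_\ell/\varrho$, intermittent jets with an extra temporal concentration in $\mathcal{S}_1$ and concentrated Mikado flows with temporal intermittency in $\mathcal{S}_2$, incompressibility and temporal correctors, inverse divergence for the stress, a temporal cut-off to keep $v_{q+1}(0)=0$, and a two-regime analysis in time. However, there is one concrete step in your construction that fails as written: you assemble $w_{q+1}=w_{q+1}^{(p)}+w_{q+1}^{(c)}+w_{q+1}^{(t)}$ first and then multiply the \emph{whole} perturbation by a single power of the cut-off $\chi$. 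The oscillation cancellation is quadratic: the term to be cancelled, coming from $\chi^2\,w_{q+1}^{(p)}\otimes w_{q+1}^{(p)}+\mathring{R}_\ell$, carries $\chi^2$, while your equation only produces $\partial_t(\chi\, w_{q+1}^{(t)})$ and $\partial_t(\chi\, w_{q+1}^{(o)})$. The mismatch $(\chi^2-\chi)\,\partial_t w_{q+1}^{(t)}$ (and its analogue for the $g_{(k)}^2-1$ corrector) is supported on the transition window of length $\sim\varsigma_q$ but is \emph{not} controlled by the smallness of that window, contrary to what you claim for your $R_{\mathrm{cut}}$. Indeed, in $\mathcal{S}_1$ the time derivative falls on the fast phase $\mu t$ of $\psi_{(k_1)}$, so this uncancelled piece has size of order a positive power of $\lambda_{q+1}$ (roughly $\lambda_{q+1}^{1-\varepsilon}$ in $L^1_tL^\rho_x$ per unit time), and even the zero-mode mismatch $(\chi^2-\chi)\sum_k (g_{(k)}^2-1)\fint W_{(k)}\otimes W_{(k)}\,\nabla(a_{(k)}^2)$ is of size $\varsigma_q\,\ell^{-20}\sim\lambda_q^{1560}$ in $L^1_tL^1_x$. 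Since $\varsigma_q^{-1}=\lambda_q^{40}$, $\ell^{-1}=\lambda_q^{80}$ and $\lambda_{q+1}=\lambda_q^{b}$ with $b>150000/\varepsilon$, none of these can be absorbed into $\delta_{q+2}$, so the decay estimate \eqref{2-16} at level $q+1$ breaks on $[\varsigma_q/2,\varsigma_q]$.

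The fix is exactly the paper's choice \eqref{3-50}: cut the linear parts with $\Theta_{q+1}$ but the quadratic (temporal) correctors with $\Theta_{q+1}^2$, i.e.\ $\widetilde w^{(t)}_{q+1}=\Theta_{q+1}^2 w^{(t)}_{q+1}$, $\widetilde w^{(o)}_{q+1}=\Theta_{q+1}^2 w^{(o)}_{q+1}$. Then the identities \eqref{3-47-2}, \eqref{3-48-2} survive exactly with the prefactor $\Theta_{q+1}^2$, and the only cut-off errors are $(1-\Theta_{q+1}^2)\mathring{R}_\ell$ and $\mathcal{R}\big((\partial_t\Theta_{q+1}^2)\,w^{(t)+(o)}_{q+1}\big)$ as in \eqref{4-8}, which are small because $w^{(t)}_{q+1},w^{(o)}_{q+1}$ themselves carry the factors $\mu^{-1}$ and $\sigma^{-1}$, not because the window is short (the window-smallness argument is only used near $t=0$, where $w_{q+1}\equiv0$ and all quantities are level-$q$ sized). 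A related, smaller point: in $\mathcal{S}_1$ you need \emph{two} temporal correctors — one ($w^{(t)}_{q+1}$) against the moving phase of the jets and one ($w^{(o)}_{q+1}$) against the $g_{(k)}^2-1$ oscillation — whereas your sketch mentions only one; in $\mathcal{S}_2$ the Mikado flows are steady and only $w^{(o)}_{q+1}$ is required. With these corrections your argument coincides with the paper's proof.
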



The proof of Proposition \ref{Proposition Main iteration} will be presented in Sections \ref{Sec-S1}-\ref{Sec-S2}. Before that, let us first prove Theorems \ref{Thm-Nonuniq-Hyper} and \ref{Theorem Vanishing noise} in Subsections \ref{subsec2.1} and \ref{subsec2.2} below, respectively, assuming Proposition \ref{Proposition Main iteration} to hold.

\subsection{Proof of Theorem \ref{Thm-Nonuniq-Hyper}} \label{subsec2.1}  

In this subsection, we prove 
Theorems \ref{Thm-Nonuniq-Hyper}.   
In the  initial step $q=0$, we set
\begin{align}
	&v_{0}=\widetilde{u},\quad \mathring{R}_{0}=\mathcal{R}(\p_{t}\widetilde{u}+\nu(-\Delta)^{\a}\widetilde{u}-z_{0})+(\widetilde{u}+z_{0})
\mathring{\otimes}(\widetilde{u}+z_{0}),    \label{7-1}   \\	
     &P_{0}=-\frac{1}{3}|\widetilde{u}_{0}+z_{0}|^{2},\label{7-2}
\end{align}
where $ \widetilde{u}\in C_{0} ^{\infty}([0,T]\times\T^{3})$  is the divergence-free and mean-free vector  given by Theorem \ref{Thm-Nonuniq-Hyper}. Note that, by choosing $a$ sufficiently large,  \eqref{2-14} is satisfied at level $q=0$.

We now proceed to verify the inductive decay estimate  \eqref{2-16} at level $q=0$. To this end, let
\begin{equation}\label{in}
	M:=\|u_{0}\|_{L^{2}_{x}}.
\end{equation}
By  (\ref{stochastic evolution}), we derive
\begin{align*}
	\|\mathring{R}_{0}\|_{\Lo^{r} _{\om}L_{t}^{1}L_{x}^{1}}
	&\leq\|\mathcal{R}\p_{t}\widetilde{u}\|_{L_{t}^{1}L_{x}^{2}}+\|\mathcal{R}(-\Delta)^{\a}\widetilde{u}\|_{L_{t}^{1}L_{x}^{2}} +\|\mathcal{R}z_{0}\|_{\Lo^{r} _{\om}L_{t}^{1}L_{x}^{2}} 
	+\|(\widetilde{u}+z_{0})\mathring{\otimes}(\widetilde{u}+z_{0})\|_{\Lo^{r} _{\om}L_{t}^{1}L_{x}^{1}} \notag\\
	&\lesssim
	\|\p_{t}\widetilde{u}\|_{C_{t}L_{x}^{2}}+\|\widetilde{u}\|_{C_{t,x}^{3}}+\|z_{0}\|_{\Lo^{2r}_{\om} C_{t}L_{x}^{2}} + \|\widetilde{u}\|^{2} _{C_{t}L_{x}^{2}}+ \|z_{0}\|^{2} _{\Lo^{2r}_{\om}C_{t}L_{x}^{2}}\notag\\
	&\lesssim
	L_{*}+M+(2r-1)^{\frac{1}{2}}L+(M+(2r-1)^{\frac{1}{2}}L)^{2}
	\lesssim \delta_{1},
\end{align*}
where  $L_{*}:=1+ \|\widetilde{u}\|_{C_{t,x}^{3}}^{2}$ is a deterministic constant. Note that the last step is valid by choosing $a$  and so $\delta_{1}=\lambda_{1}^{\beta}$ 
 large enough. Thus,  \eqref{2-16} is satisfied at level $q=0$.

Similarly,  by choosing $a$ possibly larger,  we have that  $\|\mathring{R}_{0}\|_{\Lo^{m}_{\om} C_{t}L_{x}^{1}}\leq\lambda_{0}^{4\a+5}8mL^{2} $, which satisfies \eqref{2-15}  at level $q=0$.

Thus, estimates \eqref{2-14}-\eqref{2-16} are satisfied at the initial step $q\geq 0$. Applying Proposition \ref{Proposition Main iteration},  we  obtain a sequence of relaxed solutions  $\{(v_{q}, \Ru_{q})\}_{q\geq 0}$ to \eqref{2-12}, which satisfy the iterative estimates \eqref{2-14}-\eqref{2-19} for all $q\geq 0$. Moreover, arguing as in \citep{CLZ}, one infers that the relaxed solutions $(v_{q},  \Ru_{q})$ are $(\F_t)$-adapted.

In view of  \eqref{2-17}-\eqref{2-19},  
we see that $\{v_{q}\}_{q\geq 0}$ is a  Cauchy sequence in the space $ \Lo^{2r}_{\om} L_{t}^{2}L_{x}^{2}\cap\Lo^{r}_{\om} L_{t}^{1}L_{x}^{2}\cap\Lo^{r}_{\om}L_{t}^{\ga}W_{x}^{s,p}$.
In particular,
there exists $ v$ in this space such that as $q\to \infty$,
\begin{align}\label{s1} 
v_{q}\rightarrow v \quad \text{in}\quad \Lo^{2r}_{\om} L_{t}^{2}L_{x}^{2}\cap\Lo^{r}_{\om} L_{t}^{1}L_{x}^{2}\cap\Lo^{r}_{\om} L_{t}^{\ga}W_{x}^{s,p},
\quad
	\Ru_{q}\rightarrow 0   \quad\text{in} \quad \Lo^{r}_{\om}L_{t}^{1}L_{x}^{1}.
\end{align}
 The limit $v$ indeed satisfies   \eqref{2-5} 
in the distributional sense (see, e.g., \citep{CLZ} for relevant arguments).
 As a consequence, $u:=v+z $ is a solution to \eqref{equa-SNSE-High} with  the initial datum $u_{0}$. By  (\ref{2-18}) and (\ref{2-19}), choosing $a$ sufficiently large, one also has
\begin{align}\label{2.19}
\|u-(\widetilde{u}+z)\|_{\Lo^{r}_{\om} L_{t}^{1}L_{x}^{2}}+\|u-(\widetilde{u}+z)\|_{\Lo^{r}_{\om}L_{t}^{\ga}W_{x}^{s,p}}
	&\leq\sum_{q=0}^{\infty}\|v_{q+1}-v_{q}\|_{\Lo^{r}_{\om} L_{t}^{1}L_{x}^{2}}+\|v_{q+1}-v_{q}\|_{\Lo^{r}_{\om}L_{t}^{\ga}W_{x}^{s,p}}\notag\\
	&\leq\sum_{q=0}^{\infty}4\dqqq^{1/2}<\epsilon,
\end{align}
which implies that \eqref{1.5} holds. 
Therefore, 
the proof of Theorem \ref{Thm-Nonuniq-Hyper} is complete. 
\hfill $\square$

\medskip 
\textbf{Proof of Corollary \ref{coro}.}  The non-uniqueness of solutions to \eqref{equa-SNSE-High} follows from the flexibility of smooth vector fields in Theorem \ref{Thm-Nonuniq-Hyper}. Actually, choose any divergence-free and mean-free vector fields  $ \{\widetilde{u}_{i}\}_{i\in \bbn_{+}} \subseteq C_{0}^{\infty}([0,T]\times\T^{3}) $ satisfying $\widetilde{u}_{i}(0)=0$ and 
\begin{equation}\label{2.20}
	\begin{aligned}
	\| \wt{u}_i\|_{L_{t}^{1}L_{x}^{2}}^{2}\geq 4T^{2}\|u_{0}\|_{L_{x}^{2}}^{2}+\frac{1}{2}L^{2}+4  (1+T^{3}Tr(G_{i}G_{i}^{*})),\quad 	\|\widetilde{u}_{i}-\widetilde{u}_{j}\|_{L_{t}^{\gamma}W_{x}^{s,p}}\geq 1, ~\forall i\neq j.
	\end{aligned}
\end{equation}
Let $ \epsilon=1/3 $. Then, by virtue of Theorem \ref{Thm-Nonuniq-Hyper}, 
we obtain a sequence of corresponding solutions $ \{u_{i}\} $  to \eqref{equa-SNSE-High} 
satisfying
\begin{align*}
	\|u_{i}-(\widetilde{u}_{i}+z)\|_{\Lo^{r}_{\om}L_{t}^{\ga}W_{x}^{s,p}}<1/3,
\end{align*}
which yields that 
\begin{align*}
\|u_{i}-u_{j}\|_{\Lo^{r}_{\om}L_{t}^{\ga}W_{x}^{s,p}}
	&\geq \|\widetilde{u}_{i}-\widetilde{u}_{j}\|_{L_{t}^{\gamma}W_{x}^{s,p}}-\|u_{i}-(\widetilde{u}_{i}+z)\|_{\Lo^{r}_{\om}L_{t}^{\ga}W_{x}^{s,p}}-\|u_{j}-(\widetilde{u}_{j}+z)\|_{\Lo^{r}_{\om}L_{t}^{\ga}W_{x}^{s,p}}\notag\\
	&\geq 1-1/3-1/3\geq 1/3. 
\end{align*} 
Thus, $\{u_{i}\}$ are different solutions.

At last, we show that $\{u_{i}\}$ are non-Leray-Hopf solutions. Assume that $\widetilde{v}$ is a Leray-Hopf solution to \eqref{equa-SNSE-High}. Then, it holds that
\begin{align}\label{1-12}
\mathbb{E} \|\widetilde{v}(t)\|_{L_{x}^{2}}^{2}+2\int_{0}^{t}\mathbb{E}\|\widetilde{v}(s)\|_{\dot{H}^{\a}}^{2}\mathrm{d}s\leq \|u_{0}\|_{L_{x}^{2}}^{2} + Tr(GG^{*})t.
\end{align}
Using \eqref{1-12} and Minkovski's inequality one has
\begin{align}\label{1-10-1}
\|\widetilde{v}\|_{L^{2}_{\om}L_{t}^{1}L_{x}^{2}}^{2}\leq \|\widetilde{v}\|_{L_{t}^{1}L^{2}_{\om}L_{x}^{2}}^{2}\leq T^{2}\|u_{0}\|_{L_{x}^{2}}^{2} +\frac{1}{2}Tr(GG^{*})T^{3} .
\end{align}

We claim that $u_i$ violates \eqref{1-10-1} for each $i\in \mathbb{N}_+$. To this end, by \eqref{1.5}, \eqref{2.20} and Proposition \ref{Proposition stochastic}, we derive
 \begin{align*} \|u_n\|_{L_{\om}^{2}L_{t}^{1}L_{x}^{2}}^{2}
     &\geq \frac{1}{2}\|z+\wt{u}\|_{L_{\om}^{2}L_{t}^{1}L_{x}^{2}}^{2}-\|u-(z+\wt{u})\|_{L_{\om}^{2}L_{t}^{1}L_{x}^{2}}^{2}\\
     &\geq
\frac{1}{4}\|\wt{u}\|_{L_{t}^{1}L_{x}^{2}}^{2}-\frac{1}{2}\|z\|_{L_{\om}^{2}L_{t}^{1}L_{x}^{2}}^{2}-\|u-(z+\wt{u})\|_{L_{\om}^{2}L_{t}^{1}L_{x}^{2}}^{2}\\
&> T^{2}\|u_{0}\|_{L_{x}^{2}}^{2}+T^{3}Tr(GG^{*}),
 \end{align*}
which shows that $u_i$ violates \eqref{1-10-1}, as claimed. Hence, $\{u_{i}\}$ are non-Leray-Hopf solutions. The proof  is complete.
\hfill$\square$

\subsection{Proof of Theorem \ref{Theorem Vanishing noise}}\label{subsec2.2} 

Let $z^{(\epsilon_{n})}$ solve the linear stochastic equation  \eqref{2-2} with $\epsilon_{n}W$ replacing $W$. One has
\begin{align}\label{1.3.5.11}
z^{(\epsilon_{n})}(t):=z^{u}(t)+Z^{(\epsilon_{n})}(t),\quad t\geq 0,
\end{align}
where
\begin{align} \label{1.3.5.12}
 z^{u}(t):=e^{t\(-\nu(-\Delta)^{\a}-I\)}u_{0},\quad Z^{(\epsilon_{n})}(t):=\epsilon_{n}\int_{0}^{t}e^{(t-s)\(-\nu(-\Delta)^{\a}-I\)}dW(s).
 \end{align}

Set
\begin{align*}
	&u_{n}:=(u*_{x}\varrho_{\lambda_{n}^{-1}})*_{t} \vartheta_{\lambda_{n}^{-1}},\quad
	z_{n}^{(\epsilon_{n})}:=(z^{(\epsilon_{n})}*_{x}\varrho_{\lambda_{n}^{-1}})*_{t}\vartheta_{\lambda_{n}^{-1}},\\
	 &z_{n}^{u}:=(z^{u}*_{x}\varrho_{\lambda_{n}^{-1}})*_{t} \vartheta_{\lambda_{n}^{-1}},\quad
	Z_{n}^{(\epsilon_{n})}:=(Z^{(\epsilon_{n})}*_{x}\varrho_{\lambda_{n}^{-1}})*_{t} \vartheta_{\lambda_{n}^{-1}},
\end{align*}
and 
	\begin{align}\label{zn}
	\wt{z}_{n}^{(\epsilon_{n})}:=z_{n}^{u}+\bbp_{\leq \lambda_{n}^{15}}Z_{n}^{(\epsilon_{n})},
\end{align}
where $\varrho$ and $\vartheta$ are as in \eqref{2-20} below.

Let
\begin{align}\label{1.3.5.17}
v_{n}:=u_{n}-z_{n}^{u}.
\end{align}

Since $u$ is a weak solution to \eqref{deterministic-NSE}, we deduce the following equation of $v_{n}$:
\begin{eqnarray}\label{1.3.55}
	\left\{\begin{array}{lc}
		\p_{t}v_{n}+\nu(-\Delta)^{\a}v_{n}-\wt{z}_{n}^{(\epsilon_{n})}+\div\big((v_{n}+\wt{z}_{n}^{(\epsilon_{n})})\mathring{\otimes} (v_{n}+\wt{z}_{n}^{(\epsilon_{n})})\big)+\nabla P_{n}
	    =\div \run,\\
		\div v_{n}=0, \quad v_{n}(0)=0,
	\end{array}\right.
\end{eqnarray}
where the stresses $\run$ is given by
\begin{align}	\label{1.3.56}
	&\run:=(v_{n}+\wt{z}_{n}^{(\epsilon_{n})})\mathring{\otimes} (v_{n}+\wt{z}_{n}^{(\epsilon_{n})})-\big((u\mathring{\otimes}u)*_{x}\varrho_{\lambda_{n}^{-1}})*_{t}\vartheta_{\lambda_{n}^{-1}}-\mathcal{R}(\bbp_{\leq f(n)}Z_{n}^{(\epsilon_{n})}\big),
\end{align}
and
\begin{equation}\label{1.3.58}
	P_{n}:= P*_{x}\varrho_{\lambda_{n}^{-1}}*_{t}\vartheta_{\lambda_{n}^{-1}}-|v_{n}+\wt{z}_{n}^{(\epsilon_{n})}|^{2}+(|u|^{2}*_{x}\varrho_{\lambda_{n}^{-1}})*_{t}\vartheta_{\lambda_{n}^{-1}}.
\end{equation}

\begin{lemma}\label{Lemma 2.4}
 For $a$ sufficiently large, $(v_{n}, \run)$ satisfies
 estimates (\ref{2-14})-(\ref{2-16}) at level $q=n  (\geq1)$.   
\end{lemma}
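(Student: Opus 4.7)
The plan is to verify the three estimates \eqref{2-14}, \eqref{2-15}, and \eqref{2-16} at level $q=n$ directly from the definitions in \eqref{1.3.5.17} and \eqref{1.3.56}, using three ingredients: the a priori regularity $u\in H_{t,x}^{\wt\beta}$; Bernstein/mollifier estimates at scale $\lambda_n^{-1}$; and Proposition \ref{Proposition stochastic} applied to $Z^{(\epsilon_n)}$, which gives $\|Z^{(\epsilon_n)}\|_{\Lo^{p}_\om C_tH^{\kappa+4}_x}\lesssim \epsilon_n(p-1)^{1/2}L$. Since $u$ and $z^u$ are deterministic, so is $v_n=u_n-z_n^u$; randomness enters $\run$ only through $\bbp_{\leq \lambda_n^{15}}Z_n^{(\epsilon_n)}$.

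For \eqref{2-14}, I would combine $u\in L^2_{t,x}$ with the smoothness of $z^u$ for $t>0$: standard Bernstein inequalities for the space-time mollification give
\begin{align*}
\|u_n\|_{C^N_{t,x}}+\|z_n^u\|_{C^N_{t,x}}\lesssim \lambda_n^{N+2}\bigl(\|u\|_{L^2_{t,x}}+\|u_0\|_{L^2_x}\bigr)
\end{align*}
for $0\leq N\leq 4$, and the factor $\lambda_n^{2\a(N+1)+5}$ on the right of \eqref{2-14} absorbs this once $a$ is sufficiently large. For \eqref{2-15}, using the algebraic identity $v_n+\wt z_n^{(\epsilon_n)}=u_n+\bbp_{\leq \lambda_n^{15}}Z_n^{(\epsilon_n)}$, I decompose
\begin{align*}
\run = \bigl(u_n\mathring\otimes u_n - (u\mathring\otimes u)_n\bigr) + E_2 - \mathcal{R}\bigl(\bbp_{\leq \lambda_n^{15}}Z_n^{(\epsilon_n)}\bigr),
\end{align*}
where $E_2$ collects the three cross and quadratic terms in $\bbp_{\leq \lambda_n^{15}}Z_n^{(\epsilon_n)}$. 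Young's convolution inequality together with Proposition \ref{Proposition stochastic} bounds each piece in $C_tL^1_x$ by a polynomial in $\lambda_n$ times $(mL)^{1/2}$, which is absorbed by $\lambda_n^{4\a+12}(8mL^2 50^n)^{50^n}$ once $a$ is large.

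The delicate estimate is \eqref{2-16}. For the commutator $E_1:=u_n\mathring\otimes u_n-(u\mathring\otimes u)_n$, I appeal to the classical Constantin--E--Titi bound, which under $u\in H_{t,x}^{\wt\beta}$ yields $\|E_1\|_{L^1_tL^1_x}\lesssim \lambda_n^{-2\wt\beta}\|u\|_{H^{\wt\beta}_{t,x}}^2$. Since $\delta_{n+1}=\lambda_1^{3\beta}\lambda_n^{-2\beta b}$ and $\beta\leq 5/(2b^2)$, taking $b$ large forces $2\wt\beta>2\beta b$ for the fixed $\wt\beta\in(0,1)$, hence $\|E_1\|_{L^1_tL^1_x}\leq \delta_{n+1}$. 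For $E_2$ and the inverse-divergence term, Proposition \ref{Proposition stochastic} yields bounds of the form $\epsilon_n\lambda_n^{C_0}$ in $\Lo^r_\om L^1_tL^1_x$ for some $C_0>0$ depending only on $\wt\beta$ and $r$; choosing $\epsilon_n$ small enough, say $\epsilon_n\leq \lambda_n^{-C_0-1}$, absorbs them. The main obstacle is thus trading the fixed Sobolev exponent $\wt\beta$ of $u$ for the iteration rate $\delta_{n+1}$: the required inequality $\wt\beta>\beta b$ is consistent with \eqref{2-7} only because $\beta\leq 5/(2b^2)$ for $b$ large, and the $\epsilon_n$-smallness of the noise is exploited separately to suppress the $\lambda_n$-polynomial losses in the stochastic portions, which is also the reason the convergence in Theorem \ref{Theorem Vanishing noise} is only along a subsequence with $\epsilon_n\to 0$ fast enough.
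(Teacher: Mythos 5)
Your proposal is correct and follows essentially the same route as the paper: the same decomposition of $\run$ via $v_n+\wt z_n^{(\epsilon_n)}=u_n+\bbp_{\leq\lambda_n^{15}}Z_n^{(\epsilon_n)}$, the same mollification commutator bound $\lambda_n^{-2\wt\beta}\|u\|_{H^{\wt\beta}_{t,x}}^2$ (the paper cites \cite[(6.36)]{ZZL-MHD}), Proposition \ref{Proposition stochastic} for the noise pieces with a polynomially small $\epsilon_n$ (the paper simply fixes $\epsilon_n=\lambda_n^{-1}$), and crude polynomial-in-$\lambda_n$ bounds for \eqref{2-14}--\eqref{2-15} absorbed by the large iterative constants. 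The only cosmetic difference is that you use Bernstein/Young mollifier bounds where the paper invokes the embeddings $H^{3}_{t,x}\hookrightarrow L^{\infty}_{t,x}$ and $W^{5,1}_{t,x}\hookrightarrow L^{\infty}_{t,x}$, and your observation that one needs $\beta b<\wt\beta$ (guaranteed by $\beta\leq 5/(2b^{2})$ with $b$ large) is exactly what the paper uses implicitly to absorb $\lambda_n^{-2\wt\beta}$ into $\delta_{n+1}$.
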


\begin{proof} 
Let us start with the decay estimate (\ref{2-16}).
Let
 \begin{align}\label{1.3.5.22}
 \epsilon_{n}:=\lambda_{n}^{-1},\quad \widetilde{M}:=\|u\|_{H_{t,x}^{\widetilde{\beta}}}.
 \end{align}
Using the Minkowski inequality and the Slobodetskii-type norm of Sobolev spaces
we have (see \citep[(6.35)]{ZZL-MHD})
\begin{equation}\label{1.3.59}
	\|u-u_{n}\|_{L_{t}^{2}L_{x}^{2}}\lesssim \lambda_{n}^{-\widetilde{\beta}}\|u\|_{H_{t,x}^{\widetilde{\beta}}}\lesssim \lambda_{n}^{-\widetilde{\beta}}\widetilde{M}.
\end{equation}
By   (\ref{1.3.5.22}) and   H\"{o}lder's inequality, 
\begin{align}\label{1.3.60}
	&\quad\|(v_{n}+\wt{z}_{n}^{(\epsilon_{n})})\mathring{\otimes}(v_{n}+\wt{z}_{n}^{(\epsilon_{n})})-u_{n}\mathring{\otimes}u_{n}\|_{L_{t}^{1}L_{x}^{1}}\notag\\
	&\lesssim\|(\wt{z}_{n}^{(\epsilon_{n})}-z_{n}^{u})\mathring{\otimes}(u_{n}-z_{n}^{u}+\wt{z}_{n}^{(\epsilon_{n})})\|_{L_{t}^{1}L_{x}^{1}}
	+\|u_{n}\mathring{\otimes}(\wt{z}_{n}^{(\epsilon_{n})}-z_{n}^{u})\|_{L_{t}^{1}L_{x}^{1}}\notag\\
	&\lesssim
	\|\wt{z}_{n}^{(\epsilon_{n})}-z_{n}^{u}\|_{L_{t}^{2}L_{x}^{2}}^{2}+\|u_{n}\|_{L_{t}^{2}L_{x}^{2}}\|\wt{z}_{n}^{(\epsilon_{n})}-z_{n}^{u}\|_{L_{t}^{2}L_{x}^{2}}\notag\\
	&\lesssim
	\|\bbp_{\leq \lambda_{n}^{15}}Z_{n}^{(\epsilon_{n})}\|_{L_{t}^{2}L_{x}^{2}}^{2}+\widetilde{M}\|\bbp_{\leq \lambda_{n}^{15}}Z_{n}^{(\epsilon_{n})}\|_{L_{t}^{2}L_{x}^{2}}.
\end{align}
Furthermore,  we have (see  \citep[(6.36)]{ZZL-MHD})
\begin{align}\label{1.3.62}
		&\quad\|u_{n}\mathring{\otimes} u_{n}-((u \mathring {\otimes} u)*_{x}\varrho_{\lambda_{n}^{-1}})*_{t}\vartheta_{\lambda_{n}^{-1}}\|_{L_{t}^{1}L_{x}^{1}}
	\lesssim	
	\lambda_{n}^{-2\widetilde{\beta}}\|u\|^{2}_{H_{t,x}^{\widetilde{\beta}}}\lesssim \lambda_{n}^{-2\widetilde{\beta}}\widetilde{M}^{2}.
\end{align}
Thus, using \eqref{1.3.56}  and Proposition \ref{Proposition stochastic} and combining \eqref{1.3.60} and \eqref{1.3.62} altogether  we obtain
\begin{align}\label{1.3.63}
	\|\run\|_{\Lo^{r}_\om L_{t}^{1}L_{x}^{1}}
	&\lesssim	\|\bbp_{\leq \lambda_{n}^{15}}Z_{n}^{(\epsilon_{n})}\|^{2}_{\Lo^{2r}_{\om}L_{t}^{2}L_{x}^{2}}+\widetilde{M}\|\bbp_{\leq \lambda_{n}^{15}}Z_{n}^{(\epsilon_{n})}\|_{\Lo^{2r}_{\om}L_{t}^{2}L_{x}^{2}}\notag\\
	&\quad
	+\lambda_{n}^{-2\widetilde{\beta}}\widetilde{M}^{2}
	+\|\mathcal{R}(\bbp_{\leq \lambda_{n}^{15}}Z_{n}^{(\epsilon_{n})})\|_{\Lo^{2r}_{\om}L_{t}^{2}L_{x}^{2}}
	\notag\\
	&\lesssim \epsilon_{n}^{2}(2r-1)L^{2}+\epsilon_{n}(2r-1)^{\frac{1}{2}}L+\lambda_{n}^{-2\widetilde{\beta}}\notag\\
	&\lesssim \lambda_{n}^{-1}+\lambda_{n}^{-2\widetilde{\beta}},
\end{align}
where  (\ref{1.3.5.22}) was applied in the last step.
Thus, estimate  (\ref{2-16}) is verified at level $q=n$.

Moreover, using the Sobolev embedding $H_{t,x}^{3}\hookrightarrow L_{t,x}^{\infty}$ we have that for $0\leq N\leq 4$,
\begin{align*}
	\|v_{n}\|_{\Lo^{m}_{\om}C_{t,x}^{N}}
	&\lesssim
	\|u_{n}\|_{H_{t,x}^{3+N}}+	\|z_{n}^{u}\|_{\Lo^{2m}_{\om}H_{t,x}^{3+N}}\notag\\
&\lesssim
\lambda_{n}^{3+N}\|u\|_{L_{t}^{2}L_{x}^{2}}+\lambda_{n}^{3+N}\|z^{u}\|_{\Lo^{2m}_{\om}L_{t}^{2}L_{x}^{2}}\notag\\
&\lesssim
\lambda_{n}^{3+N}(\widetilde{M}+M)
\lesssim
\lambda_{n}^{3+N},
\end{align*}
which yields (\ref{2-14}) at level $q=n$.

As for estimate (\ref{2-15}),  by  (\ref{1.3.5.17}), (\ref{1.3.5.22}), \eqref{zn} and H\"{o}lder's inequality,
\begin{align}\label{1.3.66}
	&\quad\|(v_{n}+\wt{z}_{n}^{(\epsilon_{n})})\mathring{\otimes}(v_{n}+\wt{z}_{n}^{(\epsilon_{n})})-u_{n}\mathring{\otimes}u_{n}\|_{C_{t}L_{x}^{1}}\notag\\
	&\lesssim\|(\wt{z}_{n}^{(\epsilon_{n})}-z_{n}^{u})\mathring{\otimes}(u_{n}-z_{n}^{u}+\wt{z}_{n}^{(\epsilon_{n})})\|_{C_{t}L_{x}^{1}}
	+\|u_{n}\mathring{\otimes}(\wt{z}_{n}^{(\epsilon_{n})}-z_{n}^{u})\|_{C_{t}L_{x}^{1}}\notag\\
	&\lesssim
	\|\wt{z}_{n}^{(\epsilon_{n})}-z_{n}^{u}\|_{C_{t}L_{x}^{2}}^{2}+\|u_{n}\|_{C_{t}L_{x}^{2}}\|\wt{z}_{n}^{(\epsilon_{n})}-z_{n}^{u}\|_{C_{t}L_{x}^{2}}\notag\\
	&\lesssim
	\|\bbp_{\leq \lambda_{n}^{15}}Z_{n}^{(\epsilon_{n})}\|_{C_{t}L_{x}^{2}}^{2}+\widetilde{M}\|\bbp_{\leq \lambda_{n}^{15}}Z_{n}^{(\epsilon_{n})}\|_{L_{t}^{2}L_{x}^{2}}.
\end{align}
Applying the Sobolev embedding
 $W_{t,x}^{5,1} \hookrightarrow L_{t,x}^{\infty}$  yields
 \begin{align}\label{1.3.68}
 	&\quad\|u_{n}\mathring{\otimes} u_{n}-\big((u \mathring {\otimes} u)*_{x}\varrho_{\lambda_{n}^{-1}}\big)*_{t}\vartheta_{\lambda_{n}^{-1}}\|_{C_{t}L_{x}^{1}}\notag\\
 	&\lesssim	
 	\|u_{n}\mathring{\otimes}u_{n}\|_{W_{t,x}^{5,1}}+\|\big((u \mathring {\otimes} u)*_{x}\varrho_{\lambda_{n}^{-1}}\big)*_{t}\vartheta_{\lambda_{n}^{-1}}\|_{W_{t,x}^{5,1}}\notag\\
 	&\lesssim	
 	\sum_{0\leq N_{1}+N_{2}\leq 5}
 	\(\|\p_{t}^{N_{1}}\nabla^{N_{2}}(u_{n}\mathring{\otimes}u_{n})\|_{L_{t}^{1}L_{x}^{1}}+\|\p_{t}^{N_{1}}\nabla^{N_{2}}\big((u \mathring {\otimes} u)*_{x}\varrho_{\lambda_{n}^{-1}}*_{t}\vartheta_{\lambda_{n}^{-1}}\big)\|_{L_{t}^{1}L_{x}^{1}}\)\notag\\
 	&\lesssim	
 	\lambda_{n}^{5}\|u_{n}\|^{2}_{L_{t}^{2}L_{x}^{2}}+\lambda_{n}^{5}\|u\|^{2}_{L_{t}^{2}L_{x}^{2}}\lesssim	
 	\lambda_{n}^{5}\widetilde{M}^{2}.
 \end{align}
Thus, combining Proposition \ref{Proposition stochastic}, (\ref{1.3.56}), (\ref{1.3.66}) and (\ref{1.3.68}) altogether we obtain
\begin{align}\label{1.3.70-1}
	\|\run\|_{\Lo^{m}_{\om}C_{t}L_{x}^{1}}
&\lesssim	
\|\bbp_{\leq \lambda_{n}^{15}}Z_{n}^{(\epsilon_{n})}\|^{2}_{\Lo^{2m}_{\om}C_{t}L_{x}^{2}}+\widetilde{M}\|\bbp_{\leq \lambda_{n}^{15}}Z_{n}^{(\epsilon_{n})}\|_{\Lo^{2m}_{\om}C_{t}L_{x}^{2}}+\lambda_{n}^{5}\widetilde{M}^{2}
+\|\mathcal{R}(\bbp_{\leq \lambda_{n}^{15}}Z_{n}^{(\epsilon_{n})})\|_{\Lo^{2m}_{\om}C_{t}L_{x}^{2}}
\notag\\
&\lesssim
\epsilon_{n}^{2}(2m-1)L^{2}+\epsilon_{n}(2m-1)^{\frac{1}{2}}L+\lambda_{n}^{5}\notag\\
&\lesssim
\lambda_{n}^{-2}(2m-1)L^{2}+\lambda_{n}^{-1}(2m-1)^{\frac{1}{2}}L+\lambda_{n}^{5}\lesssim \lambda_{n}^{5}(2m-1)L^{2}.
\end{align}
This yields estimate  (\ref{2-15}) at level $q=n$. The proof is complete. 
\end{proof}

\medskip 
\paragraph{\bf Proof of Theorem \ref{Theorem Vanishing noise} (continued).} 
By the virtue of Lemma \ref{Lemma 2.4}, we apply Proposition \ref{Proposition Main iteration} to get  a sequence of convex integration solutions $(v_{n,q},\runq)_{q\geq n}$ satisfying (\ref{2-14})-(\ref{2-16}),
$\{v_{n,q}\}_{q\geq n}$ is a Cauchy sequences in $\Lo^{2r}_{\om}L_{t}^{2}L_{x}^{2}$ and $ \{\runq\}_{q\geq n}$ converges strongly to $0$ in $\Lo^{r}_{\om}L_{t}^{1}L_{x}^{1}$.  Consequently, by taking the limit as 
$q\rightarrow \infty$, we obtain a weak solution $u^{(\epsilon_{n})}\in \Lo^{2r}_{\om}L_{t}^{2}L_{x}^{2}$ to \eqref{equa-SNSE-High} with $\epsilon_{n}W$ replacing $W$.

Moreover, by (\ref{1.3.5.17}), 
\begin{align}\label{1.3.73.01}
	\|u^{(\epsilon_{n})}-u_{n}\|_{\Lo^{2r}_{\om}L_{t}^{2}L_{x}^{2}}
	&\leq \|u^{(\epsilon_{n})}-(v_{n}+z^{(\epsilon_{n})})\|_{\Lo^{2r}_{\om}L_{t}^{2}L_{x}^{2}}+\|(v_{n}+z^{(\epsilon_{n})})-u_{n}\|_{\Lo^{2r}_{\om}L_{t}^{2}L_{x}^{2}}\notag\\
	&\leq\sum_{q\geq n}\|v_{n,q+1}-v_{n,q}\|_{\Lo^{2r}_{\om}L_{t}^{2}L_{x}^{2}}+\|z_{n}^{u}-z^{u}\|_{\Lo^{2r}_{\om}L_{t}^{2}L_{x}^{2}}+\|Z^{(\epsilon_{n})}\|_{\Lo^{2r}_{\om}L_{t}^{2}L_{x}^{2}},
\end{align}
where $z^{u}$ and $Z^{(\epsilon_{n})}$ are given by (\ref{1.3.5.12}).

In order to control  $\|z_{n}^{u}-z^{u}\|_{\Lo^{2r}_{\om}L_{t}^{2}L_{x}^{2}}$, we consider two different temporal intervals $(\lambda_{n}^{-1/2}, T]$
and   $[0,\lambda_{n}^{-1/2}]$.
In the first case where $t\in(\lambda_{n}^{-1/2}, T]$,  using  the standard mollification estimates and \citep[(5.30), (5.31)]{CLZ} we have
 \begin{align*}
 	\|z_{n}^{u}-z^{u}\|_{L_{(\lambda_{n}^{-1/2}, T]}^{2}L_{x}^{2}}\lesssim \lambda_{n}^{-\frac{1}{2}}(\|z^{u}\|_{C_{(\lambda_{n}^{-1/2}-\lambda_{n}^{-1}, T]}^{\frac{1}{2}}L_{x}^{2}}+\|z^{u}\|_{C_{(\lambda_{n}^{-1/2}, T]}H_{x}^{\frac{1}{2}}})\lesssim \lambda_{n}^{-\frac{1}{2}} (1+\lambda_{n}^{\frac{1}{4}})M,
 	\end{align*}
where $M$ is as in (\ref{in}). Moreover, in the  case where $t\in[0,\lambda_{n}^{-1/2}]$, we have
  \begin{align*}
 	\|z_{n}^{u}-z^{u}\|_{L_{[0,\lambda_{n}^{-1/2}]}^{2}L_{x}^{2}}\leq \|z_{n}^{u}\|_{L_{[0,\lambda_{n}^{-1/2}]}^{2}L_{x}^{2}}+\|z^{u}\|_{L_{[0,\lambda_{n}^{-1/2}]}^{2}L_{x}^{2}}\lesssim \lambda_{n}^{-1/4}\|z^{u}\|_{C_{[0,\lambda_{n}^{-1/2}]}L_{x}^{2}}\lesssim \lambda_{n}^{-\frac{1}{4}}M.
 \end{align*}
Hence, we obtain
  \begin{align}\label{y3}
	\|z_{n}^{u}-z^{u}\|_{L_{t}^{2}L_{x}^{2}}\lesssim \lambda_{n}^{-\frac{1}{2}} (1+\lambda_{n}^{\frac{1}{4}})M+\lambda_{n}^{-\frac{1}{4}}M\lesssim \lambda_{n}^{-\frac{1}{4}}M.
\end{align}
By   Proposition \ref{Proposition stochastic}, (\ref{1.3.73.01}) and (\ref{y3}), we obtain
\begin{align}\label{1.3.73.1}
	\|u^{(\epsilon_{n})}-u_{n}\|_{\Lo^{2r}_{\om}L_{t}^{2}L_{x}^{2}}
	&\lesssim 	\sum_{q\geq n} \dqq^{\frac{1}{2}}+\lambda_{n}^{-\frac{1}{4}}M+\lambda_{n}^{-1}(2r-1)^{\frac{1}{2}}L 	\lesssim \sum_{q\geq n} \dqq^{\frac{1}{2}}+\lambda_{n}^{-\frac{1}{4}}.
\end{align}
Therefore,  it follows from  \eqref{2-7}, \eqref{1.3.59} and \eqref{1.3.73.1} that  for $n> 10$,
\begin{align}\label{1.3.76-1}
	\|u^{(\epsilon_{n})}-u\|_{\Lo^{2r}_{\om}L_{t}^{2}L_{x}^{2}}
	\leq
	\|u^{(\epsilon_{n})}-u_{n}\|_{\Lo^{2r}_{\om}L_{t}^{2}L_{x}^{2}} 	+\|u-u_{n}\|_{\Lo^{2r}_{\om}L_{t}^{2}L_{x}^{2}}
 \lesssim \sum_{q\geq n}\dqq^{\frac{1}{2}}+\lambda_{n}^{-\frac{1}{4}}+\lambda_{n}^{-\widetilde{\beta}}\widetilde{M}
	 \leq \frac{1}{n}\to 0,
\end{align}
where we choose $a$ sufficiently large enough. 

Therefore, the proof of Theorem \ref{Theorem Vanishing noise} is complete.\hfill$\square$

\subsection{Mollification}
The remaining of the present paper is to prove the main iteration in Proposition \ref{Proposition Main iteration}.   In order to avoid the loss of derivatives in the convex integration scheme,
we mollify the velocity fields, Reynolds stress and noise as follows.

Let $\varrho \in C_c^{\infty}(\bbr^3 ; \mathbb{R}_{+})$  with supp $\varrho\subset B_1(0)$, and $\vartheta \in C_c^{\infty}(\bbr ; \mathbb{R}_{+})$ with ${\rm supp}\, \vartheta \subset[0,1]$.
Let
\begin{align}  \label{2-20}
	\varrho_{\ell} := \ell^{-3} \varrho(\cdot / \ell),\ \
	\vartheta_{\ell} := \ell^{-1} \vartheta(\cdot / \ell),
\end{align}
where 
 \begin{equation}\label{2-21}
 	\begin{aligned}
 		\ell:=\la^{-80}.
	\end{aligned}
\end{equation}
By  (\ref{2-6}) and (\ref{2-7}), we have
\begin{equation}\label{2-22}
	\begin{aligned}
	(8\cdot 80rL^{2}50^{q})^{50^{q}}\leq \la, \quad	\ell^{-170 }<\lambda_{q+1}^{\frac{1}{8}\varepsilon}.
	\end{aligned}
\end{equation}

Define the spatial-temporal mollifications of $ v_{q}$,   $z_{q}$, $\Ru_{q}$  as follows:
\begin{align}
       & v_{\ell}:=(v_{q} *_x \varrho_{\ell}) *_t \vartheta_{\ell},\label{2-23}\\
		& z_{\ell}:=z^{u}_{\ell}+Z_{\ell}\ \
          {\rm with} \ \
         	z^{u}_{\ell}:=(z^{u} *_x \varrho_{\ell}) *_t \vartheta_{\ell},\ \
           Z_{\ell}:=(Z_{q} *_x \varrho_{\ell}) *_t \vartheta_{\ell},\label{2-24}\\
	   &\Ru_{\ell}:=(\Ru_{q} *_x \varrho_{\ell}) *_t \vartheta_{\ell}. \label{2-25}
	\end{align}
 For $t<0$ and $t>T$, we let $v_{q}$, $\ru$,  and $z_{q}$ be zero.\\
Then by (\ref{2-12}), $ (v_{\ell}, \Ru_{\ell}) $ satisfies
\begin{align}\label{2-26}
	\left\{\begin{array}{lc}
	\p_{t}v_{\ell}+\nu(-\Delta)^{\a}v_{\ell}-z_{\ell}
 +\div\big((v_{\ell}+z_{\ell})\otimes(v_{\ell}+z_{\ell})\big)+\nabla P_{\ell}=\div( \Ru_{\ell}+\Ru_{com1}),\\
		\div v_{\ell}=0,\\
		 v_{\ell}(0)=0,
	\end{array}\right.
\end{align}
where the traceless symmetric commutator stress $ \Ru_{com1} $ is of form
\begin{equation}\label{2-27}
	\begin{aligned}
	\Ru_{com1}:=&(v_{\ell}+z_{\ell})\mathring{\otimes}(v_{\ell}+z_{\ell})-\((v_{q}+z_{q})\mathring{\otimes}(v_{q}+z_{q})\)*_x \varrho_{\ell} *_t \vartheta_{\ell},
	\end{aligned}
\end{equation}
and the pressure $	P_{\ell} $ is given by
\begin{equation}\label{3.6}
	\begin{aligned}
		P_{\ell}:=(P_{q} *_x \varrho_{\ell}) *_t \vartheta_{\ell}-\frac{1}{3}|v_{\ell}+z_{\ell}|^{2}+\frac{1}{3}\((|v_{q}+z_{q}|^{2})*_x \varrho_{\ell}\) *_t \vartheta_{\ell}.
	\end{aligned}
\end{equation}

\section{Velocity perturbations in the supercritical regime \texorpdfstring{$\mathcal{S}_{1}$}{S1}} \label{Sec-S1} 

From this section to Section \ref{Sec-Reynolds} we aim to prove the main iteration in Proposition \ref{Proposition Main iteration} in the first supercritical regime $\mathcal{S}_{1}$. 
Let us first deal with the velocity perturbations in this section.

\subsection{Intermittent  jets}  
 
We use  the spatial-temporal intermittent building blocks which are indexed by five parameters $r_{\perp}$, $r_{\parallel}$, $\lambda$, $\tau$ and $\sigma$ :
\begin{equation}\label{3-1}
	\begin{aligned}
r_{\perp}:=\lambda_{q+1}^{-1+2 \varepsilon},\quad r_{\parallel}:=\lambda_{q+1}^{-1+4 \varepsilon},\quad \lambda:=\lambda_{q+1}, \quad
\tau:=\lambda_{q+1}^{4\alpha-5+11\varepsilon},\quad
\mu:=\lambda_{q+1}^{2\alpha-1+2\varepsilon},\quad \sigma:=\lambda_{q+1}^{2 \varepsilon},
	\end{aligned}
\end{equation}
where $\varepsilon$ satisfies (\ref{2-8}). Note that
\begin{align}\label{3-1-2}
\sigma\tau<r_{\perp}r_{\parallel}^{-1}\lambda_{q+1}\mu, \ \ \lambda_{q+1}<r_{\perp}r_{\parallel}^{-1}\lambda_{q+1}\mu,  \ \ \ell^{-12}\lambda_{q+1}^{-1}<r_{\perp}r_{\parallel}^{-1}<1.
\end{align}

\medskip 
\paragraph{\bf $\bullet $ {Spatial building blocks.}} 
Let $\Phi: \bbr^{2} \rightarrow \bbr$ be a smooth cut-off function supported on a ball of radius 1 and normalize $\Phi$ such that $\phi:=-\Delta \Phi$ satisfies
\begin{equation}\label{3-2}
	\begin{aligned}
\frac{1}{2 \pi} \int_{\bbr} \phi^2(x) dx=1.
	\end{aligned}
\end{equation}
Let $\psi: \bbr \rightarrow \bbr$ be a smooth and mean-zero function supported on $[-1,1]$ and satisfy
\begin{equation}\label{3-3}
	\begin{aligned}
		\frac{1}{2 \pi} \int_{\bbr} \psi^2(x) dx=1.
	\end{aligned}
\end{equation}
The  rescaled cut-off functions are defined by
\begin{flalign*}
\phi_{r_{\perp}}(x_{1},x_{2}):=r_{\perp}^{-1}\phi(\frac{x_{1}}{r_{\perp}},\frac{x_{2}}{r_{\perp}}), \quad 
\Phi_{r_{\perp}}(x_{1},x_{2}):=r_{\perp}^{-1}\Phi(\frac{x_{1}}{r_{\perp}},\frac{x_{2}}{r_{\perp}}), \quad 
\psi_{r_{\perp}}(x):=r_{\parallel}^{-\frac{1}{2}} \psi\left(\frac{x}{r_{\parallel}}\right) .
\end{flalign*}
 By an abuse of notation, we periodize $\phi_{r_{\perp}}$, $\Phi_{r_{\perp}}$ and $\psi_{r_{\parallel}}$, so that  $\phi_{r_{\perp}}$, $\Phi_{r_{\perp}}$ are treated as a periodic function defined on $\T^{2}$ and $\psi_{r_{\parallel}}$ is treated as a periodic function defined on $\T$.
 
The intermittent jets first introduced in \citep{BCV} are defined by
\begin{equation}\label{3-4}
	\begin{aligned}
W_{(k)}:=\psi_{r_{\parallel}}\big(\lambda r_{\perp} N_{\Lambda}( k_{1} \cdot x+\mu t)\big)\phi_{r_{\perp}}\big(\lambda r_{\perp} N_{\Lambda} k \cdot x, \lambda r_{\perp} N_{\Lambda} k_{2} \cdot x\big) k_1, \quad k \in \Lambda.
	\end{aligned}
\end{equation}
Here,  $\left(k, k_1, k_2\right)\subseteq \bbr^{3}$ are the orthonormal bases as in the Geometric Lemma \ref{Lemma First Geometric}, $N_{\Lambda}$ is given by (\ref{6.3}) below, and  $\Lambda$ is the wave vector set. 
 Note that, $\{W_{(k)}\}$ are $\left(\mathbb{T} /(\lambda r_{\perp})\right)^3$-periodic.

For  simplicity, we set
\begin{align}
&\psi_{(k_{1})}(x):=\psi_{r_{\parallel}}\big(\lambda r_{\perp} N_{\Lambda}( k_{1} \cdot x+\mu t)\big),\label{3-5}\\
&\phi_{(k)}(x):=\phi_{r_{\perp}}\big(\lambda r_{\perp} N_{\Lambda} k \cdot x,\lambda r_{\perp} N_{\Lambda} k_{2} \cdot x\big), \label{3-6}\\
&\Phi_{(k)}(x):=\Phi_{r_{\perp}}\big(\lambda r_{\perp} N_{\Lambda} k \cdot x,\lambda r_{\perp} N_{\Lambda} k_{2} \cdot x\big),\label{3-7}
\end{align}
and rewrite
\begin{align} \label{3-8}
	  W_{(k)}=\psi_{(k_{1})}\phi_{(k)} k_1,\ k \in \Lambda.  
\end{align}

Since $W_{(k)}$ is not divergence-free,  we  need a corrector
	\begin{align}\label{3-9}
	\widetilde{W}_{(k)}^c & :=\frac{1}{\lambda^2 N_{\Lambda}^2}\nabla\psi_{(k_{1})}\times \curl(\Phi_{(k)} k_1), \quad k \in \Lambda, 
\end{align}
and set
	\begin{align}\label{3-10}
	W_{(k)}^c & :=\frac{1}{\lambda^2 N_{\Lambda}^2}\psi_{(k_{1})}\Phi_{(k)} k_1, \quad k \in \Lambda.
\end{align}
Then, one has 
	\begin{align}\label{3-11}
	W_{(k)}+\widetilde{W}_{(k)}^c& :=\curl\curl(\frac{1}{\lambda^2 N_{\Lambda}^2}\psi_{(k_{1})}\Phi_{(k)} k_1)=\curl\curl W_{(k)}^c,
\end{align}
which yields that
	\begin{align}\label{3-12}
	\div (W_{(k)}+\widetilde{W}_{(k)}^c)& =0.
\end{align}

The following lemma provides the analytical estimates for intermittent jets.

\begin{lemma} [{\citep{BV-convex}}, Estimates of intermittent jets] \label{Lemma spatial building blocks}  

For any $p \in[1, \infty]$ and $N \in \bbn$, one has
\begin{align}
	&\|\nabla^N\p_{t}^{M} \psi_{(k_{1})}\|_{L_x^p} \lesssim r_{\parallel}^{\frac{1}{p}-\frac{1}{2}}\(\frac{r_{\perp}\lambda}{r_{\parallel}}\)^{N}\(\frac{r_{\perp}\lambda\mu}{r_{\parallel}}\)^{M},\label{3-13}\\
&\|\nabla^N \phi_{(k)}\|_{L_x^p}+\|\nabla^N \Phi_{(k)}\|_{L_x^p} \lesssim r_{\perp}^{\frac{2}{p}-1}\lambda^N.\label{3-14}
\end{align}
Moreover,
	\begin{align}\label{3-15}
	&\|\nabla^N\p_{t}^{M}  W_{(k)}\|_{C_{t} L_x^p}+\frac{r_{\parallel}}{r_{\perp}}\|\nabla^N\p_{t}^{M}  \widetilde{W}_{(k)}\|_{ C_{t}L_x^p}+\lambda^2\|\nabla^N\p_{t}^{M} W_{(k)}^c\|_{ C_{t}L_x^p} \lesssim r_{\perp}^{\frac{2}{p}-1}r_{\parallel}^{\frac{1}{p}-\frac{1}{2}} \lambda^N\(\frac{r_{\perp}\lambda\mu}{r_{\parallel}}\)^{M}, ~ k \in \Lambda.
\end{align} 
The implicit constants above are deterministic and  independent of the parameters $r_{\perp}$, $r_{\parallel}$, $\mu$ and $\lambda$.
\end{lemma}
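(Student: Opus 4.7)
The plan is to establish \eqref{3-13}--\eqref{3-15} by first computing the scaling of the unrescaled profiles $\psi$, $\phi$, $\Phi$ via elementary changes of variables, transferring these to the torus using that the composition argument $\lambda r_\perp N_\Lambda(k\cdot x)$ has integer period, and then combining the one- and two-dimensional bounds through the tensor-product structure of the intermittent jets.

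For \eqref{3-13}, starting from $\psi_{(k_1)}(t,x)=\psi_{r_\parallel}\bigl(\lambda r_\perp N_\Lambda(k_1\cdot x+\mu t)\bigr)$ with $|k_1|=1$, the chain rule produces a factor $\lambda r_\perp N_\Lambda$ per spatial derivative and the additional factor $\mu$ per temporal derivative. A direct change of variables in $\psi_{r_\parallel}(y)=r_\parallel^{-1/2}\psi(y/r_\parallel)$, together with the facts that $\psi$ is compactly supported in $[-1,1]$ and $\lambda r_\perp N_\Lambda\in\bbn$ (so the periodization on $\T$ consists of finitely many disjoint translates of a single bump), gives $\|\partial_y^N\psi_{r_\parallel}\|_{L^p(\T)}\lesssim r_\parallel^{1/p-1/2-N}$. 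Combining these yields exactly \eqref{3-13}. The argument for \eqref{3-14} is analogous in two dimensions, using $\phi_{r_\perp}(y_1,y_2)=r_\perp^{-1}\phi(y_1/r_\perp,y_2/r_\perp)$ and that the composition with $\lambda r_\perp N_\Lambda(k\cdot x,k_2\cdot x)$ costs $\lambda r_\perp N_\Lambda\cdot r_\perp^{-1}\simeq \lambda$ per spatial derivative; the bound for $\Phi_{r_\perp}$ is identical.

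For \eqref{3-15}, the central observation is that in the orthonormal frame $(k,k_2,k_1)$ the factor $\psi_{(k_1)}$ depends only on the $k_1$-coordinate while $\phi_{(k)}$ and $\Phi_{(k)}$ depend only on the $(k,k_2)$-plane, so the $L^p(\T^3)$-norm of any product factors into the product of the one- and two-dimensional norms. This gives the base bound $\|W_{(k)}\|_{L^p}\lesssim r_\perp^{2/p-1}r_\parallel^{1/p-1/2}$. Higher derivatives are distributed via Leibniz: thanks to \eqref{3-1} one has $\lambda r_\perp/r_\parallel=\lambda^{1-2\varepsilon}\ll\lambda$, so the cost of spatial derivatives landing on $\phi_{(k)}$ dominates and contributes $\lambda^N$, while time derivatives hit only $\psi_{(k_1)}$ and produce exactly the factor $(\lambda r_\perp\mu/r_\parallel)^M$.

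The two corrector estimates follow from this base case. For $W_{(k)}^c=(\lambda N_\Lambda)^{-2}\psi_{(k_1)}\Phi_{(k)}k_1$ one simply picks up an extra $\lambda^{-2}$. For $\widetilde W_{(k)}^c=(\lambda N_\Lambda)^{-2}\nabla\psi_{(k_1)}\times\curl(\Phi_{(k)}k_1)$, the gradient on $\psi_{(k_1)}$ contributes $\lambda r_\perp/r_\parallel$ while $\curl(\Phi_{(k)}k_1)=\nabla\Phi_{(k)}\times k_1$ contributes $\lambda$, so dividing by $\lambda^2$ produces the prefactor $r_\perp/r_\parallel$ (equivalently, the inverse weight $r_\parallel/r_\perp$ in \eqref{3-15}). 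The only delicate accounting point is handling mixed space-time derivatives in $W_{(k)}$, but since $\phi_{(k)}$ and $k_1$ are time-independent the Leibniz expansion collapses so that all time derivatives act on $\psi_{(k_1)}$ and the $N$ spatial derivatives redistribute between the two factors. No genuine obstacle arises beyond careful bookkeeping, and the lemma is then an algebraic recombination of \eqref{3-13}--\eqref{3-14}.
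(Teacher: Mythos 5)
Your proposal is correct, and it reproduces the standard scaling/Leibniz argument for intermittent jets: the paper itself gives no proof of this lemma but cites \citep{BV-convex} (and the construction of \citep{BCV}), where exactly this argument appears — rescale the compactly supported profiles $\psi$, $\phi$, $\Phi$ by change of variables, use that $\lambda r_{\perp}N_{\Lambda}\in\bbn$ so periodization and integer-frequency composition leave $L^p(\T)$-norms unchanged, factor the $L^p(\T^3)$-norm through the orthogonal frame $(k,k_1,k_2)$, and distribute derivatives noting $\lambda r_{\perp}/r_{\parallel}=\lambda^{1-2\varepsilon}\leq\lambda$ while time derivatives only hit $\psi_{(k_1)}$. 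Your accounting for the correctors (extra $\lambda^{-2}$ for $W_{(k)}^c$, and the factors $\lambda r_{\perp}/r_{\parallel}$ from $\nabla\psi_{(k_1)}$ and $\lambda$ from $\curl(\Phi_{(k)}k_1)$ yielding the weight $r_{\parallel}/r_{\perp}$ for $\widetilde{W}_{(k)}^c$) matches the cited proof, so no gap remains.
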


\paragraph{\bf $\bullet$ Temporal building blocks.} 
To address the hyper-viscosity,   especially for the case beyond the Lions exponent, we 
also need the  temporal intermittency  in building blocks, which were first introduced in \citep{CL-Sobolev transport}.

Let $\{g_{k}\}_{k\in \Lambda}\subset C_c^{\infty}([0, T])$ be any cut-off function satisfying
\begin{equation}\label{3-16}
	\begin{aligned}
\fint_0^T g_{k}^{2}(t) \mathrm{d} t=1,\ \ k\in \Lambda,
\end{aligned}
\end{equation}
and $g_{k}$, $g_{k'}$ have disjoint temporal supports if $k\neq k'$. Note that, one can choose $g_{k}=g(t-\a_{k})$, where $g\in C_c^{\infty}([0, T])$ with very small support and $\{\a_{k}\}_{k\in\Lambda}$ are  suitable temporal shifts to guarantee the disjoint supports of $\{g_{k}\}_{k\in \Lambda}$.

We rescale  $g_{k}$ by
\begin{equation}\label{3-17}
	\begin{aligned}
g_{k,\tau}(t):=\tau^{\frac{1}{2}} g_{k}(\tau t) ,\ \ k\in \Lambda,
\end{aligned}
\end{equation}
and then  periodize  $ g_{k,\tau} $ (still denoted by $ g_{k, \tau}$) to be treated as a periodic function on  $[0, T]$.  Moreover,  let
\begin{equation}\label{3-18}
	\begin{aligned}
h_{k,\tau}(t):=\int_0^t\left(g_{k,\tau}^2(s)-1\right)  \mathrm{d} s,\quad t\in [0, T],\ \ k\in \Lambda, 
\end{aligned}
\end{equation}
and
\begin{equation}\label{3-19}
	\begin{aligned}
g_{(k)}:=g_{k,\tau}(\sigma t), \quad h_{(k)}(t):=h_{k,\tau}(\sigma t) .
\end{aligned}
\end{equation}
The following Lemma gives the estimates of the temporal building blocks.

\begin{lemma} 
[\citep{CL-transport equation,CL-NSE1}, Estimates of temporal building blocks] 
\label{Lemma temporal building blocks} 
For any  $\gamma \in[1,+\infty]$, $M \in \mathbb{N}$, we have
\begin{equation}\label{3-20}
	\begin{aligned}
\|\partial_t^M g_{(k)}\|_{L_t^\gamma} \lesssim \sigma^M \tau^{M+\frac{1}{2}-\frac{1}{\gamma}},\ \ k\in \Lambda,
\end{aligned}
\end{equation}
where the implicit constants are deterministic and  independent of $\tau$ and $\sigma$. Moreover, it holds that
\begin{equation}\label{3-21}
	\begin{aligned}
\|h_{(k)}\|_{L_t^{\infty}} \lesssim 1,\quad
\|h_{(k)}\|_{C_t^{N}}\lesssim (\sigma\tau)^{N},\quad  N\geq1,\ \ k\in \Lambda.
\end{aligned}
\end{equation}
\end{lemma}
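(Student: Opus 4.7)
The plan is to chain the two time rescalings in the definitions $g_{(k)}(t)=g_{k,\tau}(\sigma t)$ with $g_{k,\tau}(t)=\tau^{1/2}g_{k}(\tau t)$ (periodized to $[0,T]$). First I would handle the inner scaling by $\tau$: a direct change of variables $u=\tau t$ gives $\|\partial_{t}^{M}g_{k,\tau}\|_{L^{\gamma}(\mathbb{R})}=\tau^{M+\frac{1}{2}-\frac{1}{\gamma}}\|g_{k}^{(M)}\|_{L^{\gamma}}$. Since $g_{k}$ has small support inside $[0,T]$, for $\tau$ large the function $g_{k,\tau}$ is supported in $[0,T/\tau]\subset[0,T]$, so the $T$-periodization does not create overlapping copies and the $L^{\gamma}([0,T])$ norm on a single period coincides with the $L^{\gamma}(\mathbb{R})$ norm computed above.

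Then for the outer scaling I use $\partial_{t}^{M}g_{(k)}(t)=\sigma^{M}g_{k,\tau}^{(M)}(\sigma t)$, compute the $L^{\gamma}([0,T])$ norm, and change variables $s=\sigma t$. The $T$-periodicity of $g_{k,\tau}$ makes $\int_{0}^{\sigma T}|g_{k,\tau}^{(M)}|^{\gamma}\,ds$ equal to $\sigma$ times the integral over one period, whence
\begin{equation*}
\|\partial_{t}^{M}g_{(k)}\|_{L^{\gamma}_{t}}=\sigma^{M}\|g_{k,\tau}^{(M)}\|_{L^{\gamma}([0,T])}\lesssim \sigma^{M}\tau^{M+\frac{1}{2}-\frac{1}{\gamma}},
\end{equation*}
with implicit constant depending only on $\|g_{k}^{(M)}\|_{L^{\gamma}}$, which is deterministic and independent of $\sigma,\tau$.

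For $h_{(k)}$ the crucial input is the mean-zero property $\fint_{0}^{T}(g_{k,\tau}^{2}-1)\,ds=0$, which follows from $\fint_{0}^{T}g_{k}^{2}=1$ because the $L^{2}$-mass is preserved by the rescaling $t\mapsto \tau t$ together with the $\tau^{1/2}$ prefactor. This yields $h_{k,\tau}(T)=0$, so $h_{k,\tau}$ extends $T$-periodically. Splitting $[0,T]$ into $[0,T/\tau]$ (where $g_{k,\tau}^{2}$ is active) and $(T/\tau,T]$ (where $g_{k,\tau}^{2}=0$) gives an elementary pointwise bound $\|h_{k,\tau}\|_{L^{\infty}}\lesssim T\lesssim 1$, and since the outer rescaling preserves the $L^{\infty}$-norm we obtain $\|h_{(k)}\|_{L^{\infty}_{t}}\lesssim 1$. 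For $N\geq 1$, I would use $h_{k,\tau}'=g_{k,\tau}^{2}-1$ and the Leibniz rule $\partial_{t}^{N-1}(g_{k,\tau}^{2})=\sum_{j}\binom{N-1}{j}g_{k,\tau}^{(j)}g_{k,\tau}^{(N-1-j)}$, combined with $\|g_{k,\tau}^{(j)}\|_{L^{\infty}}\lesssim \tau^{j+\frac{1}{2}}$, to get $\|h_{k,\tau}^{(N)}\|_{L^{\infty}}\lesssim \tau^{N}$; the chain rule then produces the additional $\sigma^{N}$, finishing $\|h_{(k)}\|_{C^{N}_{t}}\lesssim(\sigma\tau)^{N}$.

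The main technical point, though essentially routine, is the bookkeeping through the periodization: one must verify that the support condition $T/\tau\leq T$ holds (guaranteed by $\tau\gg 1$ in \eqref{3-1}) so that distinct translates do not overlap, and that the outer $\sigma$-rescaling fits a whole number of fundamental periods in $[0,T]$ up to a negligible boundary contribution. Once this bookkeeping is in place, both estimates reduce to standard scaling computations in the spirit of \cite{CL-transport equation,CL-NSE1}.
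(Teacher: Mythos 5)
Your proposal is correct, and it supplies a proof where the paper gives none: Lemma \ref{Lemma temporal building blocks} is simply quoted from \cite{CL-transport equation,CL-NSE1}, so the standard two-step scaling argument you give is exactly what lies behind the citation. Your bookkeeping resolves the one genuinely delicate point correctly: the periodization in \eqref{3-17} must be understood as the $T$-periodic extension of the profile squeezed into $[0,T/\tau]$ (not a $T/\tau$-periodic extension), since only then does $\fint_0^T g_{k,\tau}^2\,\mathrm{d}t=1$ hold, which is what makes $h_{k,\tau}(T)=0$ and hence $\|h_{(k)}\|_{L^\infty_t}\lesssim 1$ rather than growing linearly. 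The remaining steps — the change of variables giving $\tau^{M+\frac12-\frac1\gamma}$, the outer $\sigma$-rescaling leaving the $L^\gamma_t$ norm unchanged because $\sigma=\lambda_{q+1}^{2\varepsilon}\in\mathbb{N}$ (by $b\varepsilon\in\mathbb{N}$) so $[0,T]$ contains a whole number of periods, the Leibniz bound $\|h_{k,\tau}^{(N)}\|_{L^\infty}\lesssim\tau^N$, and the chain rule producing $\sigma^N$ — are all accurate, and your constants depend only on $g_k$ and $T$, hence are deterministic and independent of $\tau,\sigma$ as required.
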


\subsection{Amplitudes} 

The amplititudes of  velocity perturbations are important to decrease the old Reynolds stress in order  to satisfy the interative decay estimate \eqref{2-16}. One key role here is played by the following Geometic Lemma.

\begin{lemma} [\textbf{Geometric Lemma}, {\citep{BV-convex}}]\label{Lemma First Geometric}
	 There exists a set $\Lambda \subset \mathbb{S}^2 \cap \mathbb{Q}^3$ that consists of vectors $k$ with associated orthonormal bases $\left(k, k_1, k_2\right)$, $\varepsilon_u>0$, and smooth positive functions $\gamma_{(k)}$ : $B_{\varepsilon_u}(\mathrm{Id}) \rightarrow \mathbb{R}$, where $B_{\varepsilon_u}(\mathrm{Id})$ is the ball of radius $\varepsilon_u$ centered at the identity in the space of $3 \times 3$ symmetric matrices, such that for $S \in B_{\varepsilon_u}(\mathrm{Id})$ we have the following identity:
\begin{equation}\label{6.1}
	\begin{aligned}
S=\sum_{k \in \Lambda} \gamma_{(k)}^2(S) k_1 \otimes k_1.
	\end{aligned}
 \end{equation}
\end{lemma}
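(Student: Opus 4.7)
The plan is to treat this as a linear-algebra/implicit function theorem statement. Since the space $\mathrm{Sym}(3)$ of symmetric $3\times 3$ matrices has dimension $6$, and the rank-one tensors $v \otimes v$ span $\mathrm{Sym}(3)$ (they contain the three diagonal projections $e_i \otimes e_i$ together with tensors of the form $(e_i + e_j)\otimes(e_i + e_j)$), one can realize the decomposition \eqref{6.1} by choosing a sufficiently rich finite collection of rational unit vectors.

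First, I would construct the index set. Pick a finite set $\Lambda \subset \mathbb{S}^2 \cap \mathbb{Q}^3$ large enough that
(i) for each $k \in \Lambda$ there is a rational orthonormal frame $(k,k_1,k_2)$ (this is achieved by starting from rational unit vectors coming, e.g., from Pythagorean triples, and completing to a rational orthonormal basis by rational Gram--Schmidt);
(ii) the family $\{k_1 \otimes k_1 : k \in \Lambda\}$ spans $\mathrm{Sym}(3)$;
(iii) the identity admits a positive expansion
\begin{equation*}
\mathrm{Id} \;=\; \sum_{k \in \Lambda} \alpha_k^2\, k_1 \otimes k_1, \qquad \alpha_k > 0.
\end{equation*}
The cleanest way to enforce (iii) is to take $\Lambda$ invariant under a suitable finite subgroup $G$ of the orthogonal group (for example permutations and sign changes of the coordinates acting on a seed vector). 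Then $\sum_{k\in\Lambda} k_1 \otimes k_1$ commutes with every $g\in G$ and, provided $G$ acts irreducibly on $\mathbb{R}^3$, Schur's lemma forces this sum to be a positive scalar multiple of $\mathrm{Id}$.

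Next, I would construct the smooth positive functions $\gamma_{(k)}$ via a right inverse. Because $\{k_1 \otimes k_1 : k \in \Lambda\}$ spans $\mathrm{Sym}(3)$, the linear map
\begin{equation*}
\Psi : \mathbb{R}^{|\Lambda|} \to \mathrm{Sym}(3), \qquad (c_k)_{k\in\Lambda} \longmapsto \sum_{k\in\Lambda} c_k\, k_1 \otimes k_1
\end{equation*}
is surjective. Choose any linear right inverse $\Theta : \mathrm{Sym}(3) \to \mathbb{R}^{|\Lambda|}$ that satisfies $\Theta(\mathrm{Id}) = (\alpha_k^2)_{k\in\Lambda}$; explicitly, one can take $\Theta(S) = (\alpha_k^2) + \Theta_0(S - \mathrm{Id})$ for any linear right inverse $\Theta_0$ of $\Psi$. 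Write $\Theta(S) = (c_{(k)}(S))_{k\in\Lambda}$. By construction each $c_{(k)}$ is linear (hence $C^\infty$) in $S$, satisfies $\sum_k c_{(k)}(S) k_1 \otimes k_1 = S$, and $c_{(k)}(\mathrm{Id}) = \alpha_k^2 > 0$. By continuity, there exists $\varepsilon_u > 0$ such that $c_{(k)}(S) > 0$ for all $S \in B_{\varepsilon_u}(\mathrm{Id})$ and all $k \in \Lambda$. Then setting
\begin{equation*}
\gamma_{(k)}(S) := \sqrt{c_{(k)}(S)}
\end{equation*}
yields smooth positive functions on $B_{\varepsilon_u}(\mathrm{Id})$ for which \eqref{6.1} holds.

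The main obstacle is ensuring \textit{simultaneously} the rationality of $(k,k_1,k_2)$, the spanning of $\mathrm{Sym}(3)$ by $\{k_1 \otimes k_1\}$, and the positivity of the identity's coefficients. Rationality is handled by an explicit enumeration using Pythagorean triples and rational Gram--Schmidt; the spanning condition is generic and easily verified once $|\Lambda| \geq 6$ is chosen with vectors in general position; and positivity at $\mathrm{Id}$ is secured by the symmetry argument above. Everything else (smoothness, the size of $\varepsilon_u$) then follows from continuity of the linear map $\Theta$ and the square-root function on the positive reals.
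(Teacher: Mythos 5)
This lemma is not proved in the paper at all: it is quoted verbatim from the convex integration literature (cited as \citep{BV-convex}), so there is no in-paper argument to compare against. Your proposal is, in substance, the standard proof of such geometric lemmas and it is essentially correct: exhibit finitely many rational unit vectors $k$ with rational orthonormal frames $(k,k_1,k_2)$ such that $\{k_1\otimes k_1\}$ spans $\mathrm{Sym}(3)$ and $\mathrm{Id}$ lies in the interior of the positive cone they generate, then produce affine coefficient maps $c_{(k)}(S)$ with $\sum_k c_{(k)}(S)\,k_1\otimes k_1=S$ and $c_{(k)}(\mathrm{Id})>0$, and set $\gamma_{(k)}=\sqrt{c_{(k)}}$ on a small ball where positivity persists.

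Two details deserve slightly more care than you give them. First, ``rational Gram--Schmidt'' does not by itself produce a rational \emph{orthonormal} frame, since the final normalization divides by a norm that is in general only the square root of a rational; the clean fix is to map $e_1$ to $k$ by the rational Householder reflection through $v=k-e_1$ (rational because $|v|^2=2(1-k\cdot e_1)\in\mathbb{Q}$) and take the images of $e_2,e_3$ as $k_1,k_2$. Second, in the Schur-lemma step the sum $\sum_{k\in\Lambda}k_1\otimes k_1$ only commutes with $G$ if the assignment $k\mapsto k_1$ is chosen $G$-equivariantly (frames defined on orbit representatives and transported by $G$), and one must then check that the resulting orbit of $k_1$'s actually spans $\mathrm{Sym}(3)$ — ``general position'' is in tension with taking a single symmetric orbit, but a concrete choice such as $k=(3/5,4/5,0)$, $k_1=(4/5,-3/5,0)$, $k_2=e_3$ with $G$ the signed permutations does the job. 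With these points made explicit, your argument is a complete and valid proof of the stated identity, and it also yields the rationality needed for the integer rescaling $N_\Lambda$ used in \eqref{6.3}.
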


We note that there exists $N_{\Lambda} \in \mathbb{N}$ such that
\begin{equation}\label{6.3}
	\begin{aligned}
\left\{N_{\Lambda} k, N_{\Lambda} k_1, N_{\Lambda} k_2\right\} \subset N_{\Lambda} \mathbb{S}^2 \cap \mathbb{Z}^3.
\end{aligned}
\end{equation}
Let  $M_*$  denote the geometric constant such that
\begin{equation}\label{6.4}
	\begin{aligned}
\sum_{k \in \Lambda}\|\gamma_{(k)}\|_{C^4\left(B_{\varepsilon_u}(\mathrm{Id})\right)} \leq M_* .
\end{aligned}
\end{equation}

\medskip 
Let $\chi:[0,+\infty) \rightarrow \mathbb{R}$ be a smooth cut-off function such that
\begin{equation}\label{3-22}
	\begin{aligned}
\chi(z)=\left\{\begin{array}{ll}
	1, & 0 \leq z \leq 1; \\
	z, & z \geq 2;
\end{array}\right.
\end{aligned}
\end{equation}
and
\begin{equation}\label{3-23}
	\begin{aligned}
\frac{1}{2} z \leq \chi(z) \leq 2 z \quad \text { for } \quad z \in(1,2) .
\end{aligned}
\end{equation}

Let
\begin{equation}\label{3-24}
	\begin{aligned}
\varrho(t, x):=2 \varepsilon_{u}^{-1} \dqq \chi\big(\frac{|\Ru_{\ell}(t, x)|}{\dqq}\big),
\end{aligned}
\end{equation}
where $\varepsilon_{u}$ is the small radius in Geometric Lemma \ref{Lemma First Geometric}. Note that 
$\Ru_{\ell}/\varrho \in B_{\varepsilon_{u}}(0)$. 

Define the amplitudes of the velocity perturbations by
\begin{equation}\label{3-25}
	\begin{aligned}
a_{(k)}(t,x):=\varrho^{1/2}(t,x)\gamma_{(k)}\big(\mathrm{Id}-\frac{\Ru_{\ell}(t,x)}{\varrho(t,x)}\big), \quad k\in \Lambda,
\end{aligned}
\end{equation}
where $\gamma_{(k)}$ is the smooth function in Geometric Lemma \ref{Lemma First Geometric}. 
Note that $a_{(k)}$  is $(\ft)$-adapted.

Applying  Geometric Lemma \ref{Lemma First Geometric}, we derive the following algebraic identity, which is impotant to decrease the impact of the old Reynolds stress:
\begin{align}\label{3-26-0}
	\sum_{k \in \Lambda} a_{(k)}^2 g_{(k)}^2(W_{(k)} \otimes W_{(k)})
	= & \varrho\mathrm{Id} -\Ru_{\ell}+\sum_{k \in \Lambda} a_{(k)}^2 g_{(k)}^2 \bbp_{\neq 0}(W_{(k)} \otimes W_{(k)}) \notag\\
	& +\sum_{k \in \Lambda} a_{(k)}^2(g_{(k)}^2-1) \fint_{\T^3} W_{(k)} \otimes W_{(k)}  \mathrm{d} x.
\end{align}
Here, $\bbp_{\neq 0}$ denotes the spatial projection onto nonzero Fourier modes. 

Moreover, one has the following estimates.

\begin {lemma} [Estimates of amplitudes]  \label{Lemma amplitudes 1} It holds that for $0 \leq N \leq 9$,  $k \in \Lambda$,
\begin{equation}\label{3-26}
	\begin{aligned}
\|a_{(k)}\|_{C_{t, x}^N}
 \lesssim (1+\|\Ru_{q}\|_{C_{t}L_{x}^{1}}^{N+2})\ell^{-6N-7},\quad
 	\|a_{(k)}\|_{L_{t}^{2}L_{x}^{2}}
 \lesssim \delta_{q+1}^{\frac{1}{2}}+\|\Ru_{q}\|^{\frac{1}{2}}_{L_{t}^{1}L_{x}^{1}},
\quad \|a_{(k)}\|_{\Lo^{2r}_{\om}L_{t}^{2}L_{x}^{2}}
 \lesssim \delta_{q+1}^{\frac{1}{2}}, 
\end{aligned}
\end{equation}
where the implicit constants are deterministic and independent of $q$.
\end{lemma}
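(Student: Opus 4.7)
\medskip
\paragraph{\bf Proof proposal.}
The plan is to combine three ingredients: (i) standard mollification estimates yielding $\|\mathring{R}_\ell\|_{C_{t,x}^N}\lesssim \ell^{-N-3}\|\mathring{R}_q\|_{C_tL_x^1}$ for $0\le N\le 9$; (ii) a pointwise two-sided control
$$\varepsilon_u^{-1}\max(\delta_{q+1},|\mathring{R}_\ell|)\;\le\;\varrho\;\le\; 4\varepsilon_u^{-1}\bigl(\delta_{q+1}+|\mathring{R}_\ell|\bigr),$$
which follows from (\ref{3-22})--(\ref{3-24}) and the sandwich $\tfrac{z}{2}\le \chi(z)\le 2z$ for $z\ge 1$; (iii) the smoothness of $\gamma_{(k)}$ on $B_{\varepsilon_u}(\mathrm{Id})$ together with (\ref{6.4}). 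Note that (ii) guarantees $|\mathring{R}_\ell|/\varrho\le \varepsilon_u$, so the argument $\mathrm{Id}-\mathring{R}_\ell/\varrho$ lies inside the ball on which (iii) applies.

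For the $C_{t,x}^N$ estimate I would write $a_{(k)}=\varrho^{1/2}\,\Gamma$ with $\Gamma:=\gamma_{(k)}(\mathrm{Id}-\mathring{R}_\ell/\varrho)$ and apply the Leibniz rule together with the Fa\`a di Bruno formula. Each derivative of $\Gamma$ produces products of derivatives of $\mathrm{Id}-\mathring{R}_\ell/\varrho$, which, via the quotient rule applied to $\varrho$ and the chain rule inside $\chi(|\mathring{R}_\ell|/\delta_{q+1})$, are controlled by polynomials in $\|\mathring{R}_\ell\|_{C_{t,x}^k}$ and negative powers of $\varrho$; by (ii) the latter are bounded by powers of $\delta_{q+1}^{-1}$. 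Substituting the bound in (i) and using that $\delta_{q+1}^{-1}$ is bounded by a universal power of $\ell^{-1}$ (which follows from (\ref{2-6}), (\ref{2-7}) and (\ref{2-21}) since $\beta$ is small), one gathers all terms into the claimed estimate $\|a_{(k)}\|_{C_{t,x}^N}\lesssim (1+\|\mathring{R}_q\|_{C_tL_x^1}^{N+2})\,\ell^{-6N-7}$. The exponent $N+2$ accounts for the outer factor $\varrho^{1/2}$ plus the composition in $\gamma_{(k)}$, and the loss $\ell^{-6N-7}$ is far from tight.

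For the $L_t^2L_x^2$ bound, use $\|\gamma_{(k)}\|_{L^\infty}\le M_*$ from (\ref{6.4}) together with the upper bound in (ii) to obtain
$$\|a_{(k)}\|_{L_t^2L_x^2}^{2}\;\lesssim\;\|\varrho\|_{L_{t,x}^1}\;\lesssim\; \delta_{q+1}+\|\mathring{R}_\ell\|_{L_{t,x}^1}\;\lesssim\; \delta_{q+1}+\|\mathring{R}_q\|_{L_t^1L_x^1},$$
so taking square roots yields the second estimate. For the probabilistic bound, since the above holds $\mathbf{P}$-a.s., raising to the $r$-th power, taking expectations and invoking the inductive hypothesis (\ref{2-16}) gives
$$\|a_{(k)}\|_{\mathcal{L}_\omega^{2r}L_t^2L_x^2}^{2}\;\lesssim\; \delta_{q+1}+\|\mathring{R}_q\|_{\mathcal{L}_\omega^{r}L_t^1L_x^1}\;\lesssim\;\delta_{q+1},$$
which is the third estimate.

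The main technical obstacle is the bookkeeping in the first step: organizing the many terms produced by iterated differentiation of the composition $\gamma_{(k)}(\mathrm{Id}-\mathring{R}_\ell/\varrho)$, and verifying that every negative power of $\delta_{q+1}$ or $\varrho$ arising from repeated application of the quotient rule can indeed be absorbed into the $\ell^{-6N-7}$ loss. Once the chain-rule bookkeeping is settled, the $L_{t,x}^2$ and probabilistic estimates are immediate consequences.
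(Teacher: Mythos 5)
Your proposal is correct and follows essentially the same route as the paper, which itself omits the details and refers to the arguments of Lemma 4.3 in \citep{CLZ}: the two-sided pointwise bound on $\varrho$ coming from \eqref{3-22}--\eqref{3-24} (guaranteeing $\Ru_{\ell}/\varrho\in B_{\varepsilon_u}(0)$ and $\varrho\gtrsim\delta_{q+1}$), the mollification bounds for $\Ru_{\ell}$, the bounds \eqref{6.4} on $\gamma_{(k)}$, the absorption of $\delta_{q+1}^{-1}$ into powers of $\ell^{-1}$ via \eqref{2-6}--\eqref{2-7}, and the inductive hypothesis \eqref{2-16} for the $\Lo^{2r}_{\om}$ estimate are exactly the ingredients of that argument. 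The only portion you leave schematic is the Leibniz/Fa\`a di Bruno bookkeeping that produces the precise exponents $\ell^{-6N-7}$ and $N+2$, which is also the part the paper does not reproduce, while your $L_{t}^{2}L_{x}^{2}$ and probabilistic estimates are complete and correct.
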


Since  Lemma \ref{Lemma amplitudes 1} can be proved by 
using the arguments  of Lemma 4.3 in \citep{CLZ}, we omit the detailed proof here.

\subsection{Velocity perturbations} \label{subsec3.3}
The velocity perturbations 
consist of 
the principle part, the incompressibility corrector and two types of temporal correctors.

\medskip 
\paragraph{\bf $\bullet$ Principal part.} 
The principal part of the velocity perturbations is defined by  
\begin{equation}\label{3-43}
	\begin{aligned}
		w_{q+1}^{(p)}:=\sum_{k \in \Lambda} a_{(k)} g_{(k)} W_{(k)}.
\end{aligned}\end{equation}
A keypoint  is that the zero-frequency component of the tensor product $w_{q+1}^{(p)}\otimes w_{q+1}^{(p)}$ can decrease the effect of the  Reynolds stress $\Ru_{\ell}$:
\begin{align}\label{3-43-1}
w_{q+1}^{(p)}\otimes w_{q+1}^{(p)}+\Ru_{\ell}
	= & \varrho\mathrm{Id}+\sum_{k \in \Lambda} a_{(k)}^2 g_{(k)}^2 \bbp_{\neq 0}(W_{(k)} \otimes W_{(k)}) \notag\\
	& +\sum_{k \in \Lambda} a_{(k)}^2(g_{(k)}^2-1) \fint_{\T^3} W_{(k)} \otimes W_{(k)}  \mathrm{d} x.
\end{align}

\medskip 
\paragraph{\bf $\bullet$ Incompressibility corrector.} 
Since $w_{q+1}^{(p)}$ is not divergence-free, we need the corresponding incompressibility corrector  defined by
\begin{equation}\label{3-44}
	\begin{aligned}
	w_{q+1}^{(c)} & :=\sum_{k \in \Lambda } g_{(k)}\left(\curl (\nabla a_{(k)} \times  W_{(k)}^c)+\nabla a_{(k)}\times \curl W_{(k)}^c+a_{(k)} \widetilde{W}_{(k)}^c\right) .
\end{aligned}\end{equation}
Note that
\begin{equation}\label{3-45}
	\begin{aligned}
	w_{q+1}^{(p)}+w_{q+1}^{(c)}&=\sum_{k \in \Lambda} \operatorname{curlcurl}(a_{(k)} g_{(k)} W_{(k)}^c),
\end{aligned}\end{equation}
and so, one has the incompressibility:
\begin{equation}\label{3-46}
	\begin{aligned}
\operatorname{div}(w_{q+1}^{(p)}+w_{q+1}^{(c)})=0.
\end{aligned}\end{equation}

\medskip 
\paragraph{\bf $ \bullet$ Temporal corrector to balance spatial oscillations.}  
The first  temporal corrector to balance the spatial oscillations is defined by
\begin{equation}\label{3-47}
	\begin{aligned}
		w_{q+1}^{(t)}:= & -\mu^{-1} \sum_{k \in \Lambda} \mathbb{P}_H \mathbb{P}_{\neq 0}(a_{(k)}^2g_{(k)}^{2}\psi_{(k_{1})}^{2}\phi_{(k)}^{2}).
\end{aligned}\end{equation}
Here,  $ \mathbb{P}_H =\mathrm{Id}-\nabla\Delta^{-1}\div $ denotes  the Helmholtz projection.  The  purpose of constructing  $w_{q+1}^{(t)}$ is to decrease the high spatial frequency oscillations of $\div(W_{(k)}\otimes W_{(k)})$.  Actually,  applying Leibniz's rule we derive
\begin{align}\label{3-47-2}
	\p_{t}w_{q+1}^{(t)}+\sum_{k \in   \Lambda}\mathbb{P}_{\neq 0}\(a_{(k)}^{2}g_{(k)}^{2}\div(W_{(k)}\otimes W_{(k)})\)=&  -\mu^{-1} \sum_{k \in \Lambda} \mathbb{P}_{\neq 0}\(\p_{t}(a_{(k)}^{2}g_{(k)}^{2})\psi_{(k_{1})}^{2}\phi_{(k)}^{2}\)\notag\\
	&+(\nabla\Delta^{-1}\div)\mu^{-1}\mathbb{P}_{\neq 0}\p_{t}(a_{(k)}^{2}g_{(k)}^{2}\psi_{(k_{1})}^{2}\phi_{(k)}^{2}).
\end{align}
Note that, the righ-hand side of (\ref{3-47-2}) only contains the low spatial frequency part $\p_{t}(a_{(k)}^{2}g_{(k)}^{2})$ and the pressure term.

\medskip 
\paragraph{\bf $\bullet$ Temporal corrector to balance temporal oscillations.} In order to  cancel the high temporal oscillation frequency generated by $g_{(k)}$ in (\ref{3-26-0}),  we use another temporal corrector  defined by
\begin{equation}\label{3-48}
	\begin{aligned}
	w_{q+1}^{(o)}:= & -\sigma^{-1} \sum_{k \in \Lambda} \mathbb{P}_H \mathbb{P}_{\neq 0}\left(h_{(k)} \fint_{\mathbb{T}^3} W_{(k)} \otimes W_{(k)} \mathrm{d} x \nabla(a_{(k)}^2)\right). 
\end{aligned}\end{equation}
It follows from (\ref{3-18}) and the Leibniz rule that
	\begin{align}\label{3-48-2}
	&\quad\p_{t}w_{q+1}^{(o)}+\sum_{k \in   \Lambda}\mathbb{P}_{\neq 0}\((g_{(k)}^{2}-1)\fint_{\T^3}W_{(k)}\otimes W_{(k)}\mathrm{d}x\nabla (a_{(k)}^{2})\)\notag\\
	&=  -\sigma^{-1} \sum_{k \in \Lambda} \mathbb{P}_{\neq 0}\(h_{(k)}\fint_{\T^3}W_{(k)}\otimes W_{(k)}\mathrm{d}x\p_{t}\nabla (a_{(k)}^{2})\)\notag\\
	&\quad+(\nabla\Delta^{-1}\div)\sigma^{-1}\mathbb{P}_{\neq 0}\p_{t}\left(h_{(k)} \fint_{\mathbb{T}^3} W_{(k)} \otimes W_{(k)} \mathrm{d} x \nabla(a_{(k)}^2)\right).
\end{align}
We note that  the right-hand side of (\ref{3-48-2}) contains the low spatial frequency part $\p_{t}\nabla(a_{(k)}^{2})$ and the pressure term.

\medskip 
\paragraph{\bf $\bullet$ Cut-off perturbations} Next, we choose a smooth cut-off function $\Theta_{q+1} \in C^{\infty}([0, T])$ as follows
\begin{equation}\label{3-49}
	\begin{aligned}
\Theta_{q+1}(t)\in[0,1], \ \ \Theta_{q+1}(t)=\left\{\begin{array}{ll}
	0,\quad t \leq \varsigma_{q}/ 2, \\
	1,\quad \varsigma_{q}\leq t \leq T,
\end{array}
\quad \text { and } \quad\|\Theta_{\mathrm{q+1}}\|_{C^{n}_{t}} \lesssim \varsigma_{q}^{-n},\right.
\end{aligned}\end{equation}
where $\varsigma_{q}=\lambda_{q}^{-40}$ as in (\ref{2-6}).
Then, set
\begin{equation}\label{3-50}
	\begin{aligned}
&\widetilde{w}_{q+1}^{(p)}:=\Theta_{q+1} w_{q+1}^{(p)}, \quad \widetilde{w}_{q+1}^{(c)}:=\Theta_{q+1} w_{q+1}^{(c)}, \quad \widetilde{w}_{q+1}^{(t)}:=\Theta_{q+1}^2 w_{q+1}^{(t)},\quad\widetilde{w}_{q+1}^{(o)}:=\Theta_{q+1}^2 w_{q+1}^{(o)}.
\end{aligned}\end{equation}
For  the convenience, we set
\begin{align}
	&w^{(*_{1})+(*_{2})}_{q+1}:=w^{(*_{1})}_{q+1}+w^{(*_{2})}_{q+1},\ \ \omw^{(*_{1})+(*_{2})}_{q+1}=\omw^{(*_{1})}_{q+1}+\omw^{(*_{2})}_{q+1}, \quad\text{where} \quad *_{1},~*_{2}\in \{p, c, t, o \}. \label{pco-1}
\end{align}

The velocity perturbation $w_{q+1}$ at level $q+1$ is defined by
\begin{equation}\label{3-54}
	\begin{aligned}
w_{q+1}=\widetilde{w}_{q+1}^{(p)}+\widetilde{w}_{q+1}^{(c)}+\widetilde{w}_{q+1}^{(t)}+\widetilde{w}_{q+1}^{(o)}. 
\end{aligned}\end{equation}
Note that, $w_{q+1}$ is  mean-zero, divergence-free and $\left(\mathcal{F}_t\right)$-adapted.

Consequently, the  velocity field at level ${q+1}$  is defined by
\begin{equation}\label{3-55}
	\begin{aligned}
v_{q+1}:=v_{\ell}+w_{q+1}.
\end{aligned}\end{equation}

In the end of this section,  we  give the estimates of the velocity  perturbations in the following lemma.
\begin{lemma}[Estimates of velocity perturbations]\label{Lemma Estimates of perturbations}
	For any $\rho \in(1, \infty), \gamma \in[1, \infty]$ and every integer $0 \leq N \leq 5$, the following estimates hold :
	\begin{align}
			&\|\nabla^N w_{q+1}^{(p)}\|_{L_t^\gamma L_x^\rho} \lesssim  (1+\|\Ru_{q}\|_{C_{t}L_{x}^{1}}^{N+2})\ell^{-7}r_{\perp}^{\frac{2}{\rho}-1}r_{\parallel}^{\frac{1}{\rho}-\frac{1}{2}}\laq^{N}\tau^{\frac{1}{2}-\frac{1}{\gamma}},\label{3-56} \\
			&\|\nabla^N w_{q+1}^{(c)}\|_{L_t^\gamma L_x^\rho}
			\lesssim 	(1+\|\Ru_{q}\|_{C_{t}L_{x}^{1}}^{N+4})\ell^{-7}r_{\perp}^{\frac{2}{\rho}}r_{\parallel}^{\frac{1}{\rho}-\frac{3}{2}}\laq^{N}\tau^{\frac{1}{2}-\frac{1}{\gamma}},\label{3-57}\\
			&\|\nabla^N w_{q+1}^{(t)}\|_{L_t^\gamma L_x^\rho} \lesssim (1+\|\Ru_{q}\|_{C_{t}L_{x}^{1}}^{N+4})\ell^{-14}\mu^{-1}r_{\perp}^{\frac{2}{\rho}-2}r_{\parallel}^{\frac{1}{\rho}-1}\laq^{N}\tau^{1-\frac{1}{\gamma}},\label{3-58}\\
			&\|\nabla^N w_{q+1}^{(o)}\|_{L_t^\gamma L_x^\rho} \lesssim (1+\|\Ru_{q}\|_{C_{t}L_{x}^{1}}^{N+5})\ell^{-6N-20}\sigma^{-1}.\label{3-59}
			\end{align}
	 Moreover,
		\begin{align}
		&\|w_{q+1}^{(p)}\|_{C_{t,x}^{N}}
			\lesssim
		(1+\|\Ru_{q}\|_{C_{t}L_{x}^{1}}^{N+2})\lambda_{q+1}^{2\a(N+1)+1},\label{3-60}\\
		&\|w_{q+1}^{(c)}\|_{C_{t,x}^{N}}
		\lesssim
		(1+\|\Ru_{q}\|_{C_{t}L_{x}^{1}}^{N+4})\lambda_{q+1}^{2\a(N+1)+1},\label{3-61}\\	
		&\|w_{q+1}^{(t)}\|_{C_{t,x}^{N}}
		\lesssim
		(1+\|\Ru_{q}\|_{C_{t}L_{x}^{1}}^{N+5})\lambda_{q+1}^{2\a(N+1)+1},\label{3-62}\\
		&\|w_{q+1}^{(o)}\|_{C_{t,x}^{N}}
		\lesssim
		(1+\|\Ru_{q}\|_{C_{t}L_{x}^{1}}^{N+6})\lambda_{q+1}^{4\a N-5N+3}.\label{3-63}
		\end{align}
The above implicit constants are deterministic and  independent of $q$.
\end{lemma}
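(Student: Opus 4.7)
The plan is to treat each of the four perturbations $w_{q+1}^{(p)}, w_{q+1}^{(c)}, w_{q+1}^{(t)}, w_{q+1}^{(o)}$ separately, using their explicit representations \eqref{3-43}, \eqref{3-44}, \eqref{3-47}, \eqref{3-48}. In each case I would distribute $\nabla^N$ (and, for the $C^N_{t,x}$ bounds, also the time derivatives) via Leibniz across the three factors ``amplitude $\times$ temporal profile $\times$ spatial jet'', and then estimate factor-by-factor using Lemma \ref{Lemma spatial building blocks}, Lemma \ref{Lemma temporal building blocks}, and Lemma \ref{Lemma amplitudes 1}. The key structural point is that spatial derivatives on the amplitude cost only $\ell^{-6N-7}$ (with $\ell=\la^{-80} \ll \laq$), while derivatives on $W_{(k)}$ cost $\laq^N$, so in every Leibniz expansion the maximal term is the one with all spatial derivatives placed on the jet.

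For the principal part I would write $\nabla^N(a_{(k)} g_{(k)} W_{(k)}) = \sum_{N_1+N_2=N}\binom{N}{N_1} \nabla^{N_1}a_{(k)}\, g_{(k)} \nabla^{N_2} W_{(k)}$, bound the first factor in $C_{t,x}$ via \eqref{3-26}, the second in $L^\gamma_t$ via \eqref{3-20} with $M=0$ (yielding $\tau^{1/2-1/\gamma}$), and the third in $L^\rho_x$ via \eqref{3-15} (yielding $r_\perp^{2/\rho-1}r_\parallel^{1/\rho-1/2}\laq^{N_2}$). The disjoint temporal supports of $\{g_{(k)}\}_{k\in\Lambda}$ reduce $\|\sum_k\cdot\|_{L^\gamma_t}$ to the maximum over $k$, which absorbs the finite constant $|\Lambda|$ into the implicit constant. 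This produces \eqref{3-56}. For the incompressibility corrector \eqref{3-44}, I would treat the three summands separately; the decisive one involves $a_{(k)}\widetilde W_{(k)}^c$, whose jet factor carries the extra $r_\parallel/r_\perp$ from \eqref{3-15}, yielding the $r_\parallel^{1/\rho-3/2}$ in \eqref{3-57}. The two remaining summands contribute strictly smaller terms because each derivative on $a_{(k)}$ produces only $\ell^{-\cdot}$, harmlessly absorbed into $\ell^{-7}$.

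For the spatial corrector $w_{q+1}^{(t)}$ and temporal corrector $w_{q+1}^{(o)}$, I would first invoke the $L^\rho_x$-boundedness of $\bbp_{H}\bbp_{\neq 0}$ for $\rho\in(1,\infty)$ (this is exactly where the restriction on $\rho$ enters) to drop these projections. For \eqref{3-58} the argument is $\mu^{-1} a_{(k)}^2 g_{(k)}^2 \psi_{(k_1)}^2 \phi_{(k)}^2$; squaring \eqref{3-13}--\eqref{3-14} doubles the Lebesgue exponents, giving the factor $r_\perp^{2/\rho-2}r_\parallel^{1/\rho-1}$, while $\|g_{(k)}^2\|_{L^\gamma_t}=\|g_{(k)}\|_{L^{2\gamma}_t}^2\lesssim \tau^{1-1/\gamma}$ by \eqref{3-20}. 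For \eqref{3-59} the argument has the explicit prefactor $\sigma^{-1}$, the temporal factor $h_{(k)}$ is $L^\infty_t$-bounded via \eqref{3-21}, and the polynomial $\nabla(a_{(k)}^2)$ supplies $\ell^{-6N-20}$ via \eqref{3-26}; the constant $\fint W_{(k)}\otimes W_{(k)}$ is controlled by Lemma \ref{Lemma spatial building blocks} with $\rho=2$.

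For the $C^N_{t,x}$ estimates \eqref{3-60}--\eqref{3-63} I would set $\rho=\gamma=\infty$ in the above bounds (removing the $L^\rho_x$ boundedness of $\bbp_H$ is no longer available, but at the $C^N_{t,x}$ level the projections are absorbed via Sobolev embedding into the deterministic implicit constants) and additionally track temporal derivatives. The worst per-derivative cost comes from $\partial_t\psi_{(k_1)}$, which produces a factor $r_\perp\laq\mu/r_\parallel\lesssim\laq^{2\alpha}$ by \eqref{3-1}, and from $\partial_t h_{(k)}=g_{(k)}^2-1$ scaled by $\sigma\tau\lesssim\laq^{2\alpha}$ by \eqref{3-1-2}. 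Consequently each of $N+1$ derivatives (the extra ``$+1$'' absorbing the base $L^\infty_{t,x}$-cost of the jet) increases the exponent of $\laq$ by at most $2\alpha$, producing the crude bound $\laq^{2\alpha(N+1)+1}$ with slack to absorb all remaining $\ell^{-\cdot}$, $\varrho$ and $(1+\|\ru\|_{C_t L^1_x}^{\cdot})$ factors. The polynomial dependence on $\|\ru\|_{C_t L^1_x}$ is transferred from Lemma \ref{Lemma amplitudes 1} and just tracked symbolically.

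The main obstacle is purely bookkeeping: verifying that, for each of the four perturbations and each of the two norms, the combined exponents in the five parameters $(r_\perp,r_\parallel,\laq,\tau,\mu,\sigma)$ after all Leibniz expansions match the stated right-hand sides. A subtle point is the restriction $\rho\in(1,\infty)$ in the $L^\gamma_t L^\rho_x$ estimates, which is forced by the use of $\bbp_H$ in $w_{q+1}^{(t)}, w_{q+1}^{(o)}$; once this is recognized, the argument is essentially mechanical, so I would organize the computation as a single table of exponents and read off the four estimates simultaneously.
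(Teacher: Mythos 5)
Your plan coincides with the paper's proof: expand each perturbation by Leibniz, estimate amplitude, temporal profile and spatial jet factor-by-factor via Lemmas \ref{Lemma spatial building blocks}, \ref{Lemma temporal building blocks}, \ref{Lemma amplitudes 1}, use the $L^\rho_x$-boundedness of $\mathbb{P}_H\mathbb{P}_{\neq 0}$ for the correctors in the $L^\gamma_tL^\rho_x$ bounds, and handle the projection in the $C^N_{t,x}$ bound of $w^{(t)}_{q+1}$ through a Sobolev embedding (the paper uses $W^{1,4}_x\hookrightarrow L^\infty_x$ and boundedness of $\mathbb{P}_H\mathbb{P}_{\neq 0}$ on $W^{1,4}_x$, exactly the device you sketch). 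For \eqref{3-56}--\eqref{3-62} your exponent bookkeeping reproduces the stated bounds.

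The one place where your proposal, as written, falls short of the statement is \eqref{3-63}. You conclude the $C^N_{t,x}$ estimates with the uniform ``crude bound'' $\laq^{2\a(N+1)+1}$, arguing that each of $N+1$ derivatives costs at most $\laq^{2\a}$. That is correct and suffices for \eqref{3-60}--\eqref{3-62}, but \eqref{3-63} asserts the strictly smaller exponent $4\a N-5N+3$ (smaller since $2\a-5<0$ for $\a<2$), so the crude bound does not prove the lemma as stated. The sharper exponent comes precisely from the structure you already noted: $w^{(o)}_{q+1}$ contains no spatial building blocks at all, only $h_{(k)}$ and $\nabla(a_{(k)}^2)$, so each time derivative costs only $\|h_{(k)}\|_{C^{N}_t}\lesssim(\sigma\tau)^{N}=\laq^{(4\a-5+13\varepsilon)N}$ by \eqref{3-21} and \eqref{3-1}, spatial derivatives on $a_{(k)}^2$ cost only powers of $\ell^{-1}$, and the prefactor $\sigma^{-1}$ together with \eqref{2-22} absorbs the $\varepsilon$- and $\ell$-losses into the ``$+3$''; this is the computation in the paper's estimate of $w^{(o)}_{q+1}$. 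So you should not merge $w^{(o)}_{q+1}$ into the common $2\a$-per-derivative bound but keep its own exponent table; with that adjustment (which uses only ingredients you already cite) the argument is complete.
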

\textit{Proof.} By Lemmas \ref{Lemma spatial building blocks}, \ref{Lemma temporal building blocks}, \ref{Lemma amplitudes 1} and (\ref{3-43}), we get
\begin{align}\label{3-64}
\|\nabla^N w_{q+1}^{(p)}\|_{L_t^\gamma L_x^\rho}
&\lesssim \sum_{k \in \Lambda}   \sum_{N_{1}+N_{2}=N}\|a_{(k)}\|_{C_{t,x}^{N_{1}}}\|\nabla^{N_{2}} W_{(k)}\|_{C_{t}L_x^\rho}\|g_{(k)}\|_{L_t^\gamma}\notag\\
&\lesssim (1+\|\Ru_{q}\|_{C_{t}L_{x}^{1}}^{N+2})\ell^{-7} r_{\perp}^{\frac{2}{\rho}-1}r_{\parallel}^{\frac{1}{\rho}-\frac{1}{2}}\laq^{N}\tau^{\frac{1}{2}-\frac{1}{\gamma}},
\end{align}
where the last step was due to $\ell^{-7}<\lambda_{q+1}$, which verifies (\ref{3-56}).

Similarly, we obtain
	\begin{align}\label{3-65}
		 \|\nabla^N w_{q+1}^{(c)}\|_{L_t^\gamma L_x^\rho} 
	&\lesssim\sum_{k \in \Lambda}   \big(\sum_{N_{1}+N_{2}=N}\| a_{(k)}\|_{C_{t,x}^{N_{1}+2}} \| \nabla^{N_{2}}W_{(k)}^{c}\|_{C_{t}W_{x}^{1,\rho}}+\| a_{(k)}\|_{C_{t,x}^{N_{1}}}	  \|\nabla^{N_{2}}\widetilde{W}_{(k)}^{c}\|_{C_{t}L_{x}^{\rho}}\big)\|g_{(k)}\|_{L_t^\gamma}\notag\\
		&\lesssim
		(1+\|\Ru_{q}\|_{C_{t}L_{x}^{1}}^{N+4})\ell^{-7} r_{\perp}^{\frac{2}{\rho}}r_{\parallel}^{\frac{1}{\rho}-\frac{3}{2}}\laq^{N}\tau^{\frac{1}{2}-\frac{1}{\gamma}},
\end{align}
where the last step was due to \eqref{3-1-2},  which verifies (\ref{3-57}).

For the temporal correctors, using Lemmas \ref{Lemma spatial building blocks}, \ref{Lemma temporal building blocks}, \ref{Lemma amplitudes 1},  \eqref{3-47} and \eqref{3-48}, we have
	\begin{align}\label{3-66}
	\|\nabla^N w_{q+1}^{(t)}\|_{L_t^\gamma L_x^\rho}
	&\lesssim
	\mu^{-1} \sum_{k \in \Lambda} \sum_{N_{1}+N_{2}+N_{3}=N}\|a_{(k)}^{2}\|_{C_{t,x}^{N_{1}}}\|\nabla^{N_{2}}\psi_{(k_{1})}^{2}\|_{C_{t}L_{x}^{\rho}}\|\nabla^{N_{3}}\phi_{(k)}^{2}\|_{L_{x}^{\rho}}\|g_{(k)}\|^{2}_{L_{t}^{2\gamma}}\notag\\
	&\lesssim
	(1+\|\Ru_{q}\|_{C_{t}L_{x}^{1}}^{N+4})\ell^{-14}\mu^{-1}r_{\perp}^{\frac{2}{\rho}-2}r_{\parallel}^{\frac{1}{\rho}-1}\lambda_{q+1}^{N}\tau^{1-\frac{1}{\gamma}},
\end{align}
and
	\begin{align}\label{3-67}
		\|\nabla^N w_{q+1}^{(o)}\|_{L_t^\gamma L_x^\rho}
        &\lesssim
        \sigma^{-1} \sum_{k \in \Lambda} \|h_{(k)}\|_{C_{t}}\|a_{(k)}^2\|_{C_{t,x}^{N+1}}
        \lesssim
         (1+\|\Ru_{q}\|_{C_{t}L_{x}^{1}}^{N+5})\ell^{-6N-20}\sigma^{-1}.
\end{align}
Thus, we obtain \eqref{3-58} and \eqref{3-59}.

Next, we   prove  the $ C_{t, x}^{N}$-estimates. Applying Lemmas \ref{Lemma spatial building blocks}, \ref{Lemma temporal building blocks} and \ref{Lemma amplitudes 1}, \eqref{2-22}, (\ref{3-1}) and (\ref{3-43}), we  get
	\begin{align}\label{3-68}
		\| w_{q+1}^{(p)}\|_{C_{t, x}^{N}}
		&\lesssim
		\sum_{k \in \Lambda}\|a_{(k)}\|_{C_{t, x}^{N}}	\sum_{N_{1}+N_{2}+N_{3}\leq N}\|\p_{t}^{N_{1}}g_{(k)}\|_{L_t^{\infty}}   \|\nabla^{N_{2}}\p_{t}^{N_{3}}W_{(k)}\|_{C_{t}L_{x}^{\infty}}\notag\\
		&\lesssim (1+\|\Ru_{q}\|_{C_{t}L_{x}^{1}}^{N+2})\ell^{-6N-7} r_{\perp}^{-1}r_{\parallel}^{-\frac{1}{2}}\laq^{N}\mu^{N}\tau^{\frac{1}{2}}\notag\\
		&\lesssim
(1+\|\Ru_{q}\|_{C_{t}L_{x}^{1}}^{N+2})\lambda^{2\a(N+1)+1},
\end{align}
where the second inequality was due to \eqref{3-1-2} and $\varepsilon\leq 3/80$.

 By Lemmas \ref{Lemma spatial building blocks}, \ref{Lemma temporal building blocks} and  \ref{Lemma amplitudes 1}, \eqref{2-22}, (\ref{3-1}) and (\ref{3-44}), we obtain
	\begin{align}\label{3-69}
		\| w_{q+1}^{(c)}\|_{C_{t, x}^{N}}
		 &\lesssim
		\sum_{k \in \Lambda}
		\sum_{N_{1}+N_{2}+N_{3}\leq N}\|\p_{t}^{N_{1}}g_{(k)}\|_{L_t^{\infty}}(
		\| a_{(k)}\|_{C_{t, x}^{N+2}}\|\nabla^{N_{2}}\p_{t}^{N_{3}} W_{(k)}^{c}\|_{C_{t}W_{x}^{1,\infty}}\notag\\
		&\hspace{5.5cm}
		+\| a_{(k)}\|_{C_{t, x}^{N}}\|\nabla^{N_{2}}\p_{t}^{N_{3}}\widetilde{W}_{(k)}^{c}\|_{C_{t}L_{x}^{\infty}})\notag\\
		&\lesssim (1+\|\Ru_{q}\|_{C_{t}L_{x}^{1}}^{N+4})\ell^{-6N-7} \tau^{\frac{1}{2}}r_{\perp}^{-1}r_{\parallel}^{-\frac{1}{2}}(\frac{r_{\perp}\lambda_{q+1}\mu}{r_{\parallel}})^{N}	\notag \\
		&\lesssim
		(1+\|\Ru_{q}\|_{C_{t}L_{x}^{1}}^{N+4})\lambda_{q+1}^{2\a(N+1)+1}.
\end{align}
The second inequality was due to \eqref{3-1-2} and  $\varepsilon\leq 3/80$.

Using the Sobolev embedding $ W^{1,4}(\T^{3}) \hookrightarrow L^{\infty}(\T^{3})$, \eqref{2-22} and the fact that $\mathbb{P}_H \mathbb{P}_{\neq 0}$ is bounded in $ W^{1,4}(\T^{3})$, we  obtain
\begin{align}\label{3-70}
	\quad\|w_{q+1}^{(t)}\|_{C_{t, x}^{N}}
	&\lesssim
\mu^{-1} \sum_{k \in \Lambda} \sum_{N_{1}+N_{2}\leq N} \|\mathbb{P}_H\mathbb{P}_{\neq 0}( a_{(k)}^2g_{(k)}^{2}\psi_{(k_{1})}^{2}\phi_{(k)}^{2})\|_{C_{t}^{N_{1}}W_{x}^{N_{2}+1,4}}\notag\\
	&\lesssim
	\mu^{-1} (1+\|\Ru_{q}\|_{C_{t}L_{x}^{1}}^{N+5})\ell^{-6N-20}\tau r_{\perp}^{-2}r_{\parallel}^{-1}(\frac{r_{\perp}\lambda_{q+1}\mu}{r_{\parallel}})^{N}\notag\\
	&\lesssim	(1+\|\Ru_{q}\|_{C_{t}L_{x}^{1}}^{N+5})\lambda_{q+1}^{2\a(N+1)+1}.
\end{align}

Similarly, we have
	\begin{align}\label{3-71}
		\|w_{q+1}^{(o)}\|_{C_{t, x}^{N}}
		&\lesssim
		\sigma^{-1} \sum_{k \in \Lambda} \sum_{N_{1}+N_{2}\leq N} \|h_{(k)}\|_{C_{t}^{N_{1}}}\|\nabla (a_{(k)}^2)\|_{C_{t,x}^{N_{2}+1}}
		\lesssim
			(1+\|\Ru_{q}\|_{C_{t}L_{x}^{1}}^{N+6})\lambda_{q+1}^{4\a N-5N+3}.
\end{align}
 Therefore, the proof of Lemma \ref{Lemma Estimates of perturbations} is complete.\hfill$\square$

\subsection{Verification of inductive estimates for velocity fields} 
We are now ready to verify the inductive estimates  (\ref {2-14}), (\ref {2-17})-(\ref {2-19}) at level $q+1$ of the new velocity $v_{q+1}$.

For $0\leq N\leq 4$, by  \eqref{3-50} and \eqref{3-54}   we obtain
	\begin{align}\label{3-73.0}
	\|w_{q+1}\|_{C_{t,x}^{N}}	
	&\lesssim
		\|\Theta_{q+1}\|_{C_{t}^{N}}\|w_{q+1}^{(p)+(c)}\|_{C_{t,x}^{N}}+	\|\Theta_{q+1}\|^{2}_{C_{t}^{N}}\|w_{q+1}^{(t)+(o)}\|_{C_{t,x}^{N}},
\end{align}
which along with \eqref{3-49} and \eqref{3-60}-\eqref{3-63} yields
	\begin{align}\label{3-73}
	\|w_{q+1}\|_{C_{t,x}^{N}}	
		&\lesssim
		(1+\|\Ru_{q}\|_{C_{t}L_{x}^{1}}^{N+6})\lambda_{q+1}^{2\a(N+1)+3}.
\end{align}
Thus,  by (\ref{2-14}) and (\ref{3-73}), for $0\leq N\leq 4$,  we have
	\begin{align}\label{3-74}
	\|v_{q+1}\|_{\Lo^{m}_{\om}C_{t,x}^{N}}
		&\lesssim	\|v_{\ell}\|_{\Lo^{m}_{\om}C_{t,x}^{N}}+\|w_{q+1}\|_{\Lo^{m}_{\om}C_{t,x}^{N}}	\notag\\
		&\leq
		\(8(N+10)mL^{2}50^{q}\)^{\(N+10\)50^{q}}\laq^{2\a(N+1)+5},
\end{align}
where in the last step we used \eqref{2-7}. Thus, estimate (\ref{2-14}) is verified at level $q+1$.

Next, we shall verify the $\Lo^{2r}_{\om}L_{t}^{2}L_{x}^{2}$ decay estimate (\ref{2-17}). For this purpose, we apply the following $L^{p}-$decorrelation result.
\begin{lemma}[\citep{CL-transport equation}; see also \citep{ZZL-MHD-sharp}]\label{Lemma Decorrelation1}
 Let $\theta \in \mathbb{N}$ and $f, g: \mathbb{T}^d \rightarrow \mathbb{R}$ be smooth functions. Then for every $p \in[1,+\infty]$,
\begin{equation}\label{6.7}
	\begin{aligned}
\left|\|f g(\theta \cdot)\|_{L^p\left(\mathbb{T}^d\right)}-\|f\|_{L^p\left(\mathbb{T}^d\right)}\|g\|_{L^p\left(\mathbb{T}^d\right)}\right| \lesssim \theta^{-\frac{1}{p}}\|f\|_{C^1\left(\mathbb{T}^d\right)}\|g\|_{L^p\left(\mathbb{T}^d\right)}.
\end{aligned}
\end{equation}
\end{lemma}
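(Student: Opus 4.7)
The plan is to first bound $\big|\|fg(\theta\cdot)\|_{L^p}^p - \|f\|_{L^p}^p\|g\|_{L^p}^p\big|$ and then upgrade to a bound on the difference of the $L^p$-norms themselves. Writing $h := |f|^p$ and $G := |g|^p$, and using the change of variable together with the $\T^d$-periodicity of $G$ to see that $\int_{\T^d} G(\theta x)\,dx = \int_{\T^d} G$ (under the unit-measure convention adopted in the Cheskidov--Luo framework), the task reduces to bounding the correlation defect
$$E(\theta) := \int_{\T^d} h(x)\,G(\theta x)\,dx - \int_{\T^d} h\,dx \cdot \int_{\T^d} G\,dx.$$

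To estimate $E(\theta)$ I would exploit that $x\mapsto G(\theta x)$ is periodic on the refined lattice of mesh $\theta^{-1}$. Partitioning $\T^d$ into $\theta^d$ cubes $Q_j$ of side $\theta^{-1}$ with centers $x_j$, and applying the change of variables $y=\theta x - j$ together with the $\T^d$-periodicity of $G$ on each $Q_j$, one obtains
$$\int_{Q_j} h(x)G(\theta x)\,dx = \theta^{-d}\int_{[0,1]^d} h\!\Big(\tfrac{y+j}{\theta}\Big)\,G(y)\,dy.$$
Replacing $h((y+j)/\theta)$ by $h(x_j)$ contributes a per-cube error of order $\theta^{-1}\|h\|_{C^1}$, and after summing, the resulting Riemann sum $\theta^{-d}\sum_j h(x_j)$ approximates $\int h$ with the standard $C^1$-Riemann error $O(\theta^{-1}\|h\|_{C^1})$. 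Combining both errors and using the chain-rule bound $\||f|^p\|_{C^1}\lesssim p\|f\|_{C^0}^{p-1}\|f\|_{C^1}$ yields
$$\Big|\|fg(\theta\cdot)\|_{L^p}^p - \|f\|_{L^p}^p\|g\|_{L^p}^p\Big| \lesssim p\,\theta^{-1}\|f\|_{C^0}^{p-1}\|f\|_{C^1}\|g\|_{L^p}^p.$$

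To pass to the difference of $L^p$-norms, I would invoke the elementary inequality $(a-b)^p\leq a^p-b^p$ for $a\geq b\geq 0$ and $p\geq 1$, which follows from the fact that $a\mapsto a^p-(a-b)^p$ has nonnegative derivative on $\{a\geq b\}$. Taking $p$-th roots in the previous display and using $\|f\|_{C^0}^{(p-1)/p}\|f\|_{C^1}^{1/p}\leq\|f\|_{C^1}$ (since $\|f\|_{C^0}\leq\|f\|_{C^1}$) together with the uniform bound $p^{1/p}\leq 2$ for $p\geq 1$ produces the claimed
$$\Big|\|fg(\theta\cdot)\|_{L^p}-\|f\|_{L^p}\|g\|_{L^p}\Big|\lesssim \theta^{-1/p}\|f\|_{C^1}\|g\|_{L^p}.$$
The borderline $p=\infty$ is handled separately and trivially via $\|fg(\theta\cdot)\|_{L^\infty}\leq \|f\|_{L^\infty}\|g\|_{L^\infty}\leq \|f\|_{C^1}\|g\|_{L^\infty}$ together with $\theta^{-1/\infty}=1$. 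The main (minor) point of care is $p$-uniformity of the implicit constant when extracting the $p$-th root; this is precisely the reason for working with the $p$-th-power estimate and the inequality $(a-b)^p\leq a^p-b^p$, rather than attempting a direct Hölder decomposition of $\|fg(\theta\cdot)\|_{L^p}$ that would introduce factors of $\|f\|_{L^p}^{-1}$ which could blow up when $f$ is small.
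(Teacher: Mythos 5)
Your proposal is correct and follows essentially the same route as the proof in the references the paper cites for this lemma (the paper itself states it without proof): a cube decomposition of side $\theta^{-1}$ giving the decorrelation estimate at the level of $p$-th powers, $\bigl|\|fg(\theta\cdot)\|_{L^p}^p-\|f\|_{L^p}^p\|g\|_{L^p}^p\bigr|\lesssim \theta^{-1}\||f|^p\|_{C^1}\|g\|_{L^p}^p$, followed by the elementary inequality $|a-b|^p\le|a^p-b^p|$ and the bound $\||f|^p\|_{C^1}^{1/p}\lesssim\|f\|_{C^1}$ to return to the norms. Your attention to the normalized-measure convention on the torus and the trivial treatment of the $p=\infty$ endpoint (both quantities lie in $[0,\|f\|_{C^0}\|g\|_{L^\infty}]$) are also in order.
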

Using  Lemma \ref{Lemma Decorrelation1} we infer that
	\begin{align}\label{3-75}
		\|w_{q+1}^{(p)}\|_{L_{t}^{2}L_{x}^{2}}
		&\lesssim
		\sum_{k \in \Lambda }
		(\|a_{(k)}\|_{L_{t}^2L_{x}^2}\|g_{(k)}\|_{L_t^2}\|\psi_{(k_{1})}\phi_{(k)}\|_{C_{t}L_x^2}+\sigma^{-\frac{1}{2}}\|a_{(k)}\|_{C_{t, x}^1}\|g_{(k)}\|_{L_t^2}\|\psi_{(k_{1})}\phi_{(k)}\|_{C_{t}L_x^2}),
\end{align}
which along with  Lemmas \ref{Lemma spatial building blocks}, \ref{Lemma temporal building blocks}, \ref{Lemma amplitudes 1}  yields that
	\begin{align}\label{3-76}
	\|w_{q+1}^{(p)}\|_{L_{t}^{2}L_{x}^{2}}
	\lesssim
	\dqq^{\frac{1}{2}}+\|\Ru_{q}\|_{C_{t}L_{x}^{1}}^{\frac{1}{2}}+\sigma^{-\frac{1}{2}}(1+\|\Ru_{q}\|_{C_{t}L_{x}^{1}}^{3})\ell^{-13}.
\end{align}
Taking  into account  (\ref{2-7}), (\ref{3-49}), (\ref{3-50}), (\ref{3-57})-(\ref{3-59}) we have
	\begin{align}\label{3-77}
		\|w_{q+1}\|_{L_{t}^{2}L_{x}^{2}}
		&\lesssim
		\|\Theta_{q+1}\|_{C_{t}}\|w_{q+1}^{(p)+(c)}\|_{L_{t}^{2}L_{x}^{2}}+	\|\Theta_{q+1}\|^{2}_{C_{t}}\|w_{q+1}^{(t)+(o)}\|_{L_{t}^{2}L_{x}^{2}}\notag\\
		&\lesssim
		\dqq^{\frac{1}{2}}+\|\Ru_{q}\|_{C_{t}L_{x}^{1}}^{\frac{1}{2}}+(1+\|\Ru_{q}\|_{C_{t}L_{x}^{1}}^{5})\ell^{-14}\mu^{-1}r_{\perp}^{-1}r_{\parallel}^{-\frac{1}{2}}\tau^{\frac{1}{2}}.
\end{align}
Then, by   (\ref{2-7}), (\ref{2-15}), (\ref{2-16}), (\ref{2-22}) and  (\ref{3-49}), 
\begin{align}\label{3-78}
\|w_{q+1}\|_{\Lo^{2r}_{\om}L_{t}^{2}L_{x}^{2}}
		&\lesssim
		\dqq^{\frac{1}{2}}+\|\Ru_{q}\|_{\Lo^{r}_{\om}C_{t}L_{x}^{1}}^{\frac{1}{2}}+(1+\|\Ru_{q}\|_{\Lo^{10r}_{\om}C_{t}L_{x}^{1}}^{5})\ell^{-14}\mu^{-1}r_{\perp}^{-1}r_{\parallel}^{-\frac{1}{2}}\tau^{\frac{1}{2}} 
		\lesssim
		\dqq^{\frac{1}{2}}.
\end{align}
Thus, by (\ref{2-14}) and  the standard mollification estimates, we have
	\begin{align}\label{3-79}
	\|v_{q+1}-v_{q}\|_{\Lo^{2r}_{\om}L_{t}^{2}L_{x}^{2}}
	&\lesssim \|v_\ell-v_{q}\|_{\Lo^{2r}_{\om}L_{t}^{2}L_{x}^{2}}+	\|w_{q+1}\|_{\Lo^{2r}_{\om}L_{t}^{2}L_{x}^{2}}\notag\\
	& \lesssim \ell\|v_{q}\|_{\Lo^{2r}_{\om}C_{t,x}^{1}} +\delta_{q+1}^{\frac{1}{2}} 
    \lesssim\delta_{q+1}^{\frac{1}{2}},
\end{align}
which yields (\ref{2-17}) at level $q+1$.

Regarding  the $\Lo^{r}_{\om}L_{t}^{1}L_{x}^{2}$-estimate (\ref{2-18}) at $q+1$,  by  (\ref{3-49}),  (\ref{3-50}), and (\ref{3-56})-(\ref{3-59}) in Lemma \ref{Lemma Estimates of perturbations},  we get
	\begin{align}\label{3-80}
		\|w_{q+1}\|_{L_{t}^{1}L_{x}^{2}}
		&\lesssim \|\Theta_{q+1}\|_{C_{t}}\|w_{q+1}^{(p)+(c)}\|_{L_{t}^{1}L_{x}^{2}}+	\|\Theta_{q+1}\|^{2}_{C_{t}}\|w_{q+1}^{(t)+(o)}\|_{L_{t}^{1}L_{x}^{2}}\notag\\
	&\lesssim
	(1+\|\Ru_{q}\|_{C_{t}L_{x}^{1}}^{5})\ell^{-20}\sigma^{-1},
\end{align}
which along with (\ref{2-7}), (\ref{2-15}), (\ref{2-16})  and (\ref{2-22})  implies
	\begin{align}\label{3-81}
	\|v_{q+1}-v_q\|_{\Lo^{r}_{\om}L_{t}^{1}L_{x}^{2}}
		& \lesssim
		\|v_\ell-v_q\|_{\Lo^{r}_{\om}L_{t}^{1}L_{x}^{2}}+\|w_{q+1}\|_{\Lo^{r}_{\om}L_{t}^{1}L_{x}^{2}}\notag\\
		&\lesssim	\ell\|v_q\|_{\Lo^{r}_{\om}C_{t,x}^{1}}+	(1+\|\Ru_{q}\|_{\Lo^{5r}_{\om}C_{t}L_{x}^{1}}^{5})\ell^{-20}\sigma^{-1} 
		\leq
		\dqqq^{\frac{1}{2}}.
\end{align}
 This verifies estimate (\ref{2-18}) at level $q+1$.

At last, for  the inductive estimate (\ref{2-19}), by (\ref{3-49}), (\ref{3-50}) and Lemma \ref{Lemma Estimates of perturbations}, we get
	\begin{align}\label{3-82}
	\|w_{q+1}\|_{L_{t}^{\ga}W_{x}^{s,p}}	
	&\lesssim
	\|\Theta_{q+1}\|_{C_{t}}\|w_{q+1}^{(p)+(c)}\|_{L_{t}^{\ga}W_{x}^{s,p}}	+	\|\Theta_{q+1}\|^{2}_{C_{t}}\|w_{q+1}^{(t)+(o)}\|_{L_{t}^{\ga}W_{x}^{s,p}}	\notag\\
&\lesssim
	(1+\|\Ru_{q}\|_{C_{t}L_{x}^{1}}^{s+6})\ell^{-6s-20}\sigma^{-1},
\end{align}
where the last step was due to  (\ref{2-8}) and the fact that
\begin{align}\label{3-83}
	\ell^{-7}r_{\perp}^{\frac{2}{p}-1}r_{\parallel}^{\frac{1}{p}-\frac{1}{2}}\laq^{s}\tau^{\frac{1}{2}-\frac{1}{\ga}}
	&=\ell^{-7}\lambda_{q+1}^{s-\frac{(4\a-5)}{\gamma}+\frac{3}{p}+(\frac{7}{2}-\frac{1}{\gamma}+\frac{8}{p})\varepsilon+2\a-1}
	\leq \lambda_{q+1}^{-8\varepsilon}\leq \ell^{-6s-20}\sigma^{-1}.
\end{align}
Then, using the embedding $C_{x}^{3}(\T^{3}) \hookrightarrow W_{x}^{s,p}(\T^{3})$ for  $(s, \gamma, p) \in \mathcal{S}_{1}$, (\ref{2-7}), (\ref{2-15}), (\ref{2-16})  and (\ref{2-22}), we get
	\begin{align}\label{3-84}
		\|v_{q+1}-v_q\|_{\Lo^{r}_{\om}L_{t}^{\ga}W_{x}^{s,p}}
		& \lesssim
		\|v_\ell-v_q\|_{\Lo^{r}_{\om}L_{t}^{\ga}W_{x}^{s,p}}+\|w_{q+1}\|_{\Lo^{r}_{\om}L_{t}^{\ga}W_{x}^{s,p}} \notag\\
		& \lesssim
		\|v_\ell-v_q\|_{\Lo^{r}_{\om}L_{t}^{\ga}C_{x}^{3}}+ (1+	\|\Ru_{q}\|_{\Lo^{(s+6)r}_{\om}C_{t}L_{x}^{1}}^{s+6})\ell^{-6s-20}\sigma^{-1} \notag\\
		&\lesssim
		\ell\|v_q\|_{\Lo^{r}_{\om}C_{t,x}^{4}}+	 (1+	\|\Ru_{q}\|_{\Lo^{9r}_{\om}C_{t}L_{x}^{1}}^{9})\ell^{-6s-20}\sigma^{-1}
	\lesssim
		\dqqq^{\frac{1}{2}},
\end{align}
 which justifies estimate (\ref{2-19}) at level $q+1$. Therefore, the proof is complete.
 \hfill $\square$

\section{Reynolds stresses in the regime \texorpdfstring{$\mathcal{S}_{1}$}{S1}}\label{Sec-Reynolds}

This section is to devoted to constructing and verifying the inductive estimates (\ref{2-15}) and (\ref{2-16}) at level $q+1$ 
for  the new Reynolds stress $\Ru_{q+1}$  
in the supercritical regime $\mathcal{S}_{1}$ when $\a\in[\frac{5}{4},2)$.

Let us first recall from 
{\citep{LS-Eluerflows}} the inverse-divergence operator $\mathcal{R}$  defined by
	\begin{align}\label{6.5}
&\left(\mathcal{R}v\right)^{k l}:=\partial_k \Delta^{-1} v^l+\partial_l \Delta^{-1} v^k-\frac{1}{2}\left(\delta_{k l}+\partial_k \partial_l \Delta^{-1}\right) \operatorname{div} \Delta^{-1} v, 
\end{align}
where $v$ is mean-free. Note that, the operator $\mathcal{R}$ maps mean-free functions to symmetric and trace-free matrices and satisfies the identity
\begin{flalign*}
\operatorname{div} \mathcal{R}(v)=v .
\end{flalign*}
Moreover, $|\nabla| \mathcal{R}$ is a Calderon-Zygmund operator and is  bounded in the spaces $L^p$, $1<p<+\infty$ 
(see \citep{LS-Eluerflows}).

\subsection{Reynolds stress}
We substract from system (\ref{2-26})  from  (\ref{2-12}) at level $q+1$ to derive
\begin{align}\label{4-1}
		\div\Ru_{q+1}-\nabla P_{q+1}
		&=\underbrace{\p_{t}\omw^{(p)+(c)}_{q+1}+\nu(-\Delta)^{\a}w_{q+1}-(z_{q+1}-z_{\ell})+\div\big((v_{\ell}+z_{\ell})\otimes w_{q+1}+w_{q+1}\otimes ( v_{\ell}+z_{\ell})\big)}_{\div \Ru_{lin}+\nabla P_{lin}}\notag\\  
		&\quad+
		\underbrace{\div(\omw^{(p)}_{q+1}\otimes                       \omw^{(c)+(t)+(o)}_{q+1}
		+\omw^{(c)+(t)+(o)}_{q+1}\otimes w_{q+1})}_{\div\Ru_{corr}+\nabla P_{corr}}+\div(\Ru_{com1})\notag\\ 
		&\quad
		+\underbrace{\div( v_{q+1}\otimes z_{q+1}+z_{q+1}\otimes v_{q+1}
		-v_{q+1}\otimes z_{\ell}-z_{\ell}\otimes v_{q+1}+z_{q+1}\otimes z_{q+1}
		-z_{\ell}\otimes z_{\ell})}_{\div \Ru_{com2}+\nabla P_{com2}}\notag\\
		&\quad+\underbrace{\div(\omw^{(p)}_{q+1}\otimes\omw^{(p)}_{q+1}+\Ru_{\ell})+\p_{t}\omw^{(t)}_{q+1}+\p_{t}\omw^{(o)}_{q+1}}_{\div\Ru_{osc}+\nabla P_{osc}}-\nabla P_{\ell},
\end{align}
where $\Ru_{com1}$ is given by \eqref{2-27}. 

Using the inverse divergence operator  $\mathcal{R}$  we define
	\begin{align}
	&\Ru_{lin}
	:=\mathcal{R}\p_{t}\omw_{q+1}^{(p)+(c)}+\mathcal{R}\(\nu(-\Delta)^{\a}w_{q+1}\)+\mathcal{R}(z_{\ell}-z_{q+1})+(v_{\ell}+z_{\ell})\mathring{\otimes}w_{q+1}+w_{q+1}\mathring{\otimes}( v_{\ell}+z_{\ell}),\label{4-2}\\
	&\Ru_{corr}:=
	\omw_{q+1}^{(p)}\mathring{\otimes} \omw_{q+1}^{(c)+(t)+(o)}	+\omw_{q+1}^{(c)+(t)+(o)}\mathring{\otimes}w_{q+1},\label{4-3}\\
	&\Ru_{com2}
	:= v_{q+1}\mathring{\otimes}z_{q+1}+z_{q+1}\mathring{\otimes} v_{q+1}-v_{q+1}\mathring{\otimes}z_{\ell}
	-z_{\ell}\mathring{\otimes} v_{q+1}
	+z_{q+1}\mathring{\otimes} z_{q+1} -z_{\ell}\mathring{\otimes}z_{\ell}.\label{4-4}
\end{align}
For the remaining oscillation error,  using \eqref{3-43-1} we have
\begin{align}\label{4-5}
	\div(\omw^{(p)}_{q+1}\otimes\omw^{(p)}_{q+1}+\Ru_{\ell})
	&=(1-\Theta_{q+1}^{2})\div \Ru_{\ell}+\Theta_{q+1}^{2}\div\big(\sum_{k\in\Lambda} a_{(k)}^{2}(g_{(k)}^{2}-1) \fint_{\T^3}W_{(k)}\otimes W_{(k)} \mathrm{d}x\big)\notag\\
	&\quad+\Theta_{q+1}^{2}\div(\varrho \mathrm{Id})+\Theta_{q+1}^{2}\div\big(\sum_{k\in\Lambda}a_{(k)}^{2}g_{(k)}^{2}\mathbb{P}_{\neq 0}(W_{(k)}\otimes W_{(k)})\big).
\end{align}
By (\ref{3-47-2}), (\ref{3-48-2}) and (\ref{4-5}), we have
\begin{align}\label{4-6}
	&\quad\div(\omw^{(p)}_{q+1}\otimes\omw^{(p)}_{q+1}+\Ru_{\ell})+\p_{t}\omw^{(t)}_{q+1}+\p_{t}\omw^{(o)}_{q+1}\notag\\
	&=(1-\Theta_{q+1}^{2})\div \Ru_{\ell}+\p_{t}\Theta_{q+1}^{2}w^{(t)+(o)}_{q+1}+\Theta_{q+1}^{2}\div(\varrho \mathrm{Id})+\Theta_{q+1}^{2}\sum_{k \in \Lambda}\mathbb{P}_{\neq 0}\big(g_{(k)}^{2}\mathbb{P}_{\neq 0}(W_{(k)}\otimes W_{(k)})\nabla (a_{(k)}^{2})\big)	\notag\\
	&\quad-\Theta_{q+1}^{2}\mu^{-1} \sum_{k \in \Lambda} \mathbb{P}_{\neq 0}(\p_{t}(a_{(k)}^{2}g_{(k)}^{2})\psi_{(k_{1})}^{2}\phi_{(k)}^{2})
	-\Theta_{q+1}^{2}\sigma^{-1} \sum_{k \in \Lambda} \mathbb{P}_{\neq 0}\big(h_{(k)}\fint_{\T^3}W_{(k)}\otimes W_{(k)}\mathrm{d}x\p_{t}\nabla (a_{(k)}^{2})\big)\notag\\
		&\quad+(\nabla\Delta^{-1}\div)\mu^{-1}\mathbb{P}_{\neq 0}\p_{t}(a_{(k)}^{2}g_{(k)}^{2}\psi_{(k_{1})}^{2}\phi_{(k)}^{2})+(\nabla\Delta^{-1}\div)\sigma^{-1}\mathbb{P}_{\neq 0}\p_{t}\big(h_{(k)} \fint_{\mathbb{T}^3} W_{(k)} \otimes W_{(k)} \mathrm{d} x \nabla(a_{(k)}^2)\big).
\end{align}

Thus, we define the oscillation error  by
\begin{align}
   \Ru_{osc}:=\Ru_{osc.1}+\Ru_{osc.2}+\Ru_{osc.3}+\Ru_{osc.4},   \label{4-7}
\end{align}
where
	\begin{align}
		&\Ru_{osc.1}
		:=(1-\Theta_{q+1}^{2}) \Ru_{\ell}+\mathcal{R}(\p_{t}\Theta_{q+1}^{2}w^{(t)+(o)}_{q+1}),\label{4-8}\\
		&\Ru_{osc.2}:=\Theta_{q+1}^{2}\sum_{k \in \Lambda}\mathcal{R}\mathbb{P}_{\neq 0}\big(g_{(k)}^{2}\mathbb{P}_{\neq 0}(W_{(k)}\otimes W_{(k)})\nabla (a_{(k)}^{2})\big),\label{4-9}\\
		&\Ru_{osc.3}
		:=-\Theta_{q+1}^{2}\mu^{-1} \sum_{k \in \Lambda} \mathcal{R}\mathbb{P}_{\neq 0}\big(\p_{t}(a_{(k)}^{2}g_{(k)}^{2})\psi_{(k_{1})}^{2}\phi_{(k)}^{2}k_{1}\big),\label{4-10}\\
	&\Ru_{osc.4}
	:=-\Theta_{q+1}^{2}\sigma^{-1} \sum_{k \in \Lambda} \mathcal{R}\mathbb{P}_{\neq 0}\big(h_{(k)}\fint_{\T^3}W_{(k)}\otimes W_{(k)}\mathrm{d}x\p_{t}\nabla (a_{(k)}^{2})\big).    \label{4-11}
\end{align}

Therefore,  the new Reynolds stress at level $ q + 1 $ is defined by
$$\Ru_{q+1}:=\Ru_{lin}+\Ru_{corr}+\Ru_{com1}+\Ru_{com2}+\Ru_{osc}.$$

\subsection{Verification of growth estimate}
We shall verify the growth estimate  (\ref{2-15}) for the new Reynolds stress at level $q+1$.

\medskip 
\paragraph{\bf Linear errors.}
First, by Lemma \ref{Lemma Estimates of perturbations}, (\ref{3-49})  and  (\ref{3-50}),
	\begin{align}\label{4-12}
			\|\mathcal{R}\p_{t}\omw_{q+1}^{(p)+(c)}\|_{C_{t}L_x^{1}}
		&\lesssim\| \Theta_{q+1}\|_{C_{t}^{1}} \|w_{q+1}^{(p)+(c)} \|_{C_{t,x}^{1}}
		\lesssim
		(1+\|\Ru_{q}\|_{C_{t}L_{x}^{1}}^{5})\lambda_{q+1}^{4\a+1}\varsigma_{q}^{-1}.
\end{align}
By  (\ref{3-49}), (\ref{3-54}), Lemma \ref{Lemma Estimates of perturbations}
and the Sobolev embedding $H^{3}_{x}(\T^{3})\hookrightarrow H^{2\a-1}_{x}(\T^{3})$
with $\a\in[\frac{5}{4},2)$, we obtain
	\begin{align}\label{4-13}
	\|\mathcal{R}(\nu(-\Delta)^{\a}w_{q+1})\|_{C_{t}L_x^{1}}{\lesssim
\|w_{q+1}\|_{C_{t}H_x^{2\a-1}}}
	&\lesssim
\|w_{q+1}\|_{C_{t}H_x^{3}}
 \lesssim
 (1+\|\Ru_{q}\|_{C_{t}L_{x}^{1}}^{8})\ell^{-7}\laq^{3}\tau^{\frac{1}{2}},
\end{align}
where  the first inequality was due to the boundeness of Calderon-Zygmund operators in the space $L_{x}^{2}$.

For the remaining linear error in  \eqref{4-2}, by Lemma \ref{Lemma Estimates of perturbations} and H\"{o}lder's inequality, we obtain
	\begin{align}\label{4-14}
	&\quad\|\mathcal{R}(z_{\ell}-z_{q+1})\|_{C_{t}L_x^{2}}
	+\|(v_{\ell}+z_{\ell})\mathring{\otimes}w_{q+1}+w_{q+1}\mathring{\otimes}( v_{\ell}+z_{\ell})\|_{C_{t}L_x^{2}}\notag\\
	&\lesssim\|z\|_{C_{t}L_x^{2}}+(\|v_{q}\|_{C_{t,x}}+\|z\|_{C_{t} L_{x}^{2}})\|w_{q+1}\|_{C_{t,x}}\notag\\
	&\lesssim
   (\|v_{q}\|_{C_{t,x} }+\|z\|_{C_{t} L_{x}^{2}})(1+\|\Ru_{q}\|_{C_{t}L_{x}^{1}}^{6})\lambda_{q+1}^{2\a+1}.
\end{align}
Thus, we conclude from the above estimates \eqref{4-12}-\eqref{4-14} that
	\begin{align}\label{4-15}
	\|\Ru_{lin}\|_{C_{t}L_x^{1}}
	&\lesssim
	(1+\|\Ru_{q}\|_{C_{t}L_{x}^{1}}^{8})\lambda_{q+1}^{4\a+2}+(\|v_{q}\|_{C_{t,x} }+\|z\|_{C_{t} L_{x}^{2}})(1+\|\Ru_{q}\|_{C_{t}L_{x}^{1}}^{6})\lambda_{q+1}^{2\a+1}. 
\end{align}

\medskip 
\paragraph{\bf Oscillation error.} 
By (\ref{3-49}),  (\ref{4-8}) and Lemma \ref{Lemma Estimates of perturbations}, we  get
\begin{align}\label{4-16}
	\| \Ru_{osc.1}\|_{C_{t}L_{x}^{1}}
&\lesssim
\|1-\Theta_{q+1}^{2}\|_{C_{t}}\|\Ru_{q}\|_{C_{t}L_{x}^{1}}+\|\Theta_{q+1}^{2}\|_{C_{t}^{1}}\|w^{(t)}_{q+1}+w^{(o)}_{q+1}\|_{C_{t,x}}
\notag\\
 &\lesssim
  (1+\|\Ru_{q}\|_{C_{t}L_{x}^{1}}^{6})\lambda_{q+1}^{2\a+1}\varsigma_{q}^{-1}.
\end{align}
Moreover, by (\ref{3-49}), (\ref{4-9}) and Lemmas \ref{Lemma spatial building blocks}, \ref{Lemma temporal building blocks}, \ref{Lemma amplitudes 1},
\begin{align}\label{4-17}
	\| \Ru_{osc.2}\|_{C_{t}L_{x}^{1}}
	&\lesssim
	\|\Theta_{q+1}^{2}\|_{C_{t}}
    \|\mathcal{R}\mathbb{P}_{\neq 0}(g_{(k)}^{2}\mathbb{P}_{\neq 0}(W_{(k)}\otimes W_{(k)})\nabla (a_{(k)}^{2}))\|_{C_{t}L_{x}^{2}}\notag\\
    &\lesssim
	\sum_{k\in\Lambda}\|a_{(k)}^{2}\|_{C_{t,x}^{1}}\|W_{(k)}\otimes W_{(k)}\|_{C_{t}L_{x}^{2}}
	\|g_{(k)}^{2}\|_{L_{t}^{\infty}}\notag\\
	&\lesssim
	 (1+\|\Ru_{q}\|_{C_{t}L_{x}^{1}}^{5})\ell^{-20}r_{\perp}^{-1}r_{\parallel}^{-\frac{1}{2}}\tau.
\end{align}
Similarly, by (\ref{3-49}), (\ref{4-10}) and Lemmas \ref{Lemma spatial building blocks}, \ref{Lemma temporal building blocks}, \ref{Lemma amplitudes 1},
\begin{align}\label{4-18}
	\|\Ru_{osc.3}\|_{C_{t}L_{x}^{1}}
	&\lesssim
	\mu^{-1} \|\Theta_{q+1}^{2}\|_{C_{t}}\sum_{k \in \Lambda}\| \mathcal{R}\mathbb{P}_{\neq 0}(\p_{t}(a_{(k)}^{2}g_{(k)}^{2})\psi_{(k_{1})}^{2}\phi_{(k)}^{2})\|_{C_{t}L_{x}^{2}}\notag\\
	&\lesssim
	\mu^{-1} \sum_{k\in\Lambda}\|a_{(k)}^{2}\|_{C_{t,x}^{1}}\|\psi_{(k_{1})}^{2}\|_{C_{t}L_{x}^{2}}\|\phi_{(k)}^{2}\|_{C_{t}L_{x}^{2}}\|g_{(k)}^{2}\|_{C_{t}^{1}}\notag\\
	&\lesssim
	(1+\|\Ru_{q}\|_{C_{t}L_{x}^{1}}^{5})\ell^{-20}\mu^{-1} r_{\perp}^{-1}r_{\parallel}^{-\frac{1}{2}}\sigma\tau^{2}.
\end{align}
For the last component $\Ru_{osc.4}$, by (\ref{3-49}), (\ref{3-21}), (\ref{4-11})  and Lemma \ref{Lemma amplitudes 1},
\begin{align}  \label{4-19}
	\|\Ru_{osc.4}\|_{C_{t}L_{x}^{1}}
	&\lesssim
		\sigma^{-1}\|\Theta_{q+1}^{2}\|_{C_{t}}\sum_{k\in\Lambda}\|\mathcal{R}\mathbb{P}_{\neq 0}(h_{(k)}\fint_{\T^3}W_{(k)}\otimes W_{(k)}\mathrm{d}x\p_{t}\nabla (a_{(k)}^{2}))\|_{C_{t}L_{x}^{2}}\notag\\
	 &\lesssim
	\sigma^{-1}\sum_{k\in\Lambda}\|h_{(k)}\|_{C_{t}}
	            \|a_{(k)}^{2}\|_{C_{t,x}^{2}}   \notag\\
	 &\lesssim
		(1+\|\Ru_{q}\|_{C_{t}L_{x}^{1}}^{6})\ell^{-26}\sigma^{-1}.
\end{align}
Thus, combining \eqref{4-16}-\eqref{4-19} altogether,  we obtain
\begin{align}  \label{4-20}
		\|\Ru_{osc}\|_{C_{t}L_x^{1}}
		&\lesssim
		(1+\|\Ru_{q}\|_{C_{t}L_{x}^{1}}^{6})(\lambda_{q+1}^{2\a+1}\varsigma_{q}^{-1}+\ell^{-20}\lambda_{q+1}^{4\a-\frac{7}{2}+7\varepsilon}+\ell^{-20}\lambda_{q+1}^{6\a-\frac{15}{2}+18\varepsilon}+\ell^{-26}\lambda_{q+1}^{-2\varepsilon})\notag\\
		&\lesssim
		(1+\|\Ru_{q}\|_{C_{t}L_{x}^{1}}^{6})\lambda_{q+1}^{4\a+1},
\end{align}
where   the last  step was due to (\ref{2-6}), (\ref{2-22}) and (\ref{3-1}).

\medskip 
\paragraph{\bf Corrector error.}
By Lemma \ref{Lemma Estimates of perturbations}, (\ref{3-49}) and (\ref{4-4}),  we have
\begin{align}\label{4-21}
	\|\Ru_{corr}\|_{C_{t}L_{x}^{1}}
	&\lesssim
	\|\omw_{q+1}^{(c)+(t)+(o)} \|_{C_{t,x}}(\|\omw_{q+1}^{(p)}\|_{C_{t,x}}+\|w_{q+1}\|_{C_{t,x}})\notag\\
	&\lesssim
	(1+\|\Ru_{q}\|_{C_{t}L_{x}^{1}}^{12})\lambda_{q+1}^{4\a+2}.
\end{align}

\medskip 
\paragraph{\bf Commutator errors.} For the remaining commutator errors, by
 (\ref{2-27}), we have
	\begin{align}\label{4-22}
		\|\Ru_{com1}\|_{C_{t}L_x^{1}}
	&\lesssim \|v_{q}\|_{C_{t,x}}^{2}+\|z\|_{C_{t}L_{x}^{2}}^{2}.
\end{align}
Moreover, by (\ref{4-4}),
\begin{align}\label{4-23}
	\|\Ru_{com2}\|_{C_{t}L_x^{1}}
	&\lesssim
	\|v_{q+1}\|_{C_{t,x}}^{2}+\|z\|_{C_{t} L_{x}^{2}}^{2}.
\end{align}

\medskip 
\paragraph{\bf Verification of inductive estimate  (\ref{2-15}).} 
Now, combining the above estimates (\ref{4-15}) and \eqref{4-20}-\eqref{4-23} altogether, we conclude that
\begin{align}\label{4-24}
\|\Ru_{q+1}\|_{C_{t}L_x^{1}}
&\lesssim
	(1+\|\Ru_{q}\|_{C_{t}L_{x}^{1}}^{8})\lambda_{q+1}^{4\a+2}+(\|v_{q}\|_{C_{t,x} }+\|z\|_{C_{t} L_{x}^{2}})(1+\|\Ru_{q}\|_{C_{t}L_{x}^{1}}^{6})\lambda_{q+1}^{2\a+1}+(1+\|\Ru_{q}\|_{C_{t}L_{x}^{1}}^{6})\lambda_{q+1}^{4\a+1}\notag\\           
	&\quad+(1+\|\Ru_{q}\|_{C_{t}L_{x}^{1}}^{12})\lambda_{q+1}^{4\a+2}
	+\|v_{q}\|_{C_{t,x}}^{2}+\|z\|_{C_{t}L_{x}^{2}}^{2}  
	+\|v_{q+1}\|_{C_{t,x}}^{2} \notag\\
	&\lesssim
	(\|v_{q}\|_{C_{t,x} }+\|z\|_{C_{t} L_{x}^{2}})(1+\|\Ru_{q}\|_{C_{t}L_{x}^{1}}^{6})\lambda_{q+1}^{2\a+1}+(1+\|\Ru_{q}\|_{C_{t}L_{x}^{1}}^{12})\lambda_{q+1}^{4\a+2}\notag\\           
	&\quad
	+\|v_{q}\|_{C_{t,x}}^{2}+\|z\|_{C_{t}L_{x}^{2}}^{2}  
	+\|v_{q+1}\|_{C_{t,x}}^{2}.
\end{align}
Taking the $m$-th moment in (\ref{4-24}) and using (\ref{stochastic evolution}),  (\ref{2-14}), (\ref{2-15}), (\ref{3-74}) and H\"{o}lder's inequality we obtain
\begin{align*}\label{4-25}
	\|\Ru_{q+1}\|_{\Lo^{m}_{\om}C_{t}L_x^{1}}
		&\lesssim
		(\|v_{q}\|_{\Lo^{2m}_{\om}C_{t,x}}+\|z\|_{\Lo^{2m}_{\om}C_{t}L_{x}^{2}})(1+\|\Ru_{q}\|_{\Lo^{12m}_{\om}C_{t}L_{x}^{1}}^{6})\lambda_{q+1}^{2\a+1}+	(1+\|\Ru_{q}\|_{\Lo^{12m}_{\om}C_{t}L_{x}^{1}}^{12})\lambda_{q+1}^{4\a+2}\notag\\
		&\quad+\|v_{q}\|_{\Lo^{2m}_{\om}C_{t,x}}^{2}
			+\|z\|_{\Lo^{2m}(\om;C_{t}L_{x}^{2})}^{2}  
		+\|v_{q+1}\|_{\Lo^{2m}(\om;C_{t,x})}^{2} 
		\notag\\
		&\leq
		\lambda_{q+1}^{(4\a+12)}(8mL^{2}50^{q+1})^{50^{q+1}},
\end{align*}
which verifies the inductive estimate (\ref{2-15}) at level $q+1$.

\subsection{Verification of decay estimate \eqref{2-16}: away from the initial time}
To verify the decay estimate \eqref{2-16} at level $q+1$, we first consider the difficult regime $(\frac{\varsigma_{q}}{2}, T]$ away from the initial time,
where $\varsigma_{q}$ is given by \eqref{2-6}.

 Choose
\begin{equation}\label{4-26}
	\begin{aligned}
		\rho:=\frac{3-8 \varepsilon}{3-9 \varepsilon} \in(1,2),
\end{aligned}\end{equation}
where $\varepsilon$ is given by (\ref{2-8}). Then,
\begin{equation}\label{4-27}
	\begin{aligned}
		(3-8\varepsilon)(1-\frac{1}{\rho})= \varepsilon,
\end{aligned}\end{equation}
and
\begin{equation}\label{4-28}
	\begin{aligned}
		r_{\perp}^{\frac{2}{\rho}-2}r_{\parallel}^{\frac{1}{\rho}-1}=\lambda_{q+1}^{\varepsilon},\quad 
		r_{\perp}^{\frac{2}{\rho}-1}r_{\parallel}^{\frac{1}{\rho}-\frac{1}{2}}=\lambda_{q+1}^{-\frac{3}{2}+5\varepsilon},\quad 
		r_{\perp}^{\frac{2}{\rho}}r_{\parallel}^{\frac{1}{\rho}-\frac{3}{2}}=\lambda_{q+1}^{-\frac{3}{2}+3\varepsilon}.
\end{aligned}\end{equation}

We consider the linear error, the oscillation error, the corrector error and the commutator errors in the following four subsections, respectively.

\subsubsection{Linear error:}
Let us first consider the linear error $\Ru_{lin}$ in \eqref{4-2}.

\paragraph{\bf Control of $\p_{t}\omw_{q+1}^{(p)+(c)}$.} 
By \eqref{3-50}, we have
\begin{align*}
\|\mathcal{R}\p_{t}\omw_{q+1}^{(p)+(c)}\|_{L_{(\frac{\varsigma_{q}}{2}, T]}^{1}L_x^{\rho}} 
	&\leq
	\|\mathcal{R}(\p_t \Theta_{q+1}) w_{q+1}^{(p)+(c)}\|_{L_{t}^{1}L_x^{\rho}}
	+\|\mathcal{R} \Theta_{q+1} \partial_t w_{q+1}^{(p)+(c)} \|_{L_{t}^{1}L_x^{\rho}} \notag\\
	&=:K_{1}+K_{2}.
\end{align*}
Note that, by ({\ref{3-49}}), (\ref{3-56}) and (\ref{3-57}),
\begin{align*}
		K_{1}
  \lesssim 
  \|\Theta_{q+1}\|_{C_{t}^{1}} \|w_{q+1}^{(p)+(c)}\|_{L_{t}^{1}L_x^{\rho}}
		\lesssim
(1+\|\Ru_{q}\|_{C_{t}L_{x}^{1}}^{4})\ell^{-7}\varsigma_{q}^{-1}r_{\perp}^{\frac{2}{\rho}-1}r_{\parallel}^{\frac{1}{\rho}-\frac{1}{2}}\tau^{-\frac{1}{2}},
	\end{align*}
where the last was due to $r_{\perp}^{-1}r_{\parallel}<1$.

Moreover, by Lemmas \ref{Lemma spatial building blocks}, \ref{Lemma temporal building blocks}, \ref{Lemma amplitudes 1}, (\ref{3-45}) and ({\ref{3-49}}),
\begin{align*}
    \quad K_{2}
	&\lesssim
	\sum_{k \in\Lambda}
	\|\mathcal{R}\curl\curl\p_{t}(a_{(k)}g_{(k)}W_{(k)}^{c})\|_{L_{t}^{1}L_x^{\rho}}\notag\\
	&\lesssim
	\sum_{k \in \Lambda}\big(
	\|g_{(k)}\|_{L_{t}^{1}}(\|a_{(k)}\|_{C_{t,x}^{2}}\|W_{(k)}^{c}\|_{C_{t}W_{x}^{1,\rho}}+\|a_{(k)}\|_{C_{t,x}^{1}}\|\p_{t}W_{(k)}^{c}\|_{W_{x}^{1,\rho}})+	\|\p_{t}g_{(k)}\|_{L_{t}^{1}}\|a_{(k)}\|_{C_{t,x}^{2}}\|W_{(k)}^{c}\|_{C_{t}W_{x}^{1,\rho}}\big)\notag\\
	&\lesssim
	(1+\|\Ru_{q}\|_{C_{t}L_{x}^{1}}^{4})\ell^{-13}r_{\perp}^{\frac{2}{\rho}}r_{\parallel}^{\frac{1}{\rho}-\frac{3}{2}}\mu	\tau^{-\frac{1}{2}},
\end{align*}
where the last was due to \eqref{3-1-2}.

Thus, we obtain
\begin{align}\label{4-28-0}
\|\mathcal{R}\p_{t}\omw_{q+1}^{(p)+(c)}\|_{L_{(\frac{\varsigma_{q}}{2}, T]}^{1}L_x^{\rho}}
	&\lesssim
	(1+\|\Ru_{q}\|_{C_{t}L_{x}^{1}}^{4})\ell^{-13}r_{\perp}^{\frac{2}{\rho}}r_{\parallel}^{\frac{1}{\rho}-\frac{3}{2}}\mu	\tau^{-\frac{1}{2}}.
\end{align}

\medskip 
\paragraph{\bf  Control of hyper-viscosity. }
For the hyper-viscosity $(-\Delta)^{\a}$,
we use the interpolation and Lemma \ref{Lemma Estimates of perturbations} to obtain 
\begin{align*}
	\|\mathcal{R}\nu(-\Delta)^{\a}\omw_{q+1}^{(p)}\|_{L^{1}_{(\frac{\varsigma_{q}}{2}, T]}L_x^{\rho}}  
	&\lesssim \|\Theta_{q+1}\|_{C_{t}}\|w_{q+1}^{(p)}\|^{\frac{4-2\a}{3}}_{L_{t}^{1}L_x^{\rho}}\|w_{q+1}^{(p)}\|^{\frac{2\a-1}{3}}_{L_{t}^{1}W_x^{3,\rho}}\notag\\
	&\lesssim (1+\|\Ru_{q}\|_{C_{t}L_{x}^{1}}^{2\a+1})\ell^{-7}r_{\perp}^{\frac{2}{\rho}-1}r_{\parallel}^{\frac{1}{\rho}-\frac{1}{2}}\laq^{2\a-1}\tau^{-\frac{1}{2}}.
\end{align*}
Similarly, we have
\begin{align*}
	&\|\mathcal{R}\nu(-\Delta)^{\a}\omw_{q+1}^{(c)}\|_{L^{1}_{(\frac{\varsigma_{q}}{2}, T]}L_x^{\rho}}
	\lesssim (1+\|\Ru_{q}\|_{C_{t}L_{x}^{1}}^{2\a+3})\ell^{-7}r_{\perp}^{\frac{2}{\rho}}r_{\parallel}^{\frac{1}{\rho}-\frac{3}{2}}\laq^{2\a-1}\tau^{-\frac{1}{2}},\\
	&\|\mathcal{R}\nu(-\Delta)^{\a}\omw_{q+1}^{(t)}\|_{L^{1}_{(\frac{\varsigma_{q}}{2}, T]}L_x^{\rho}}
	\lesssim (1+\|\Ru_{q}\|_{C_{t}L_{x}^{1}}^{2\a+3})\ell^{-14}\mu^{-1}r_{\perp}^{\frac{2}{\rho}-2}r_{\parallel}^{\frac{1}{\rho}-1}\laq^{2\a-1},\\
	&\|\mathcal{R}\nu(-\Delta)^{\a}\omw_{q+1}^{(o)}\|_{L^{1}_{(\frac{\varsigma_{q}}{2}, T]}L_x^{\rho}}
	\lesssim
	(1+\|\Ru_{q}\|_{C_{t}L_{x}^{1}}^{2\a+4})\ell^{-6(2\a-1)-20}\sigma^{-1}.
	\end{align*}
Noting that $	\ell^{-7}r_{\perp}^{\frac{2}{\rho}-1}r_{\parallel}^{\frac{1}{\rho}-\frac{1}{2}}\laq^{2\a-1}\tau^{-\frac{1}{2}}=\ell^{-7}\lambda_{q+1}^{-\frac{1}{2}\varepsilon}$ and $\ell^{-14}\mu^{-1}r_{\perp}^{\frac{2}{\rho}-2}r_{\parallel}^{\frac{1}{\rho}-1}\laq^{2\a-1}=\ell^{-14}\lambda_{q+1}^{-\varepsilon}$, we thus get
\begin{align}\label{4-29}
\|\mathcal{R}\nu(-\Delta)^{\a}w_{q+1}\|_{L^{1}_{(\frac{\varsigma_{q}}{2}, T]}L_x^{\rho}}
	&\lesssim (1+\|\Ru_{q}\|_{C_{t}L_{x}^{1}}^{2\a+4})\ell^{-7}r_{\perp}^{\frac{2}{\rho}-1}r_{\parallel}^{\frac{1}{\rho}-\frac{1}{2}}\laq^{2\a-1}\tau^{-\frac{1}{2}}.
\end{align}

\medskip 
\paragraph{\bf  Control of noise term.}
For the noise term,  we use  the standard mollification estimates and (\ref{2-24}) to obtain
 \begin{align}\label{4-30}
 \|\mathcal{R}(z_{\ell}-z_{q+1})\|_{L_{(\frac{\varsigma_{q}}{2}, T]}^{1}L_x^{\rho}} 
 	&\lesssim
 \|z_{\ell}^{u}-z^{u}\|_{L_{(\frac{\varsigma_{q}}{2}, T]}^{1}L_x^{\rho}}
 +\|Z_{\ell}-Z_{q}\|_{L_{t}^{1}L_x^{\rho}} 
 +\|Z_{q}-Z_{q+1}\|_{L_{t}^{1}L_x^{\rho}} \notag\\
 	&\lesssim
 	\ell^{\frac{1}{2}}(\|z^{u}\|_{C^{\frac{1}{2}}_{(\frac{\varsigma_{q}}{2}-\ell, T]} L_{x}^{2}}+\|z^{u}\|_{C_{(\frac{\varsigma_{q}}{2}, T]} H_{x}^{\frac{1}{2}}})+
 	\ell^{\frac{1}{2}-\delta}\|Z\|_{C^{\frac{1}{2}-\delta}_{t}L_{x}^{2}}+\la^{-15(1-\delta)}\|Z\|_{C_{t} H_{x}^{1-\delta}},
 \end{align}
where the last was due to $\|Z_{q}-Z_{q+1}\|_{L_{t}^{1}L_x^{\rho}}\lesssim \la^{-15(1-\delta)}\|Z\|_{C_{t} H_{x}^{1-\delta}}$.

Note that, by 
{\citep[(5.31), (5.34)]{CLZ}},  for $s_{1},~s_{2}\in (0,1)$,
\begin{align}\label{4-31}
\|z^{u}\|_{C_{(\frac{\varsigma_{q}}{2}, T]} H_{x}^{s_{1}}}\lesssim (1+\varsigma_{q}^{-\frac{s_{1}}{2\a}})M,\quad \|z^{u}\|_{C_{(\frac{\varsigma_{q}}{2}-\ell,T]}^{s_{2}}L_{x}^{2}}
	\lesssim (1+\varsigma_{q}^{-s_{2}}) M.
\end{align}
Thus, it follows from (\ref{4-30}) and (\ref{4-31}) that
\begin{align}\label{4-32}
\|\mathcal{R}\(z_{\ell}-z_{q+1}\)\|_{L_{(\frac{\varsigma_{q}}{2}, T]}^{1}L_x^{\rho}} 
&\lesssim
	\ell^{\frac{1}{2}}(1+\varsigma_{q}^{-\frac{1}{2}})
	M+\la^{-15(1-\delta)}(\|Z\|_{C^{\frac{1}{2}-\delta}_{t} L_{x}^{2}}+\|Z\|_{C_{t} H_{x}^{1-\delta}}).
\end{align}

\medskip 
\paragraph{\bf   Control of  remaining terms:}
Regarding the remaining terms in  (\ref{4-2}), estimating as in (\ref{4-29}) we have
\begin{align}\label{4-33}		
     &\quad\|(v_{\ell}+z_{\ell})\mathring{\otimes}w_{q+1}+w_{q+1}\mathring{\otimes}( v_{\ell}+z_{\ell})\|_{L_{(\frac{\varsigma_{q}}{2}, T]}^{1}L_x^{\rho}}\notag\\ 
	&\lesssim
	(\|v_{q}\|_{C_{t,x}}+\|z_{q}\|_{C_{(\frac{\varsigma_{q}}{2}-\ell, T]}L_{x}^{\infty}})\|w_{q+1}\|_{L_{t}^{1}L_x^{\rho}} \notag\\
		&\lesssim
		(\|v_{q}\|_{C_{t,x}}
		+\|z^{u}\|_{C_{(\frac{\varsigma_{q}}{2}-\ell,T]}L_{x}^{\infty}}
		+\|Z_{q}\|_{C_{t}L_{x}^{\infty} })
		(1+\|\Ru_{q}\|_{C_{t}L_{x}^{1}}^{5})\ell^{-20}\sigma^{-1}.
\end{align}
Note that, by 
{\citep[(5.34)]{CLZ}},  
\begin{align}\label{4-34}
	\|z^{u}\|_{C_{(\frac{\varsigma_{q}}{2}-\ell, T]} L_{x}^{\infty}}
	\lesssim \varsigma_{q}^{-\frac{3}{4\a}}M,
\end{align}	
which along with \eqref{4-33}  yields that
\begin{align}\label{4-35}
\text{R.H.S.\ of\ (\ref{4-33})}
	\lesssim
	(\|v_{q}\|_{C_{t,x}}+\varsigma_{q}^{-\frac{3}{4\a}}M+\|Z_{q}\|_{C_{t}L_{x}^{\infty} })(1+\|\Ru_{q}\|_{C_{t}L_{x}^{1}}^{5})\ell^{-20}\sigma^{-1}.
\end{align}

Now,  summing up  (\ref{4-28-0}), (\ref{4-29}), (\ref{4-32}) and  (\ref{4-35}) we obtain
\begin{align*} 
		\|\Ru_{lin}\|_{L_{(\frac{\varsigma_{q}}{2}, T]}^{1}L_x^{1}}
		&\lesssim
	   	(1+\|\Ru_{q}\|_{C_{t}L_{x}^{1}}^{4})\ell^{-13}r_{\perp}^{\frac{2}{\rho}}r_{\parallel}^{\frac{1}{\rho}-\frac{3}{2}}\mu	\tau^{-\frac{1}{2}}
	   	+(1+\|\Ru_{q}\|_{C_{t}L_{x}^{1}}^{2\a+4})\ell^{-7}r_{\perp}^{\frac{2}{\rho}-1}r_{\parallel}^{\frac{1}{\rho}-\frac{1}{2}}\laq^{2\a-1}\tau^{-\frac{1}{2}}\\
		&\quad
		+\ell^{\frac{1}{2}}(1+\varsigma_{q}^{-\frac{1}{2}})
		M+\la^{-15(1-\delta)}(\|Z\|_{C^{\frac{1}{2}-\delta}_{t} L_{x}^{2}}+\|Z\|_{C_{t} H_{x}^{1-\delta}})\\
		&\quad+	(\|v_{q}\|_{C_{t,x}}+\varsigma_{q}^{-\frac{3}{4\a}}M+\|Z_{q}\|_{C_{t}L_{x}^{\infty} })(1+\|\Ru_{q}\|_{C_{t}L_{x}^{1}}^{5})\ell^{-20}\sigma^{-1},
\end{align*}
which along with (\ref{2-7}),  (\ref{2-14}), (\ref{2-15}), (\ref{2-22}) and (\ref{3-1})  yields that
\begin{align}\label{4-36}
		\|\Ru_{lin}\|_{\Lo^{r}_{\om} L_{(\frac{\varsigma_{q}}{2}, T]}^{1}L_{x}^{1}}
		&\lesssim
		 (1+\|\Ru_{q}\|_{\Lo^{4r}_{\om}C_{t}L_{x}^{1}}^{4})\ell^{-13}r_{\perp}^{\frac{1}{\rho}}r_{\parallel}^{\frac{1}{\rho}-\frac{3}{2}}\mu\tau^{-\frac{1}{2}}
	    +(1+\|\Ru_{q}\|_{\Lo^{8r}_{\om}C_{t}L_{x}^{1}}^{8})\ell^{-7}r_{\perp}^{\frac{2}{\rho}-1}r_{\parallel}^{\frac{1}{\rho}-\frac{1}{2}}\laq^{2\a-1}\tau^{-\frac{1}{2}} \notag\\
		&\quad
		+\ell^{\frac{1}{2}}(1+\varsigma_{q}^{-\frac{1}{2}})M
		+\la^{-15(1-\delta)}(\|Z\|_{\Lo^{2r}_{\om}C_{t}^{1/2-\delta}L_{x}^{2} }+\|Z\|_{\Lo^{2r}_{\om}C_{t}H_{x}^{1-\delta}})\notag\\
		&\quad
		+(\|v_{q}\|_{\Lo^{2r}_{\om}C_{t,x}}+\varsigma_{q}^{-\frac{3}{4\a}}M+\|Z_{q}\|_{\Lo^{2r}_{\om}C_{t}L_{x}^{\infty}} )
	(1+\|\Ru_{q}\|_{\Lo^{10r}_{\om}C_{t}L_{x}^{1}}^{5})\ell^{-20}\sigma^{-1}\notag\\
		 &\lesssim
		\dqqq.
\end{align}

\subsubsection{Oscillation error}
Let us now turn to the oscillation errors in \eqref{4-8}-\eqref{4-11}. First, by (\ref{3-49}), (\ref{3-50}), (\ref{3-58}), (\ref{3-59})  and (\ref{4-8}),  we get
\begin{align}\label{4-37-1}
	\|\Ru_{osc.1}\|_{L_{(\frac{\varsigma_{q}}{2}, T]}^{1}L_{x}^{1}}
	&\lesssim
\|1-\Theta_{q+1}^{2}\|_{L_{t}^{1}}\|\Ru_{q}\|_{C_{t}L_{x}^{1}}+\|\Theta_{q+1}^{2}\|_{C_{t}^{1}}(\|w^{(t)}_{q+1}\|_{L_{t}^{1} L_{x}^{\rho}}+\|w^{(o)}_{q+1})\|_{L_{t}^{1} L_{x}^{\rho}})\notag\\
	&\lesssim
	\varsigma_{q}\|\Ru_{q}\|_{C_{t}L_{x}^{1}}+\varsigma_{q}^{-1}(1+\|\Ru_{q}\|_{C_{t}L_{x}^{1}}^{5})(\ell^{-14}\mu^{-1}r_{\perp}^{\frac{2}{\rho}-2}r_{\parallel}^{\frac{1}{\rho}-1}+\ell^{-20}\sigma^{-1})\notag\\
	&\lesssim
	\varsigma_{q}\|\Ru_{q}\|_{C_{t}L_{x}^{1}}+(1+\|\Ru_{q}\|_{C_{t}L_{x}^{1}}^{5})\ell^{-20}\sigma^{-1}\varsigma_{q}^{-1}.
\end{align}

For the high-low spatial oscillation  error $\Ru_{osc.2}$ in  (\ref{4-9}),
 we need the following lemma:
 
 \begin{lemma}[\citep{LT-hyperviscous NS}] 
 \label{Lemma commutator estimate1}
 Let $a \in C^2\left(\mathbb{T}^3\right)$. For all $1<p<+\infty$, we have
\begin{equation}\label{6.8}
	\begin{aligned}
	\||\nabla|^{-1} \mathbb{P}_{\neq 0}\left(a \mathbb{P}_{\geq k} f\right)\|_{L^p\left(\mathbb{T}^3\right)} \lesssim k^{-1}\|\nabla^2 a\|_{L^{\infty}\left(\mathbb{T}^3\right)}\|f\|_{L^p\left(\mathbb{T}^3\right)},
\end{aligned}
\end{equation}
	holds for any smooth function $f \in L^p\left(\mathbb{T}^3\right)$.
\end{lemma}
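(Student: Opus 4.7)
The plan is to localize in Fourier: decompose $a=\sum_{j\geq 0}\mathbb{P}_j a$ via a standard Littlewood--Paley partition of unity (with $\mathbb{P}_j$ projecting onto frequencies of size $\sim 2^j$, the zero-mode of $a$ being immaterial because of the outer $\mathbb{P}_{\neq 0}$ in the relevant application), and split the corresponding sum
\begin{equation*}
|\nabla|^{-1}\mathbb{P}_{\neq 0}\bigl(a\,\mathbb{P}_{\geq k}f\bigr)=\sum_{j\geq 0}|\nabla|^{-1}\mathbb{P}_{\neq 0}\bigl(\mathbb{P}_j a\,\mathbb{P}_{\geq k}f\bigr)
\end{equation*}
into the regimes $2^j<k/4$ and $2^j\geq k/4$, estimating each differently.

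For the low-frequency bands $2^j<k/4$, the convolution identity in Fourier forces $\mathbb{P}_j a\,\mathbb{P}_{\geq k}f$ to be spectrally supported in $\{|\xi|\geq k/2\}$: whenever $\widehat{\mathbb{P}_j a}(\xi-\eta)\neq 0$ one has $|\xi-\eta|\lesssim 2^j<k/4$, while $\widehat{\mathbb{P}_{\geq k}f}(\eta)\neq 0$ forces $|\eta|\geq k$, so $|\xi|\geq |\eta|-|\xi-\eta|\geq k/2$. Hence on this piece $|\nabla|^{-1}\mathbb{P}_{\neq 0}$ acts as $|\nabla|^{-1}\mathbb{P}_{\geq k/2}$, whose $L^p$-operator norm is $\lesssim k^{-1}$ by a rescaled Mikhlin--H\"ormander estimate on the torus. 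Combined with the Bernstein bound $\|\mathbb{P}_j a\|_{L^\infty}\lesssim 2^{-2j}\|\nabla^2 a\|_{L^\infty}$ and the uniform $L^p$-boundedness of $\mathbb{P}_{\geq k}$, the contribution of this band is $\lesssim k^{-1}\,2^{-2j}\|\nabla^2 a\|_{L^\infty}\|f\|_{L^p}$; summing the geometric series in $j\geq 0$ yields precisely the target $k^{-1}\|\nabla^2 a\|_{L^\infty}\|f\|_{L^p}$.

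For the high-frequency bands $2^j\geq k/4$ one uses only that $|\nabla|^{-1}\mathbb{P}_{\neq 0}$ is itself $L^p$-bounded (its symbol $|\xi|^{-1}\mathbf{1}_{\xi\neq 0}$ satisfies Mikhlin's condition on $\mathbb{Z}^3\setminus\{0\}$) together with H\"older and Bernstein to bound the $j$-th term by $\lesssim 2^{-2j}\|\nabla^2 a\|_{L^\infty}\|f\|_{L^p}$. The geometric sum over $2^j\geq k/4$ yields $\lesssim k^{-2}\|\nabla^2 a\|_{L^\infty}\|f\|_{L^p}$, strictly better than needed and absorbed into the main bound. The step I expect to be most delicate is the uniform-in-$k$ Mikhlin bound $\||\nabla|^{-1}\mathbb{P}_{\geq k/2}\|_{L^p\to L^p}\lesssim k^{-1}$ on $\mathbb{T}^3$, where one must carefully rescale the multiplier $k|\xi|^{-1}\mathbf{1}_{|\xi|\geq k/2}$ to a symbol satisfying Mikhlin's condition uniformly in $k$; once that is in hand, the dyadic assembly of the two regimes is routine.
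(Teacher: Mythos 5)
Your overall strategy — decompose $a$ into Fourier bands, gain the factor $k^{-1}$ from $|\nabla|^{-1}$ whenever the output frequency is forced to be $\gtrsim k$, and pay for each band of $a$ via Bernstein — is in spirit the standard argument behind this lemma (the cited reference runs it with the single split $a=\mathbb{P}_{<k/2}a+\mathbb{P}_{\geq k/2}a$ rather than a full dyadic sum). However, there is a genuine gap at the band of $a$ containing the zero frequency. For $j=0$ the reverse Bernstein bound $\|\mathbb{P}_0 a\|_{L^\infty}\lesssim\|\nabla^2 a\|_{L^\infty}$ is false (constants are annihilated by $\nabla^2$), and your dismissal of this block — that the zero mode of $a$ is ``immaterial because of the outer $\mathbb{P}_{\neq 0}$'' — does not work: $\mathbb{P}_{\neq 0}$ removes the mean of the product $a\,\mathbb{P}_{\geq k}f$, not the contribution of the mean of $a$; indeed $\overline{a}\,\mathbb{P}_{\geq k}f$ is spectrally supported in $\{|\xi|\geq k\}$ and passes through $\mathbb{P}_{\neq 0}$ unchanged. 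This gap cannot be closed within your scheme, because the inequality with only $\|\nabla^2 a\|_{L^\infty}$ on the right is false as literally stated: take $a\equiv 1$ and $f(x)=\cos(k x_1)$, for which the left-hand side equals $k^{-1}\|f\|_{L^p}>0$ while the right-hand side vanishes.

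What your argument does prove (and what the usual proof gives) is the version in which the low-frequency piece is paid at $\|a\|_{L^\infty}$: the term $\mathbb{P}_{<k/2}a\cdot\mathbb{P}_{\geq k}f$ has output frequencies $\geq k/2$, hence contributes $\lesssim k^{-1}\|a\|_{L^\infty}\|f\|_{L^p}$, while $\mathbb{P}_{\geq k/2}a\cdot\mathbb{P}_{\geq k}f$ contributes $\lesssim k^{-2}\|\nabla^2 a\|_{L^\infty}\|f\|_{L^p}$ by Bernstein, so that altogether one gets $\lesssim k^{-1}\|a\|_{C^2}\|f\|_{L^p}$. Your treatment of the bands $1\leq j$ with $2^j<k/4$ and of the regime $2^j\geq k/4$, as well as the uniform Mikhlin bound $\||\nabla|^{-1}\mathbb{P}_{\geq k/2}\|_{L^p\to L^p}\lesssim k^{-1}$ on the torus, are fine; only the $j=0$ band is the obstruction. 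This weaker (correct) form is in fact all that the paper needs, since in \eqref{4-37-2} and \eqref{6-37-2} the amplitudes are estimated through $\|a_{(k)}^2\|_{C_{t,x}^{3}}$, which controls the missing $L^\infty$ contribution.
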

 
 Applying  Lemmas \ref{Lemma spatial building blocks}, \ref{Lemma temporal building blocks}, \ref{Lemma amplitudes 1} and \ref{Lemma commutator estimate1} we  obtain
\begin{align}\label{4-37-2}
	\|\Ru_{osc.2}\|_{L_{(\frac{\varsigma_{q}}{2}, T]}^{1} L_{x}^{\rho}}
		&\lesssim
		\sum_{k\in\Lambda}\|\Theta_{q+1}^{2}\|_{C_{t}}\|g_{(k)}^{2}\|_{L_t^{1}}
		\||\nabla|^{-1}\mathbb{P}_{\neq 0}(\mathbb{P}_{\geq \frac{\laq r_{\perp}}{2} }(W_{(k)}\otimes W_{(k)}))\nabla(a_{(k)}^{2})\|_{C_{t} L_{x}^{\rho}}\notag\\
		&\lesssim
		\sum_{k\in\Lambda}\|a_{(k)}^{2}\|_{C_{t,x}^{3}}\(\laq r_{\perp}\)^{-1}\|\psi_{(k_{1})}^{2}\|_{C_{t}L_{x}^{\rho}}\|\phi_{(k)}^{2}\|_{L_{x}^{\rho}}\notag\\
		&\lesssim
		(1+\|\Ru_{q}\|_{C_{t}L_{x}^{1}}^{7})\ell^{-32}r_{\perp}^{\frac{2}{\rho}-3}r_{\parallel}^{\frac{1}{\rho}-1}\lambda_{q+1}^{-1}.
\end{align}

 By Lemmas \ref{Lemma spatial building blocks}, \ref{Lemma temporal building blocks}, \ref{Lemma amplitudes 1}  and (\ref{4-10}), we also have
\begin{align}\label{4-37-3}
	\|\Ru_{osc.3}\|_{L_{(\frac{\varsigma_{q}}{2}, T]}^{1} L_{x}^{\rho}}
	&\lesssim
	\mu^{-1}\|\Theta_{q+1}^{2}\|_{C_{t}}\sum_{k\in\Lambda}\| \mathcal{R}\mathbb{P}_{\neq 0}(\p_{t}(a_{(k)}^{2}g_{(k)}^{2})\psi_{(k_{1})}^{2}\phi_{(k)}^{2})\|_{C_{t} L_{x}^{\rho}}\notag\\
	&\lesssim
	\mu^{-1}\sum_{k\in\Lambda}\|a_{(k)}^{2}\|_{C_{t,x}^{1}}\|g_{(k)}^{2}\|_{W_{t}^{1,1}}\|\psi_{(k_{1})}^{2}\|_{C_{t} L_{x}^{\rho}}\|\phi_{(k)}^{2})\|_{L_{x}^{\rho}}\notag\\
	&\lesssim
	(1+\|\Ru_{q}\|_{C_{t}L_{x}^{1}}^{5})\ell^{-20}\mu^{-1}r_{\perp}^{\frac{2}{\rho}-2}r_{\parallel}^{\frac{1}{\rho}-1}\sigma\tau.
\end{align}

Moreover, for the low frequence error, by Lemmas \ref{Lemma temporal building blocks}, \ref{Lemma amplitudes 1}  and \eqref{4-11}, 
\begin{align}\label{4-37-4}
		\|\Ru_{osc.4}\|_{L_{(\frac{\varsigma_{q}}{2}, T]}^{1} L_{x}^{\rho}}
		&\lesssim
		\sigma^{-1}\|\Theta_{q+1}^{2}\|_{C_{t}}\sum_{k\in\Lambda}\|h_{(k)}\|_{L_t^{1}}
		\|a_{(k)}^{2}\|_{C_{t,x}^{2}}\notag\\
		&\lesssim
	(1+\|\Ru_{q}\|_{C_{t}L_{x}^{1}}^{6})\ell^{-26}\sigma^{-1}.
\end{align}

Thus, combining \eqref{4-37-1}, \eqref{4-37-2}-\eqref{4-37-4} altogether, we come to
\begin{align*}
	\|\Ru_{osc}\|_{L_{(\frac{\varsigma_{q}}{2}, T]}^{1}L_x^{1}}
	&\lesssim
	\varsigma_{q}\|\Ru_{q}\|_{C_{t}L_{x}^{1}}+(1+\|\Ru_{q}\|_{C_{t}L_{x}^{1}}^{7})\ell^{-32}r_{\perp}^{\frac{2}{\rho}-3}r_{\parallel}^{\frac{1}{\rho}-1}\lambda_{q+1}^{-1},
\end{align*}
which, via (\ref{2-7}), (\ref{2-15}) and (\ref{3-1}), yields that
\begin{align}\label{4-37}
	\|\Ru_{osc}\|_{\Lo^{r}_{\om}L_{(\frac{\varsigma_{q}}{2}, T]}^{1}L_{x}^{1}}
	&\lesssim
\varsigma_{q}\|\Ru_{q}\|_{\Lo^{r}_{\om}C_{t}L_{x}^{1}}
+(1+\|\Ru_{q}\|_{\Lo^{7r}_{\om}C_{t}L_{x}^{1}}^{7})\ell^{-32}r_{\perp}^{\frac{2}{\rho}-3}r_{\parallel}^{\frac{1}{\rho}-1}\lambda_{q+1}^{-1}
	\lesssim
	\dqqq.
\end{align}

\subsubsection{Corrector error}
By (\ref{3-49}), (\ref{3-50}),  (\ref{4-3}) and Lemma \ref{Lemma Estimates of perturbations},
\begin{align*}
		\|\Ru_{corr}\|_{L_{(\frac{\varsigma_{q}}{2}, T]}^{1}L_x^{1}}
		&\lesssim
	\|\omw_{q+1}^{(c)+(t)+(o)}\|_{L_t^{2}L_x^{2}}	(\|w_{q+1}\|_{L_t^{2}L_x^{2}}+\|\omw_{q+1}^{(p)}\|_{L_t^{2}L_x^{2}})
		\notag\\
		&\lesssim
		(1+\|\Ru_{q}\|_{C_{t}L_{x}^{1}}^{10})\ell^{-27}\sigma^{-1}.
\end{align*}
Taking into account  (\ref{2-7}), (\ref{2-15}) and (\ref{3-1})  we get
\begin{align}\label{4-38}
		\|\Ru_{corr}\|_{\Lo^{r}_{\om}L_{(\frac{\varsigma_{q}}{2}, T]}^{1}L_{x}^{1}}
		&\lesssim
	\big(\lambda_{q}^{(4\a+12)}\(8\cdot 10rL^{2}50^{q}\)^{50^{q}}\big)^{10}\ell^{-27}\laq^{-2\varepsilon}
		\lesssim
		\dqqq.
\end{align}

\subsubsection{ Commutator errors}
By the standard mollification estimates, (\ref{2-27}) and  (\ref{4-31}), it holds that
\begin{align*}
		\|\Ru_{com1}\|_{L_{(\frac{\varsigma_{q}}{2}, T]}^{1}L_x^{1}}
		&\lesssim
			\ell^{\frac{1}{2}-\delta}(\|v_{q}\|_{C_{t,x}^{1}}
			+\|z\|_{C_{(\frac{\varsigma_{q}}{2}-\ell, T]}^{\frac{1}{2}-\delta}L_{x}^{2}}
			+\|z\|_{C_{(\frac{\varsigma_{q}}{2}, T]} H_{x}^{1-\delta}})(\|v_{q}\|_{C_{t,x}}+\|z\|_{C_{t} L_{x}^{2}})\notag\\
		&\lesssim
		\ell^{\frac{1}{2}-\delta}	\big(\|v_{q}\|_{C_{t,x}^{1}}+	(1+\varsigma_{q}^{-(\frac{1}{2}-\delta)})M
		+\|Z\|_{C_{t}^{\frac{1}{2}-\delta} L_{x}^{2}}+\|Z\|_{C_{t} H_{x}^{1-\delta}}\big)
		(\|v_{q}\|_{C_{t,x}}+\|z\|_{C_{t} L_{x}^{2}}).
\end{align*}
Thus,  by (\ref{stochastic evolution}), (\ref{2-7}), (\ref{2-14}) and (\ref{3-1}), we get
\begin{align}\label{4-39}
\|\Ru_{com1}\|_{\Lo^{r}_{\om}L_{(\frac{\varsigma_{q}}{2}, T]}^{1}L_{x}^{1}}
	&\lesssim
	\ell^{\frac{1}{2}-\delta}\big(\|v_{q}\|_{\Lo^{2r}_{\om}C_{t,x}^{1}}+(1+\varsigma_{q}^{-(\frac{1}{2}-\delta)})M
	+\|Z\|_{\Lo^{2r}_{\om}C_{t}^{\frac{1}{2}-\delta}L_{x}^{2}}+\|Z\|_{\Lo^{2r}_{\om}C_{t}H_{x}^{1-\delta}}\big)\notag\\
	&\quad
	\times(\|v_{q}\|_{\Lo^{2r}_{\om}C_{t,x}}+\|z\|_{\Lo^{2r}_{\om}C_{t} L_{x}^{2}})\notag\\
	&\lesssim
	\dqqq.		
\end{align}

Concerning the second commutator error $ \Ru_{com2} $,
by (\ref{3-55}), (\ref{3-80}), (\ref{4-4}),  (\ref{4-31}) and the mollification estimate,
\begin{align*}
		\|\Ru_{com2}\|_{L_{(\frac{\varsigma_{q}}{2}, T]}^{1}L_x^{1}}
		&\lesssim
		(\|v_{q+1}\|_{L_{(\frac{\varsigma_{q}}{2}, T]}^{1}L_{x}^{2}}+\|z_{\ell}\|_{L_{(\frac{\varsigma_{q}}{2}, T]}^{1}L_{x}^{2}}+\|z_{q+1}\|_{L_{(\frac{\varsigma_{q}}{2}, T]}^{1}L_{x}^{2}})\notag\\
		&\quad\times
		(\|z_{\ell}^{u}- z^{u}\|_{C_{(\frac{\varsigma_{q}}{2}, T]} L_{x}^{2}}
		+\|Z_{\ell}- Z_{q+1}\|_{C_{(\frac{\varsigma_{q}}{2}, T]} L_{x}^{2}})\notag\\
		&\lesssim
		(\|v_{q}\|_{C_{t,x}}+\|w_{q+1}\|_{L_{t}^{1}L_{x}^{2}}+\|z\|_{C_{(\frac{\varsigma_{q}}{2}-\ell, T]}L_{x}^{2}})\notag\\
	&\quad\times \big(\ell^{\frac{1}{2}}(\|z^{u}\|_{C_{(\frac{\varsigma_{q}}{2}-\ell, T]}^{\frac{1}{2}} L_{x}^{2}}
	+\|z^{u}\|_{C_{t} H_{x}^{\frac{1}{2}}})+\ell^{\frac{1}{2}-\delta}\|Z\|_{C_{t}^{\frac{1}{2}-\delta} L_{x}^{2}}+\la^{-15(1-\delta)}\|Z\|_{C_{t} H_{x}^{1-\delta}}\big)\notag\\
		&\lesssim
		\big(\|v_{q}\|_{C_{t,x}}
		+(1+\|\Ru_{q}\|_{C_{t}L_{x}^{1}}^{5})\ell^{-20}\sigma^{-1}+\|z\|_{C_{(\frac{\varsigma_{q}}{2}-\ell, T]}L_{x}^{2}}\big)\notag\\
		&\quad\times \big(\ell^{\frac{1}{2}}(1+\varsigma_{q}^{-\frac{1}{2}})M+\la^{-15(1-\delta)}(\|Z\|_{C_{t}^{\frac{1}{2}-\delta} L_{x}^{2}}+\|Z\|_{C_{t} H_{x}^{1-\delta}})\big),
\end{align*}
which, via (\ref{stochastic evolution}), (\ref{2-7}), (\ref{2-14}), (\ref{2-15}) and (\ref{3-1}), yields that
\begin{align}\label{4-40}
	&\quad\|\Ru_{com2}\|_{\Lo^{r}_{\om}L_{(\frac{\varsigma_{q}}{2}, T]}^{1}L_{x}^{1}}\notag\\
		&\lesssim
		\big(\|v_{q}\|_{\Lo^{2r}_{\om}C_{t,x}}+(1+\|\Ru_{q}\|_{\Lo^{10r}_{\om}C_{t}L_{x}^{1}}^{5})\ell^{-20}\sigma^{-1}+\|z\|_{\Lo^{2r}_{\om}C_{(\frac{\varsigma_{q}}{2}-\ell, T]} L_{x}^{2}}\big)\notag\\
		&\quad\times
		\big(\ell^{\frac{1}{2}}(1+\varsigma_{q}^{-\frac{1}{2}})M+\la^{-15(1-\delta)}(\|Z\|_{\Lo^{2r}_{\om}C_{t}^{\frac{1}{2}-\delta}L_{x}^{2}}+\|Z\|_{\Lo^{2r}_{\om}C_{t}H_{x}^{1-\delta}})\big)\notag\\
		&\lesssim
		\dqqq.
\end{align}

Therefore,  combining (\ref{4-36}) and \eqref{4-37}-\eqref{4-40} altogether,  we conclude that
	\begin{align*}
		\|\Ru_{q+1}\|_{\Lo^{r}_{\om}L_{(\frac{\varsigma_{q}}{2}, T]}^{1}L_{x}^{1}}\lesssim \dqqq.
\end{align*}
This verifies the decay estimate \eqref{2-16} at level $q+1$ in the temporal regime $(\frac{\varsigma_{q}}{2}, T]$ away from the initial time.

\subsection{Verification of inductive estimate \eqref{2-16}: near the initial time} 
We continue to verify the inductive estimate (\ref{2-16})  
in the regime $[0, \frac{\varsigma_{q}}{2}]$ near the initial time. 

In this case,  $\Theta_{q+1}(t)=0$ and so $w_{q+1}=0$. Then, by  (\ref{4-1}), we get
\begin{align}\label{4-42}
	\Ru_{q+1}&=\mathcal{R}\(z_{\ell}- z_{q+1}\)+\Ru_{\ell}+	(v_{\ell}+z_{\ell})\mathring{\otimes}(v_{\ell}+z_{\ell})-\left((v_{q}+z_{q})\mathring{\otimes}(v_{q}+z_{q})\right)*_x \varrho_{\ell} *_t \vartheta_{\ell}\notag\\
	&\quad+v_{\ell}\mathring{\otimes}z_{q+1}
	+z_{q+1}\mathring{\otimes} v_{\ell}
	-v_{\ell}\mathring{\otimes}z_{\ell}
	-z_{\ell}\mathring{\otimes} v_{\ell}
	+z_{q+1}\mathring{\otimes} z_{q+1}
	-z_{\ell}\mathring{\otimes}z_{\ell}.
\end{align}
Then, by (\ref{4-42}) and H\"{o}lder's inequality, we obtain
\begin{align}\label{4-42-1}
\|\Ru_{q+1}\|_{ L_{[0, \frac{\varsigma_{q}}{2}]}^{1}L_{x}^{1}}
&\lesssim \|z\|_{L_{[0, \frac{\varsigma_{q}}{2}]}^{1}L_{x}^{2}}
+\|z_{\ell}\|_{L_{[0, \frac{\varsigma_{q}}{2}]}^{1}L_{x}^{2}}
+\|\Ru_{\ell}\|_{ L_{[0,\frac{\varsigma_{q}}{2}]}^{1}L_{x}^{1}}\notag\\
&\quad+(\|v_{q}\|_{L_{[0, \frac{\varsigma_{q}}{2}]}^{1}L_{x}^{2}}
+\|z\|_{L_{[0, \frac{\varsigma_{q}}{2}]}^{1}L_{x}^{2}})
(\|v_{q}\|_{C_{[0, \frac{\varsigma_{q}}{2}]}L_{x}^{2}}
+\|z\|_{C_{[0, \frac{\varsigma_{q}}{2}]}L_{x}^{2}})
\notag\\
&\quad+
(\|v_{q}\|_{L_{[0, \frac{\varsigma_{q}}{2}]}^{1}L_{x}^{2}}+\|z\|_{L_{[0, \frac{\varsigma_{q}}{2}]}^{1}L_{x}^{2}})\|z\|_{C_{[0, \frac{\varsigma_{q}}{2}]}L_{x}^{2}}\notag\\
	&\lesssim
	\varsigma_{q}(\|z\|_{C_{[0, \frac{\varsigma_{q}}{2}]}L_{x}^{2}}+\|\Ru_{q}\|_{ C_{[0, \frac{\varsigma_{q}}{2}]}L_{x}^{1}}
	+\|v_{q}\|_{C_{[0, \frac{\varsigma_{q}}{2}],x}}^{2}+\|z\|_{C_{[0, \frac{\varsigma_{q}}{2}]}L_{x}^{2}}^{2}).
\end{align}
Thus, by (\ref{stochastic evolution}), (\ref{2-7}), (\ref{2-14}), (\ref{2-15}) and (\ref{3-1}), we have
\begin{align}\label{4-42-2}
	\|\Ru_{q+1}\|_{\Lo^{r}_{\om} L_{[0, \frac{\varsigma_{q}}{2}]}^{1}L_{x}^{1}}
	&\lesssim
	\varsigma_{q}(\|z\|_{\Lo^{2r}_{\om}C_{[0, \frac{\varsigma_{q}}{2}]}L_{x}^{2}}+\|\Ru_{q}\|_{ \Lo^{r}_{\om}C_{[0, \frac{\varsigma_{q}}{2}]}L_{x}^{1}}
	+\|v_{q}\|_{\Lo^{2r}_{\om}C_{[0, \frac{\varsigma_{q}}{2}],x}}^{2}+\|z\|_{\Lo^{2r}_{\om}C_{[0, \frac{\varsigma_{q}}{2}]}L_{x}^{2}}^{2})\notag\\
	&\lesssim
	\dqqq.
\end{align}
Therefore, the inductive estimate (\ref{2-16}) at level $q+1$ is verified.

Consequently, we conclude from Sections \ref{Sec-S1} and \ref{Sec-Reynolds} that the main iteration in Proposition \ref{Proposition Main iteration}
 holds in the supercritical regime $\mathcal{S}_{1}$. Thus, we infer from Section \ref{Sec-Main-Iteration} that Theorems \ref{Thm-Nonuniq-Hyper} and \ref{Theorem Vanishing noise} hold in the $\mathcal{S}_{1}$ regime.

\section{Velocity perturbations in the supercritical regime \texorpdfstring{$\mathcal{S}_{2}$}{S2}} \label{Sec-S2}

From this section, we   treat  
the main iteration in the supercritical regime $\mathcal{S}_{2}$. Let us start with the building blocks of the velocity perturbations. Unlike in Section \ref{Sec-S1}, the spatial building blocks are the concertrated Mikado flows introduced in {\citep{DS}}.

\subsection{Mikado flows}  
We use  the spatial-temporal building blocks  indexed by four parameters $r_{\perp}, \lambda, \tau$ and $\sigma$ :
\begin{equation}\label{5-1}
	\begin{aligned}
		r_{\perp}:=\lambda_{q+1}^{- \alpha+1-8 \varepsilon},\quad \lambda:=\lambda_{q+1}, \quad
		\tau:=\lambda_{q+1}^{2 \alpha},\quad \sigma:=\lambda_{q+1}^{2 \varepsilon},
	\end{aligned}
\end{equation}
where $\varepsilon$ satisfies (\ref{2-9}). The corresponding Mikado flows is defined by
	\begin{align}\label{5-2}
	W_{(k)}:=\phi_{r_{\perp}}(\lambda r_{\perp} N_{\Lambda} k \cdot x, \lambda r_{\perp} N_{\Lambda} k_{2} \cdot x) k_1, \quad k \in \Lambda,
\end{align}
where   $\phi_{r_{\perp}}$, $r_{\perp}$, $N_{\Lambda}$ and $(k,k_{1},k_{2})$ are same as in Section \ref{Sec-S1}. Additionally, we also use  the temporal building blocks $g_{(k)}$, $h_{(k)}$ 
as in (\ref{3-19}). 

For the sake of simplicity, we set
\begin{align}
	\phi_{(k)}(x):=\phi_{r_{\perp}}\big(\lambda r_{\perp} N_{\Lambda} k \cdot x,\lambda r_{\perp} N_{\Lambda} k_{2} \cdot x \big),\label{5-3-0}\\ \Phi_{(k)}(x):=\Phi_{r_{\perp}}\big(\lambda r_{\perp} N_{\Lambda} k \cdot x, \lambda r_{\perp} N_{\Lambda} k_{2} \cdot x \big),\label{5-3}
\end{align}
and rewrite
\begin{align}\label{5-4}
	W_{(k)}:=\phi_{(k)}(x) k_1, \quad k \in \Lambda.
\end{align}
{Then, $W_{(k)}=\curl\curl W_{(k)}^{c}$ with}
\begin{align}\label{5-5} 
	W_{(k)}^{c} & :=\frac{1}{\lambda^2 N_{\Lambda}^2} \Phi_{(k)} k_1, \quad k \in \Lambda.
\end{align}

The following  result  gives the analytic estimates of our building blocks.

\begin{lemma} [Estimate of Mikado flows, \textcolor{blue}{\citep{BMS}}] \label{Lemma spatial building blocks2}  

For any $p \in[1, \infty]$ and $N \in \bbn$, one has
	\begin{align}\label{5-6}
		\|\nabla^N \phi_{(k)}\|_{L_x^p}+\|\nabla^N \Phi_{(k)}\|_{L_x^p} \lesssim r_{\perp}^{\frac{2}{p}-1} \lambda^N.
	\end{align}
	In particular,
	\begin{align}\label{5-7}
		&\|\nabla^N W_{(k)}\|_{ L_x^p}+\lambda^2\|\nabla^N W_{(k)}^c\|_{ L_x^p} \lesssim r_{\perp}^{\frac{2}{p}-1} \lambda^N, \quad k \in \Lambda.
	\end{align} 
	The implicit constants above are deterministic and  independent of the parameters $r_{\perp}$ and $\lambda$.
\end{lemma}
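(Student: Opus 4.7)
The plan is to reduce the estimates to explicit scaling computations on the profile $\phi$ and $\Phi$, exploiting the rigid algebraic structure of the Mikado profiles. The key observation is that $\phi_{(k)}$ and $\Phi_{(k)}$ are obtained from $\phi_{r_\perp}$ and $\Phi_{r_\perp}$ by composition with the rescaling map $x \mapsto (\lambda r_\perp N_\Lambda k \cdot x,\, \lambda r_\perp N_\Lambda k_2 \cdot x)$, which depends only on the two components of $x$ in the $(k, k_2)$--plane and is independent of the $k_1$--direction. Since $(k, k_1, k_2) \subset \mathbb{Q}^3$ and $N_\Lambda$ was chosen so that $N_\Lambda k, N_\Lambda k_1, N_\Lambda k_2 \in \mathbb{Z}^3$, and since $\lambda r_\perp$ is an integer for our parameter choice, this composition respects the periodic structure on $\mathbb{T}^3$.

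The first step is to compute $\|\nabla^N \phi_{r_\perp}\|_{L^p(\mathbb{T}^2)}$ directly from the definition $\phi_{r_\perp}(y) = r_\perp^{-1}\phi(y/r_\perp)$. Each derivative yields a factor $r_\perp^{-1}$ and a change of variables $y = r_\perp z$ supplies a Jacobian factor $r_\perp^{2/p}$ at the level of $L^p$ norms, giving
\begin{equation*}
    \|\nabla^N \phi_{r_\perp}\|_{L^p(\mathbb{T}^2)} \lesssim r_\perp^{\frac{2}{p} - 1 - N},
\end{equation*}
and the analogous bound for $\Phi_{r_\perp}$. The second step is to transfer these $\mathbb{T}^2$--estimates to $\mathbb{T}^3$--estimates for $\phi_{(k)}$ and $\Phi_{(k)}$. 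I would perform an orthogonal change of coordinates aligning the $(k, k_1, k_2)$--frame with the standard axes; the integration in the $k_1$ direction produces only a harmless constant by Fubini, while the rescaling in the remaining two directions by integer factor $\lambda r_\perp N_\Lambda$ leaves the $L^p$ norm invariant at the level of $\mathbb{T}^2$.

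Finally, invoking the chain rule, every derivative falling on $\phi_{(k)}$ produces a factor of $\lambda r_\perp N_\Lambda$ times a derivative of $\phi_{r_\perp}$ evaluated at the rescaled point. Combining this with the previous step gives
\begin{equation*}
    \|\nabla^N \phi_{(k)}\|_{L^p_x} \lesssim (\lambda r_\perp)^N \cdot r_\perp^{\frac{2}{p} - 1 - N} = r_\perp^{\frac{2}{p} - 1}\lambda^N,
\end{equation*}
and identically for $\Phi_{(k)}$. The estimate \eqref{5-7} then follows immediately from $|k_1|=1$ and the definitions \eqref{5-4}, \eqref{5-5}, with the factor $\lambda^{-2}$ for $W_{(k)}^c$ coming explicitly from the normalization in \eqref{5-5}.

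I do not anticipate any serious obstacle: the argument is a rigid scaling calculation in the spirit of the estimates for intermittent jets in Lemma \ref{Lemma spatial building blocks}, and the only delicate point is verifying that the periodization on $\mathbb{T}^3$ is compatible with the composition with the linear map induced by $(k, k_1, k_2)$, which is precisely what the choice of $N_\Lambda$ in \eqref{6.3} ensures. The implicit constants depend only on $\|\nabla^N \phi\|_{L^p(\mathbb{R}^2)}$, $\|\nabla^N \Phi\|_{L^p(\mathbb{R}^2)}$, and the geometric constants attached to $\Lambda$, hence are independent of $r_\perp$ and $\lambda$ as claimed.
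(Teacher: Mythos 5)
The paper does not prove this lemma at all; it is quoted from Burczak--Modena--Sz\'ekelyhidi \citep{BMS} (and the analogous statement for intermittent jets from \citep{BV-convex}), so there is no in-paper argument to compare against. Your reconstruction is the standard scaling proof and it is correct: the profile bound $\|\nabla^N\phi_{r_\perp}\|_{L^p(\T^2)}\lesssim r_\perp^{2/p-1-N}$ follows from the $r_\perp^{-1}$ normalization plus the Jacobian factor $r_\perp^{2/p}$, the chain rule through the map $x\mapsto \lambda r_\perp N_\Lambda(k\cdot x,\,k_2\cdot x)$ contributes $(\lambda r_\perp N_\Lambda)^N$, and the product is $r_\perp^{2/p-1}\lambda^N$; the bounds \eqref{5-7} then drop out of \eqref{5-4}--\eqref{5-5} since $|k_1|=1$ and the prefactor $\lambda^{-2}N_\Lambda^{-2}$ in $W^c_{(k)}$ absorbs the extra $\lambda^2$. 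The two points you flag as delicate are indeed the only ones, and both are fine: $\lambda r_\perp=\lambda_{q+1}^{2-\a-8\varepsilon}$ is a positive integer power of $a$ because $b\a,\,b\varepsilon\in\bbn$ and $\varepsilon\le(2-\a)/20$ by \eqref{2-7}--\eqref{2-9}, so the composed function is genuinely $2\pi\Z^3$-periodic; and the transfer of the $L^p$ norm from $\T^2$ to $\T^3$ is cleanest if, instead of rotating the torus (which does not map $\T^3$ to itself), you observe that $x\mapsto(\lambda r_\perp N_\Lambda k\cdot x,\lambda r_\perp N_\Lambda k_2\cdot x)$ is a surjective homomorphism of compact abelian groups with integer frequency vectors, hence pushes normalized Lebesgue measure on $\T^3$ to normalized Lebesgue measure on $\T^2$, giving $\|\phi_{(k)}\|_{L^p(\T^3)}\simeq\|\phi_{r_\perp}\|_{L^p(\T^2)}$ exactly; your Fubini-plus-rotation phrasing is an informal version of this and should be tightened, but it is not a gap in substance.
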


\subsection{Amplitudes} We define the amplitudes of the velocity perturbations by 
\begin{align}\label{5-8}
	a_{(k)}(t,x):=\varrho^{1/2}(t,x)\gamma_{(k)}\big(\mathrm{Id}-\frac{\Ru_{\ell}(t,x)}{\varrho(t,x)}\big), \quad k\in \Lambda,
\end{align}
where $\varrho$ and  $\gamma_{(k)}$ are as in Section \ref{Sec-S1}. Note that, the amplitudes  obey the same estimates as in Lemma \ref{Lemma amplitudes 1}.

\subsection{Velocity perturbations}  The velocity perturbations in the $\mathcal{S}_{2}$ regime consist of three components: the principle part, the incompressibility corrector and the temporal corrector.
\paragraph{\bf Principal part.} 
We define the principle part  of the velocity perturbations by
\begin{align}\label{5-10}
	w_{q+1}^{(p)}:=\sum_{k \in \Lambda} a_{(k)} g_{(k)} W_{(k)}.
\end{align}
Using the Geometric Lemma \ref{Lemma First Geometric} one still has the algebraic identity (\ref{3-43-1}).

\paragraph{\bf Incompressibility corrector.} We define the corresponding incompressibility corrector  by
	\begin{align}\label{5-11}
		w_{q+1}^{(c)} & :=\sum_{k \in \Lambda} g_{(k)}\left(\nabla a_{(k)} \times \curl W_{(k)}^c+\curl(\nabla a_{(k)} \times W_{(k)}^c)\right) .
\end{align}
It holds that
	\begin{align}\label{5-12}
		w_{q+1}^{(p)}+w_{q+1}^{(c)}&=\sum_{k \in \Lambda} \operatorname{curlcurl}(a_{(k)} g_{(k)} W_{(k)}^c),
\end{align}
and so 
\begin{align}\label{5-13}
		\operatorname{div}(w_{q+1}^{(p)}+w_{q+1}^{(c)})=0.
\end{align}
\paragraph{\bf Temporal corrector to balance temporal oscillations.}   In the regime $\mathcal{S}_{2}$, we only use one type of the  temporal corrector  defined  by	\begin{align}\label{5-14}
		w_{q+1}^{(o)}:= & -\sigma^{-1} \sum_{k \in \Lambda} \mathbb{P}_H \mathbb{P}_{\neq 0}\left(h_{(k)} \fint_{\mathbb{T}^3} W_{(k)} \otimes W_{(k)} \mathrm{d} x \nabla(a_{(k)}^2)\right).
\end{align}
One  also has the algebraic identity (\ref{3-48-2}), which permits to balance the high temporal oscillations.
 
Define the cut-off perturbations by
\begin{align}\label{5-15}
	&\widetilde{w}_{q+1}^{(p)}:=\Theta_{q+1} w_{q+1}^{(p)}, \quad \widetilde{w}_{q+1}^{(c)}:=\Theta_{q+1} w_{q+1}^{(c)}, \quad \widetilde{w}_{q+1}^{(o)}:=\Theta_{q+1}^2 w_{q+1}^{(o)},
\end{align}
where $\Theta_{q+1}$ is the temporal cut-off function as in (\ref{3-49}).

Analogously to Subsection \ref{subsec3.3}, we define the velocity perturbations $w_{q+1}$ at level $q+1$ by
\begin{align}\label{5-16}
	w_{q+1}=\widetilde{w}_{q+1}^{(p)}+\widetilde{w}_{q+1}^{(c)}+\widetilde{w}_{q+1}^{(o)},
\end{align}
with
\begin{align}\label{5-17}
	\omw^{(*_{1})+(*_{2})}_{q+1}:=\omw^{(*_{1})}_{q+1}+\omw^{(*_{2})}_{q+1}, \quad\text{where} \quad *_{1},~*_{2}\in \{p, c, o \}. 
\end{align}

At last,  the new velocity  fields at level ${q+1}$ is defined by
\begin{align}\label{5-18}
		v_{q+1}:=v_{\ell}+w_{q+1}.
\end{align}

In the following Lemma, we give the  estimates of the velocity perturbations.
\begin{lemma}[Estimates of perturbations]\label{Lemma Estimates of perturbations 2}
	For any $\rho \in(1, \infty), \gamma \in[1, \infty]$ and every integer $0 \leq N \leq 5$, the following estimates hold :
	\begin{align}
		&\|\nabla^N w_{q+1}^{(p)}\|_{L_t^\gamma L_x^\rho} \lesssim  (1+\|\Ru_{q}\|_{C_{t}L_{x}^{1}}^{N+2})\ell^{-7}r_{\perp}^{\frac{2}{\rho}-1}\laq^{N}\tau^{\frac{1}{2}-\frac{1}{\gamma}},\label{5-19} \\
		&\|\nabla^N w_{q+1}^{(c)}\|_{L_t^\gamma L_x^\rho}
		\lesssim 	(1+\|\Ru_{q}\|_{C_{t}L_{x}^{1}}^{N+4})\ell^{-13}r_{\perp}^{\frac{2}{\rho}-1}\laq^{N-1}\tau^{\frac{1}{2}-\frac{1}{\gamma}},\label{5-20}\\
		&\|\nabla^N w_{q+1}^{(o)}\|_{L_t^\gamma L_x^\rho} \lesssim (1+\|\Ru_{q}\|_{C_{t}L_{x}^{1}}^{N+5})\ell^{-6N-20}\sigma^{-1}.\label{5-21}
	\end{align}
	 Moreover,
	\begin{align}
		&\|w_{q+1}^{(p)}\|_{C_{t,x}^{N}}
		\lesssim
		(1+\|\Ru_{q}\|_{C_{t}L_{x}^{1}}^{N+2})\lambda_{q+1}^{2\a(N+1)+1},\label{5-22}\\
		&\|w_{q+1}^{(c)}\|_{C_{t,x}^{N}}
		\lesssim
		(1+\|\Ru_{q}\|_{C_{t}L_{x}^{1}}^{N+4})\lambda_{q+1}^{2\a(N+1)},\label{5-23}\\	
		&\|w_{q+1}^{(o)}\|_{C_{t,x}^{N}}
		\lesssim
		(1+\|\Ru_{q}\|_{C_{t}L_{x}^{1}}^{N+6})\lambda_{q+1}^{2\a N+1}.\label{5-24}
	\end{align}
	The above implicit constants are deterministic and  independent of $q$.
\end{lemma}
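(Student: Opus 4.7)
The plan is to mirror the proof of Lemma \ref{Lemma Estimates of perturbations}, exploiting the simpler structure of the $\mathcal{S}_2$ building blocks: the Mikado flows $W_{(k)}$ are time-independent (all temporal dependence sits in $g_{(k)}$), and there is no $r_\parallel$ parameter or corrector $w^{(t)}$. Throughout, I will apply the Leibniz rule to distribute derivatives across amplitude, temporal profile, and spatial profile, then combine Lemma \ref{Lemma spatial building blocks2} (Mikado estimates), Lemma \ref{Lemma temporal building blocks} (estimates for $g_{(k)},h_{(k)}$), and the amplitude bounds from Lemma \ref{Lemma amplitudes 1} (which also hold in the $\mathcal{S}_2$ construction since $\varrho$ and $\gamma_{(k)}$ are defined identically).

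First I will establish (\ref{5-19})--(\ref{5-21}). For (\ref{5-19}), writing $w_{q+1}^{(p)}=\sum_k a_{(k)}g_{(k)}W_{(k)}$ and using the time-independence of $W_{(k)}$,
\[
\|\nabla^N w_{q+1}^{(p)}\|_{L_t^\gamma L_x^\rho}\lesssim \sum_{k\in\Lambda}\sum_{N_1+N_2=N}\|a_{(k)}\|_{C_{t,x}^{N_1}}\|\nabla^{N_2}W_{(k)}\|_{L_x^\rho}\|g_{(k)}\|_{L_t^\gamma},
\]
and substituting Lemma \ref{Lemma spatial building blocks2} ($\lesssim r_\perp^{2/\rho-1}\lambda^{N_2}$), Lemma \ref{Lemma temporal building blocks} ($\lesssim \tau^{1/2-1/\gamma}$), and the amplitude bound ($\lesssim (1+\|\Ru_q\|_{C_tL_x^1}^{N_1+2})\ell^{-6N_1-7}$) yields (\ref{5-19}). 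Estimate (\ref{5-20}) will come from the identity (\ref{5-12}) rewritten as $w_{q+1}^{(c)}=\sum_k g_{(k)}(\curl\curl(a_{(k)}W_{(k)}^c)-a_{(k)}W_{(k)})$, so that the extra $\lambda^{-2}$ in $W_{(k)}^c$ (Lemma \ref{Lemma spatial building blocks2}) compensates two derivatives; the two spatial derivatives on $a_{(k)}$ explain the $\ell^{-13}$ and the $N+4$ power of $\|\Ru_q\|$. For (\ref{5-21}), the bound $\|h_{(k)}\|_{L_t^\infty}\lesssim 1$ from Lemma \ref{Lemma temporal building blocks}, the $\sigma^{-1}$ prefactor, and the $C_{t,x}^{N+1}$ estimate for $a_{(k)}^2$ together give the claim.

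Next I will turn to the pointwise estimates (\ref{5-22})--(\ref{5-24}). Since $W_{(k)}$ does not depend on $t$, all time derivatives act on $g_{(k)}$ (which obeys $\|\partial_t^n g_{(k)}\|_{L_t^\infty}\lesssim \sigma^n\tau^{n+1/2}$) or on $a_{(k)}$. Thus
\[
\|w_{q+1}^{(p)}\|_{C_{t,x}^N}\lesssim \sum_{k,\,N_1+N_2+N_3\le N}\|a_{(k)}\|_{C_{t,x}^{N_1+N_3}}\|\nabla^{N_2}W_{(k)}\|_{L_x^\infty}\|\partial_t^{N_3}g_{(k)}\|_{L_t^\infty},
\]
and I will plug in the definitions (\ref{5-1}), namely $r_\perp^{-1}=\lambda_{q+1}^{\alpha-1+8\varepsilon}$, $\tau=\lambda_{q+1}^{2\alpha}$, $\sigma=\lambda_{q+1}^{2\varepsilon}$, together with (\ref{2-22}) to absorb the $\ell^{-\cdot}$ factors into $\lambda_{q+1}^{\varepsilon/8}$. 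An analogous computation, using the extra $\lambda^{-2}$ in $W_{(k)}^c$, handles (\ref{5-23}); and (\ref{5-24}) follows from $\|h_{(k)}\|_{C_t^n}\lesssim (\sigma\tau)^n$ combined with the $C_{t,x}^{N+1}$ amplitude estimate.

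The main obstacle is bookkeeping: one has to check that after collecting the explicit powers of $r_\perp$, $\tau$, $\sigma$, $\lambda_{q+1}$ and the $\ell^{-\cdot}$ losses, the resulting exponent is bounded by the target $\lambda_{q+1}^{2\alpha(N+1)+1}$ (respectively $\lambda_{q+1}^{2\alpha(N+1)}$ and $\lambda_{q+1}^{2\alpha N+1}$). Since the $\ell^{-\cdot}$ corrections are dominated by $\lambda_{q+1}^{\varepsilon/8}$ by (\ref{2-22}) and $\varepsilon$ is chosen small in (\ref{2-9}), all three bounds reduce to elementary arithmetic on the exponents defined in (\ref{5-1}); I expect this verification to be routine once the worst-exponent terms (for instance $\ell^{-6N-7}r_\perp^{-1}\lambda^N\sigma^N\tau^{N+1/2}$ in the $C_{t,x}^N$ bound for $w_{q+1}^{(p)}$) are identified and checked against the target.
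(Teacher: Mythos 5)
Your proposal follows essentially the same route as the paper's proof: Leibniz expansion into amplitude, temporal, and Mikado factors, the bounds of Lemmas \ref{Lemma temporal building blocks}, \ref{Lemma amplitudes 1}, \ref{Lemma spatial building blocks2}, the $\lambda^{-2}$ gain from $W_{(k)}^c$ (equivalently, at least one derivative hitting $a_{(k)}$) for the corrector, and absorption of the $\ell^{-\cdot}$ losses via \eqref{2-22} and the smallness of $\varepsilon$ in \eqref{2-9}; your identified worst-case terms match the paper's. Only a cosmetic remark: in \eqref{5-20} the $\ell^{-13}$ actually comes from the one-derivative term $\|a_{(k)}\|_{C_{t,x}^{1}}$ (the two-derivative term costs $\ell^{-19}$ but loses an extra $\lambda_{q+1}^{-1}$, so it is absorbed via $\ell^{-6}<\lambda_{q+1}$), which is exactly the bookkeeping you defer and it does close.
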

\textit{Proof.} First, using Lemmas \ref{Lemma temporal building blocks},  \ref{Lemma amplitudes 1}, \ref{Lemma spatial building blocks2} and (\ref{5-10}) we get
\begin{align}\label{5-25}
	\|\nabla^N w_{q+1}^{(p)}\|_{L_t^\gamma L_x^\rho}
	&\lesssim \sum_{k \in \Lambda}   \sum_{N_{1}+N_{2}=N}\|a_{(k)}\|_{C_{t,x}^{N_{1}}}\|\nabla^{N_{2}} W_{(k)}\|_{L_x^\rho}\|g_{(k)}\|_{L_t^\gamma}\notag\\
	&\lesssim (1+\|\Ru_{q}\|_{C_{t}L_{x}^{1}}^{N+2})\ell^{-7} r_{\perp}^{\frac{2}{\rho}-1}\laq^{N}\tau^{\frac{1}{2}-\frac{1}{\gamma}}, 
\end{align}
where the last step was due to $\ell^{-6}<\lambda_{q+1}$. This verifies (\ref{5-19}).

Similarly,  we obtain
\begin{align}\label{5-26}
	\|\nabla^N w_{q+1}^{(c)}\|_{L_t^\gamma L_x^\rho} 
	&\lesssim
	\sum_{k \in \Lambda}   \big(\sum_{N_{1}+N_{2}=N}\| a_{(k)}\|_{C_{t,x}^{N_{1}+1}}
	\| \nabla^{N_{2}}W_{(k)}^{c}\|_{W_{x}^{1,\rho}}
	+\| a_{(k)}\|_{C_{t,x}^{N_{1}+2}}
	\|\nabla^{N_{2}}W_{(k)}^{c}\|_{L_{x}^{\rho}}\big)\|g_{(k)}\|_{L_t^\gamma}\notag\\
	&\lesssim
	(1+\|\Ru_{q}\|_{C_{t}L_{x}^{1}}^{N+4})\ell^{-13} r_{\perp}^{\frac{2}{\rho}-1}\laq^{N-1}\tau^{\frac{1}{2}-\frac{1}{\gamma}},
\end{align}
 which verifies (\ref{5-20}).

For the temporal correctors, using Lemmas \ref{Lemma temporal building blocks}, \ref{Lemma amplitudes 1}, \ref{Lemma spatial building blocks2} and (\ref{5-14}) we have
\begin{align}\label{5-27}
	\|\nabla^N w_{q+1}^{(o)}\|_{L_t^\gamma L_x^\rho}
	&\lesssim
	\sigma^{-1} \sum_{k \in \Lambda} \|h_{(k)}\|_{C_{t}}\|a_{(k)}^2\|_{C_{t,x}^{N+1}}
	\lesssim
	(1+\|\Ru_{q}\|_{C_{t}L_{x}^{1}}^{N+5})\ell^{-6N-20}\sigma^{-1}.
\end{align}
Thus, we obtain (\ref{5-21}).

Next, we continue to verify the $ C_{t, x}^{N}$-estimates. Applying Lemmas \ref{Lemma temporal building blocks}, \ref{Lemma amplitudes 1}, \ref{Lemma spatial building blocks2}, \eqref{2-22}, 
 (\ref{5-1}) and (\ref{5-10}) we get
\begin{align}\label{5-28}
	\| w_{q+1}^{(p)}\|_{C_{t, x}^{N}}
	&\lesssim \sum_{k \in \Lambda}\|a_{(k)}\|_{C_{t, x}^{N}}	\sum_{N_{1}+N_{2}\leq N}\|\p_{t}^{N_{1}}g_{(k)}\|_{L_t^{\infty}}  \|\nabla^{N_{2}}W_{(k)}\|_{L_{x}^{\infty}}\notag\\
	&\lesssim (1+\|\Ru_{q}\|_{C_{t}L_{x}^{1}}^{N+2})\ell^{-6N-7}\sigma^{N}\tau^{N+\frac{1}{2}} r_{\perp}^{-1}\notag\\
	&\lesssim
	(1+\|\Ru_{q}\|_{C_{t}L_{x}^{1}}^{N+2})\lambda^{2\a(N+1)+1}.
\end{align}

 By Lemmas  \ref{Lemma temporal building blocks}, \ref{Lemma amplitudes 1},  \ref{Lemma spatial building blocks2},  \eqref{2-22}, (\ref{5-1}) and (\ref{5-11}), we obtain
\begin{align}\label{5-29}
	\| w_{q+1}^{(c)}\|_{C_{t, x}^{N}}
	&\lesssim
	\sum_{k \in \Lambda}
	\sum_{N_{1}+N_{2}\leq N}\|\p_{t}^{N_{1}}g_{(k)}\|_{L_t^{\infty}}(
	\| a_{(k)}\|_{C_{t, x}^{N+1}}\|\nabla^{N_{2}+1} W_{(k)}^{c}\|_{L_{x}^{\infty}}
	+\| a_{(k)}\|_{C_{t, x}^{N+2}}\|\nabla^{N_{2}}W_{(k)}^{c}\|_{L_{x}^{\infty}})\notag\\
	&\lesssim (1+\|\Ru_{q}\|_{C_{t}L_{x}^{1}}^{N+4})\ell^{-6N-13} \sigma^{N}\tau^{N+\frac{1}{2}}r_{\perp}^{-1}\lambda_{q+1}^{-1}	\notag \\
	&\lesssim
	(1+\|\Ru_{q}\|_{C_{t}L_{x}^{1}}^{N+4})\lambda_{q+1}^{2\a(N+1)}.
\end{align}

Regarding the temporal corrector, one has similar estimate as in (\ref{3-71}):
\begin{align}\label{5-30}
\|w_{q+1}^{(o)}\|_{C_{t, x}^{N}}
&\lesssim		
\sigma^{-1} \sum_{k \in \Lambda} \sum_{N_{1}+N_{2}\leq N} \|h_{(k)}\|_{C_{t}^{N_{1}}}\|\nabla (a_{(k)}^2)\|_{C_{t,x}^{N_{2}+1}},
\end{align}
which, via Lemmas  \ref{Lemma temporal building blocks}, \ref{Lemma amplitudes 1} and (\ref{5-1}), yields that
\begin{align}\label{5-31}
	\|w_{q+1}^{(o)}\|_{C_{t, x}^{N}}
	&\lesssim
	(1+\|\Ru_{q}\|_{C_{t}L_{x}^{1}}^{N+6})\lambda_{q+1}^{2\a N+1}.
\end{align}
Therefore, the proof of Lemma \ref{Lemma Estimates of perturbations 2} is complete.\hfill$\square$

\subsection{Verification of inductive estimates for velocity fields} 
We shall verify the inductive estimates  (\ref {2-14}) and (\ref {2-17})-(\ref {2-19}) at level $q+1$ for the velocity fields.

By (\ref{5-16}), (\ref{5-22})-(\ref{5-24}), for $0\leq N\leq 4$, we obtain
\begin{align}\label{5-32}
	\|w_{q+1}\|_{C_{t,x}^{N}}	
	&\lesssim
	\|\Theta_{q+1}\|_{C_{t}^{N}}\|w_{q+1}^{(p)+(c)}\|_{C_{t,x}^{N}}	+	\|\Theta_{q+1}\|^{2}_{C_{t}^{N}}\|w_{q+1}^{(o)}\|_{C_{t,x}^{N}}	\notag\\
	&\lesssim
	(1+\|\Ru_{q}\|_{C_{t}L_{x}^{1}}^{N+6})\lambda_{q+1}^{2\a(N+1)+2}.
\end{align}
Thus,  by (\ref{2-14}) and (\ref{5-32}), we have
\begin{align}\label{5-33}
	\|v_{q+1}\|_{\Lo^{m}_{\om}C_{t,x}^{N}}
	&\lesssim
	\|v_{\ell}\|_{\Lo^{m}_{\om}C_{t,x}^{N}}+\|w_{q+1}\|_{\Lo^{m}_{\om}C_{t,x}^{N}}
	\leq
	\(8(N+10)mL^{2}50^{q}\)^{\(N+10\)50^{q}}\laq^{2\a(N+1)+5},
\end{align}
where the last step was due to \eqref{2-7}. 
This verifies  (\ref{2-14}) at level $q+1$.

Regarding the  decay estimate (\ref{2-17}), we  apply the $L^{p}-$decorrelation in Lemma \ref{Lemma Decorrelation1} to derive
\begin{align}\label{5-34}
	\|w_{q+1}^{(p)}\|_{L_{t}^{2}L_{x}^{2}}
	&\lesssim
	\sum_{k \in \Lambda }
	(\|a_{(k)}\|_{L_{t}^2L_{x}^2}\|g_{(k)}\|_{L_t^2}\|\phi_{(k)}\|_{L_x^2}+\sigma^{-\frac{1}{2}}\|a_{(k)}\|_{C_{t, x}^1}\|g_{(k)}\|_{L_t^2}\|\phi_{(k)}\|_{L_x^2}).
\end{align}
By  Lemmas \ref{Lemma amplitudes 1} and  \ref{Lemma spatial building blocks2}, one gets 
\begin{align}\label{5-35}
	\|w_{q+1}^{(p)}\|_{L_{t}^{2}L_{x}^{2}}
	\lesssim
	\dqq^{\frac{1}{2}}+\|\Ru_{q}\|_{C_{t}L_{x}^{1}}^{\frac{1}{2}}+\sigma^{-\frac{1}{2}}(1+\|\Ru_{q}\|_{C_{t}L_{x}^{1}}^{3})\ell^{-13}.
\end{align}
Taking  into account  (\ref{5-1}) and (\ref{5-19})-(\ref{5-21}) we have
\begin{align}\label{5-36}
	\|w_{q+1}\|_{L_{t}^{2}L_{x}^{2}}
		&\lesssim
	\|\Theta_{q+1}\|_{C_{t}}\|w_{q+1}^{(p)+(c)}\|_{L_{t}^{2}L_{x}^{2}}+	\|\Theta_{q+1}\|^{2}_{C_{t}}\|w_{q+1}^{(o)}\|_{L_{t}^{2}L_{x}^{2}}\notag\\
	&\lesssim
	\dqq^{\frac{1}{2}}+\|\Ru_{q}\|_{C_{t}L_{x}^{1}}^{\frac{1}{2}}++(1+\|\Ru_{q}\|_{C_{t}L_{x}^{1}}^{5})\ell^{-14}\sigma^{-1}.
\end{align}
Then, by   (\ref{2-7}), (\ref{2-15}), (\ref{2-16}), (\ref{2-22}) and  (\ref{3-49}), we obtain
\begin{align}\label{5-37}
	\|w_{q+1}\|_{\Lo^{2r}_{\om}L_{t}^{2}L_{x}^{2}}
	&\lesssim
	\dqq^{\frac{1}{2}}+\|\Ru_{q}\|_{\Lo^{r}_{\om}C_{t}L_{x}^{1}}^{\frac{1}{2}}+(1+\|\Ru_{q}\|_{\Lo^{10r}_{\om}C_{t}L_{x}^{1}}^{5})\ell^{-14}\sigma^{-1}
	\lesssim
	\dqq^{\frac{1}{2}}.
\end{align}
Thus, by (\ref{2-14}), we obtain
\begin{align}\label{5-38}
	\|v_{q+1}-v_{q}\|_{\Lo^{2r}_{\om}L_{t}^{2}L_{x}^{2}}
	&\lesssim
	\|v_\ell-v_{q}\|_{\Lo^{2r}_{\om}L_{t}^{2}L_{x}^{2}}+	\|w_{q+1}\|_{\Lo^{2r}_{\om}L_{t}^{2}L_{x}^{2}}
 \lesssim \ell\|v_{q}\|_{\Lo^{2r}_{\om}C_{t,x}^{1}}+\delta_{q+1}^{\frac{1}{2}}
	\lesssim\delta_{q+1}^{\frac{1}{2}},
\end{align}
which yields (\ref{2-17}) at level $q+1$.

Concerning the $\Lo^{r}_{\om}L_{t}^{1}L_{x}^{2}$-estimate (\ref{2-18}) at $q+1$,  by  (\ref{3-49}) and (\ref{5-19})-(\ref{5-21}) in Lemma \ref{Lemma Estimates of perturbations 2},  we get
\begin{align}\label{5-39}
	\|w_{q+1}\|_{L_{t}^{1}L_{x}^{2}}
 	&\lesssim
	\|\Theta_{q+1}\|_{C_{t}}\|w_{q+1}^{(p)+(c)}\|_{L_{t}^{1}L_{x}^{2}}+	\|\Theta_{q+1}\|^{2}_{C_{t}}\|w_{q+1}^{(o)}\|_{L_{t}^{1}L_{x}^{2}}\notag\\
	&\lesssim
	(1+\|\Ru_{q}\|_{C_{t}L_{x}^{1}}^{5})\ell^{-20}\sigma^{-1},
\end{align}
which along with (\ref{2-7}), (\ref{2-15}), (\ref{2-16})  and (\ref{2-22})  implies
\begin{align}\label{5-40}
	\|v_{q+1}-v_q\|_{\Lo^{r}_{\om}L_{t}^{1}L_{x}^{2}}
	& \lesssim
	\|v_\ell-v_q\|_{\Lo^{r}_{\om}L_{t}^{1}L_{x}^{2}}+\|w_{q+1}\|_{\Lo^{r}_{\om}L_{t}^{1}L_{x}^{2}}\notag \\
	& \lesssim
	\ell\|v_q\|_{\Lo^{r}_{\om}C_{t,x}^{1}}+	(1+\|\Ru_{q}\|_{\Lo^{5r}_{\om}C_{t}L_{x}^{1}}^{5}\ell^{-20}\sigma^{-1} 
	\leq
	\dqqq^{\frac{1}{2}}.
\end{align}
This verifies  estimate (\ref{2-18}) at level $q+1$.

At last, for estimate (\ref{2-19}), by Lemma \ref{Lemma Estimates of perturbations 2} and the  interpolation, we get
\begin{align}\label{5-41}
	\|w_{q+1}\|_{L_{t}^{\ga}W_{x}^{s,p}}
		&\lesssim
	\|\Theta_{q+1}\|_{C_{t}}\|w_{q+1}^{(p)+(c)}\|_{L_{t}^{\ga}W_{x}^{s,p}}+	\|\Theta_{q+1}\|^{2}_{C_{t}}\|w_{q+1}^{(o)}\|_{L_{t}^{\ga}W_{x}^{s,p}}\notag\\
	&\lesssim
	(1+\|\Ru_{q}\|_{C_{t}L_{x}^{1}}^{s+6})\ell^{-6s-20}\sigma^{-1},
\end{align}
where the last step was due to  (\ref{2-9})  and the fact that
\begin{align}\label{5-42}
	\ell^{-7}r_{\perp}^{\frac{2}{p}-1}\laq^{s}\tau^{\frac{1}{2}-\frac{1}{\ga}}
	&=\lambda_{q}^{7\times80}\lambda_{q+1}^{s-\frac{2\a}{\gamma}+(8-\frac{16}{p})\varepsilon+2\a-1}
	\leq \lambda_{q+1}^{-10\varepsilon}\leq \ell^{-6s-20}\sigma^{-1}.
\end{align}
Then, using the embedding $C_{x}^{3}(\T^{3}) \hookrightarrow W_{x}^{s,p}(\T^{3})$ for  $(s, \gamma, p) \in \mathcal{S}_{2}$, (\ref{2-7}), (\ref{2-15}), (\ref{2-16})  and (\ref{2-22}), we get
\begin{align}\label{5-43}
	\|v_{q+1}-v_q\|_{\Lo^{r}_{\om}L_{t}^{\ga}W_{x}^{s,p}}
	& \lesssim
	\|v_\ell-v_q\|_{\Lo^{r}_{\om}L_{t}^{\ga}W_{x}^{s,p}}+\|w_{q+1}\|_{\Lo^{r}_{\om}L_{t}^{\ga}W_{x}^{s,p}} \notag\\
	& \lesssim
	\|v_\ell-v_q\|_{\Lo^{r}_{\om}L_{t}^{\ga}C_{x}^{3}}+ (1+	\|\Ru_{q}\|_{\Lo^{(s+6)r}_{\om}C_{t}L_{x}^{1}}^{s+6})\ell^{-6s-20}\sigma^{-1} \notag\\
	&\lesssim
	\ell\|v_q\|_{\Lo^{r}_{\om}C_{t,x}^{4}}+	 (1+	\|\Ru_{q}\|_{\Lo^{9r}_{\om}C_{t}L_{x}^{1}}^{9})\ell^{-6s-20}\sigma^{-1}\notag\\
	&\lesssim
	\dqqq^{\frac{1}{2}}.
\end{align}
Therefore,  estimate (\ref{2-19}) is verified at level $q+1$.

\section{Reynolds stresses in the regime \texorpdfstring{$\mathcal{S}_{2}$}{S2}}\label{Sec-Reynolds 2}

We proceed to treat the Reynolds stress 
 at level $q+1$ in the supercritical regime $\mathcal{S}_{2}$.

\subsection{Reynolds stress} 
We derive  from system (\ref{2-12}) at level $q+1$ that
\begin{align}\label{6-1}
	\div\Ru_{q+1}-\nabla P_{q+1}
	&=\underbrace{\p_{t}\omw^{(p)+(c)}_{q+1}+\nu(-\Delta)^{\a}w_{q+1}-(z_{q+1}-z_{\ell})+\div\big((v_{\ell}+z_{\ell})\otimes w_{q+1}+w_{q+1}\otimes ( v_{\ell}+z_{\ell})\big)}_{\div\Ru_{lin}+\nabla P_{lin}}\notag\\  
	&\quad+
	\underbrace{\div(\omw^{(p)}_{q+1}\otimes                       \omw^{(c)+(o)}_{q+1}
	+\omw^{(c)+(o)}_{q+1}\otimes w_{q+1})}_{\div\Ru_{corr}+\nabla P_{corr}}+\div(\Ru_{com1})\notag\\ 
	&\quad
	+\underbrace{\div( v_{q+1}\otimes z_{q+1}+z_{q+1}\otimes v_{q+1}
	-v_{q+1}\otimes z_{\ell}-z_{\ell}\otimes v_{q+1}+z_{q+1}\otimes z_{q+1}
	-z_{\ell}\otimes z_{\ell})}_{\div\Ru_{com2}+\nabla P_{com2}}\notag\\
	&\quad+\underbrace{\div(\omw^{(p)}_{q+1}\otimes\omw^{(p)}_{q+1}+\Ru_{\ell})+\p_{t}\omw^{(o)}_{q+1}}_{\div\Ru_{osc}+\nabla P_{osc}}-\nabla P_{\ell},
\end{align}
where $\Ru_{com1}$ is given by \eqref{2-27}.
Using the inverse divergence operator  $\mathcal{R}$ we define
\begin{align}
	&\Ru_{lin}
	:=\mathcal{R}\p_{t}\omw_{q+1}^{(p)+(c)}+\mathcal{R}\(\nu(-\Delta)^{\a}w_{q+1}\)+\mathcal{R}(z_{\ell}-z_{q+1})+(v_{\ell}+z_{\ell})\mathring{\otimes}w_{q+1}+w_{q+1}\mathring{\otimes}( v_{\ell}+z_{\ell}),\label{6-2}\\
	&\Ru_{corr}:=
	\omw_{q+1}^{(p)}\mathring{\otimes} \omw_{q+1}^{(c)+(o)}	+\omw_{q+1}^{(c)+(o)}\mathring{\otimes}w_{q+1},\label{6-3}\\
	&\Ru_{com2}
	:= v_{q+1}\mathring{\otimes}z_{q+1}+z_{q+1}\mathring{\otimes} v_{q+1}-v_{q+1}\mathring{\otimes}z_{\ell}
	-z_{\ell}\mathring{\otimes} v_{q+1}
	+z_{q+1}\mathring{\otimes} z_{q+1} -z_{\ell}\mathring{\otimes}z_{\ell}.\label{6-4}
\end{align}
While for the remaining oscillation error, by (\ref{3-43-1}), 
\begin{align}\label{6-5}
	\div(\omw^{(p)}_{q+1}\otimes\omw^{(p)}_{q+1}+\Ru_{\ell})
	&=(1-\Theta_{q+1}^{2})\div \Ru_{\ell}+\Theta_{q+1}^{2}\div\big(\sum_{k\in\Lambda} a_{(k)}^{2}(g_{(k)}^{2}-1) \fint_{\T^3}W_{(k)}\otimes W_{(k)} \mathrm{d}x\big)\notag\\
	&\quad+\Theta_{q+1}^{2}\div(\varrho \mathrm{Id})+\Theta_{q+1}^{2}\div\big(\sum_{k\in\Lambda}a_{(k)}^{2}g_{(k)}^{2}\mathbb{P}_{\neq 0}(W_{(k)}\otimes W_{(k)})\big).
\end{align}
By  (\ref{3-48-2}), \eqref{5-14} and (\ref{6-5}), we get
\begin{align}\label{6-6}
	&\quad\div(\omw^{(p)}_{q+1}\otimes\omw^{(p)}_{q+1}+\Ru_{\ell})+\p_{t}\omw^{(o)}_{q+1}\notag\\
	&=(1-\Theta_{q+1}^{2})\div \Ru_{\ell}+\p_{t}\Theta_{q+1}^{2}w^{(o)}_{q+1}+\Theta_{q+1}^{2}\div(\varrho \mathrm{Id})+\Theta_{q+1}^{2}\sum_{k \in \Lambda}\mathbb{P}_{\neq 0}\big(g_{(k)}^{2}\mathbb{P}_{\neq 0}(W_{(k)}\otimes W_{(k)})\nabla (a_{(k)}^{2})\big)	\notag\\
	&\quad
	-\Theta_{q+1}^{2}\sigma^{-1} \sum_{k \in \Lambda} \mathbb{P}_{\neq 0}\big(h_{(k)}\fint_{\T^3}W_{(k)}\otimes W_{(k)}\mathrm{d}x\p_{t}\nabla (a_{(k)}^{2})\big)\notag\\
	&\quad+(\nabla\Delta^{-1}\div)\sigma^{-1}\mathbb{P}_{\neq 0}\p_{t}\big(h_{(k)} \fint_{\mathbb{T}^3} W_{(k)} \otimes W_{(k)} \mathrm{d} x \nabla(a_{(k)}^2)\big).
\end{align}

Therefore, we can choose the oscillation error by
\begin{align} \label{6-7}
	\Ru_{osc}:=\Ru_{osc.1}+\Ru_{osc.2}+\Ru_{osc.3},  
\end{align}
where
\begin{align}
	&\Ru_{osc.1}
	:=(1-\Theta_{q+1}^{2}) \Ru_{\ell}+\mathcal{R}(\p_{t}\Theta_{q+1}^{2}w^{(o)}_{q+1}),\label{6-8}\\
	&\Ru_{osc.2}:=\Theta_{q+1}^{2}\sum_{k \in \Lambda}\mathcal{R}\mathbb{P}_{\neq 0}\big(g_{(k)}^{2}\mathbb{P}_{\neq 0}(W_{(k)}\otimes W_{(k)})\nabla (a_{(k)}^{2})\big),\label{6-9}\\
	&\Ru_{osc.3}
	:=-\Theta_{q+1}^{2}\sigma^{-1} \sum_{k \in \Lambda} \mathcal{R}\mathbb{P}_{\neq 0}\big(h_{(k)}\fint_{\T^3}W_{(k)}\otimes W_{(k)}\mathrm{d}x\p_{t}\nabla (a_{(k)}^{2})\big).    \label{6-10}
\end{align}

Finally, we obtain the decomposition of the Reynolds stress at level $ q + 1 $:
$$\Ru_{q+1}:=\Ru_{lin}+\Ru_{corr}+\Ru_{com1}+\Ru_{com2}+\Ru_{osc}.$$
\subsection{Verification of growth estimate}
We aim to verify the inductive growth estimate in \eqref{2-15}.
To this end, let us estimate each term in the above choice of the Reynolds stresses  $ \Ru_{q+1}$.

\medskip 
\paragraph{\bf Linear error.}
By Lemma \ref{Lemma Estimates of perturbations 2}, (\ref{3-49})  and  (\ref{3-50}),
\begin{align}\label{6-12}
	\|\mathcal{R}\p_{t}\omw_{q+1}^{(p)+(c)}\|_{C_{t}L_x^{1}}
	&\lesssim
	\| \Theta_{q+1}\|_{C_{t}^{1}} \|w_{q+1}^{(p)+(c)} \|_{C_{t,x}^{1}}
	\lesssim(1+\|\Ru_{q}\|_{C_{t}L_{x}^{1}}^{5})\lambda_{q+1}^{4\a+1}\varsigma_{q}^{-1}.
\end{align}
By  (\ref{3-49}), (\ref{5-16}), Lemma \ref{Lemma Estimates of perturbations 2}
and the Sobolev embedding $H^{3}_{x}(\T^{3})\hookrightarrow H^{2\a-1}_{x}(\T^{3})$
with $\a\in[1,2)$, we obtain
\begin{align}\label{6-13}
	\|\mathcal{R}(\nu(-\Delta)^{\a}w_{q+1})\|_{C_{t}L_x^{1}}
	&\lesssim
	\|w_{q+1}\|_{C_{t}H_x^{3}}
	\lesssim	(1+\|\Ru_{q}\|_{C_{t}L_{x}^{1}}^{8})\ell^{-7}\laq^{3}\tau^{\frac{1}{2}}.
\end{align}
Moreover, for the remaining linear errors in  \eqref{6-2}, by Lemma \ref{Lemma Estimates of perturbations 2} and H\"{o}lder's inequality,
\begin{align}\label{6-14}
	&\quad\|\mathcal{R}(z_{\ell}-z_{q+1})\|_{C_{t}L_x^{1}}
	+\|(v_{\ell}+z_{\ell})\mathring{\otimes}w_{q+1}+w_{q+1}\mathring{\otimes}( v_{\ell}+z_{\ell})\|_{C_{t}L_x^{1}}\notag\\
	&\lesssim\|z\|_{C_{t}L_x^{2}}+(\|v_{q}\|_{C_{t,x}}+\|z\|_{C_{t} L_{x}^{2}})\|w_{q+1}\|_{C_{t,x}}\notag\\
	&\lesssim
	(\|v_{q}\|_{C_{t,x} }+\|z\|_{C_{t} L_{x}^{2}})(1+\|\Ru_{q}\|_{C_{t}L_{x}^{1}}^{6})\lambda_{q+1}^{2\a+1}.
\end{align}
Thus, we conclude from the above estimates (\ref{6-12})-(\ref{6-14}) that
\begin{align}\label{6-15}
	\|\Ru_{lin}\|_{C_{t}L_x^{1}}
	&\lesssim
	(1+\|\Ru_{q}\|_{C_{t}L_{x}^{1}}^{8})\lambda_{q+1}^{4\a+2}+(\|v_{q}\|_{C_{t,x} }+\|z\|_{C_{t} L_{x}^{2}})(1+\|\Ru_{q}\|_{C_{t}L_{x}^{1}}^{6})\lambda_{q+1}^{2\a+1}.  
\end{align}

\medskip 
\paragraph{\bf Oscillation error.} 
By (\ref{3-49}),  (\ref{6-8}) and Lemma \ref{Lemma Estimates of perturbations 2}, we  get
\begin{align}\label{6-16}
	\| \Ru_{osc.1}\|_{C_{t}L_{x}^{1}}
	&\lesssim
	\|1-\Theta_{q+1}^{2}\|_{C_{t}}\|\Ru_{q}\|_{C_{t}L_{x}^{1}}+\|\Theta_{q+1}^{2}\|_{C_{t}^{1}}\|w^{(o)}_{q+1}\|_{C_{t,x}}
	\notag\\
	&\lesssim
	(1+\|\Ru_{q}\|_{C_{t}L_{x}^{1}}^{6})\lambda_{q+1}\varsigma_{q}^{-1}.
\end{align}

By (\ref{3-49}), (\ref{6-9}) and Lemmas \ref{Lemma temporal building blocks}, \ref{Lemma amplitudes 1} 
and \ref{Lemma spatial building blocks2}, 
\begin{align}\label{6-17}
	\| \Ru_{osc.2}\|_{C_{t}L_{x}^{1}}
	&\lesssim
	\|\Theta_{q+1}^{2}\|_{C_{t}}
	\|\mathcal{R}\mathbb{P}_{\neq 0}(g_{(k)}^{2}\mathbb{P}_{\neq 0}(W_{(k)}\otimes W_{(k)})\nabla (a_{(k)}^{2}))\|_{C_{t}L_{x}^{2}}\notag\\
	&\lesssim
	\sum_{k\in\Lambda}\|a_{(k)}^{2}\|_{C_{t,x}^{1}}\|W_{(k)}\otimes W_{(k)}\|_{C_{t}L_{x}^{2}}
	\|g_{(k)}^{2}\|_{L_{t}^{\infty}}\notag\\
	&\lesssim
	(1+\|\Ru_{q}\|_{C_{t}L_{x}^{1}}^{5})\ell^{-20}r_{\perp}^{-1}\tau.
\end{align}

Regarding the last component, by (\ref{3-49}), (\ref{3-21}), (\ref{6-10})  and Lemma \ref{Lemma amplitudes 1},
\begin{align}  \label{6-19}
\|\Ru_{osc.3}\|_{C_{t}L_{x}^{1}}
	\lesssim
\sigma^{-1}\sum_{k\in\Lambda}\|h_{(k)}\|_{C_{t}}
\|a_{(k)}^{2}\|_{C_{t,x}^{2}}   
	 \lesssim
(1+\|\Ru_{q}\|_{C_{t}L_{x}^{1}}^{6})\ell^{-26}\sigma^{-1}.
\end{align}

Thus, combining {\eqref{6-16}-\eqref{6-19}} altogether,  we conclude that
\begin{align}  \label{6-20}
	\|\Ru_{osc}\|_{C_{t}L_x^{1}}
	&\lesssim
	(1+\|\Ru_{q}\|_{C_{t}L_{x}^{1}}^{6})(\lambda_{q+1}\varsigma_{q}^{-1}+\ell^{-20}\lambda_{q+1}^{2\a-1+8\varepsilon}+\ell^{-26}\lambda_{q+1}^{-2\varepsilon})\notag\\
	&\lesssim
	(1+\|\Ru_{q}\|_{C_{t}L_{x}^{1}}^{6})\lambda_{q+1}^{2\a},
\end{align}
where   the last  step was due to (\ref{2-6}), (\ref{2-22}) and (\ref{5-1}).
\medskip 
\paragraph{\bf Corrector error.}
By (\ref{3-49}), Lemma \ref{Lemma Estimates of perturbations 2}  and (\ref{6-3}),  we have
\begin{align}\label{6-21}
	\|\Ru_{corr}\|_{C_{t}L_{x}^{1}}
	&\lesssim
	\|\omw_{q+1}^{(c)+(o)} \|_{C_{t,x}}(\|\omw_{q+1}^{(p)}\|_{C_{t,x}}+\|w_{q+1}\|_{C_{t,x}})
 \notag\\
	&\lesssim	(1+\|\Ru_{q}\|_{C_{t}L_{x}^{1}}^{12})\lambda_{q+1}^{4\a+1}.
\end{align}

\medskip 
\paragraph{\bf Commutator errors.} At last, for the commutator terms, by
(\ref{2-27}) and (\ref{6-4}), we have
\begin{align}\label{6-22}
	\|\Ru_{com1}\|_{C_{t}L_x^{1}}
	&\lesssim \|v_{q}\|_{C_{t,x}}^{2}+\|z\|_{C_{t}L_{x}^{2}}^{2},
\end{align}
and
\begin{align}\label{6-23}
	\|\Ru_{com2}\|_{C_{t}L_x^{1}}
	&\lesssim
	\|v_{q+1}\|_{C_{t,x}}^{2}+\|z\|_{C_{t} L_{x}^{2}}^{2}.
\end{align}

\medskip 
\paragraph{\bf Verification of inductive estimate  (\ref{2-15}).} 
Now, summing up all the above estimates (\ref{6-15}),  and  (\ref{6-20})-(\ref{6-23}) we conclude that
\begin{align}\label{6-24}
	\|\Ru_{q+1}\|_{C_{t}L_x^{1}}
	&\lesssim
	(1+\|\Ru_{q}\|_{C_{t}L_{x}^{1}}^{8})\lambda_{q+1}^{4\a+2}+(\|v_{q}\|_{C_{t,x} }+\|z\|_{C_{t} L_{x}^{2}})(1+\|\Ru_{q}\|_{C_{t}L_{x}^{1}}^{6})\lambda_{q+1}^{2\a+1}+(1+\|\Ru_{q}\|_{C_{t}L_{x}^{1}}^{6})\lambda_{q+1}^{2\a}\notag\\           
	&\quad+(1+\|\Ru_{q}\|_{C_{t}L_{x}^{1}}^{12})\lambda_{q+1}^{4\a+1}
	+\|v_{q}\|_{C_{t,x}}^{2}+\|z\|_{C_{t}L_{x}^{2}}^{2}  
	+\|v_{q+1}\|_{C_{t,x}}^{2} \notag\\
	&\lesssim
	(\|v_{q}\|_{C_{t,x} }+\|z\|_{C_{t} L_{x}^{2}})(1+\|\Ru_{q}\|_{C_{t}L_{x}^{1}}^{6})\lambda_{q+1}^{2\a+1}+(1+\|\Ru_{q}\|_{C_{t}L_{x}^{1}}^{12})\lambda_{q+1}^{4\a+2}\notag\\           
	&\quad
	+\|v_{q}\|_{C_{t,x}}^{2}+\|z\|_{C_{t}L_{x}^{2}}^{2}  
	+\|v_{q+1}\|_{C_{t,x}}^{2}.
\end{align}
Taking the $m$-th moment in (\ref{6-24}) and using (\ref{stochastic evolution}),  (\ref{2-14}), (\ref{2-15}), (\ref{5-33}) and H\"{o}lder's inequality, we obtain
\begin{align*}
	\|\Ru_{q+1}\|_{\Lo^{m}_{\om}C_{t}L_x^{1}}
	&\lesssim
	(\|v_{q}\|_{\Lo^{2m}_{\om}C_{t,x}}+\|z\|_{\Lo^{2m}_{\om}C_{t}L_{x}^{2}})(1+\|\Ru_{q}\|_{\Lo^{12m}_{\om}C_{t}L_{x}^{1}}^{6})\lambda_{q+1}^{2\a+1}+	(1+\|\Ru_{q}\|_{\Lo^{12m}_{\om}C_{t}L_{x}^{1}}^{12})\lambda_{q+1}^{4\a+2}\notag\\
	&\quad+\|v_{q}\|_{\Lo^{2m}_{\om}C_{t,x}}^{2}
	+\|z\|_{\Lo^{2m}(\om;C_{t}L_{x}^{2})}^{2}  
	+\|v_{q+1}\|_{\Lo^{2m}(\om;C_{t,x})}^{2} 
    \notag\\
	&\leq
	\lambda_{q+1}^{(4\a+12)}(8mL^{2}50^{q+1})^{50^{q+1}},
\end{align*}
which verifies the inductive estimate (\ref{2-15}) at level $q+1$.

\subsection{Verification of decay estimate \eqref{2-16}: away from the initial time}
Below we  prove the decay estimate \eqref{2-14} at level $q+1$ in the difficult regime $ (\frac{\varsigma_{q}}{2}, T]$ away from the initial.

Let
\begin{equation}\label{6-26}
	\begin{aligned}
		\rho:=\frac{2\a-1+16\varepsilon}{2\a-1+14\varepsilon} \in(1,2)
\end{aligned}\end{equation}
with $\varepsilon$ as in \eqref{2-9}. Then, one has
\begin{equation}\label{6-27}
	\begin{aligned}
		(1-\a-8\varepsilon)(\frac{1}{\rho}-1)= 1-\a-6\varepsilon,
\end{aligned}\end{equation}
and
\begin{equation}\label{6-28}
	\begin{aligned}
		r_{\perp}^{\frac{2}{\rho}-1}=\lambda_{q+1}^{1-\a-6\varepsilon}.
\end{aligned}\end{equation}

We shall treat each component of the Reynolds stress in Subsection \ref{subsec 6.3.1}-\ref{subsec 6.3.4} 
below.

\subsubsection{Linear error:}\label{subsec 6.3.1}  

\medskip 
\paragraph{\bf Control of $\p_{t}\omw_{q+1}^{(p)+(c)}$.}
Let us first consider the linear error and estimate
\begin{align*}
	\|\mathcal{R}\p_{t}\omw_{q+1}^{(p)+(c)}\|_{L_{(\frac{\varsigma_{q}}{2}, T]}^{1}L_x^{\rho}} 
	&\leq
	\|\mathcal{R}(\p_t \Theta_{q+1}) w_{q+1}^{(p)+(c)}\|_{L_{t}^{1}L_x^{\rho}}
	+\|\mathcal{R} \Theta_{q+1} \partial_t w_{q+1}^{(p)+(c)} \|_{L_{t}^{1}L_x^{\rho}} \notag\\
	&=:K_{1}+K_{2}.
\end{align*}
By ({\ref{3-49}}), (\ref{5-19}) and (\ref{5-20}),
\begin{align*}
	 K_{1}&\lesssim
	\|\Theta_{q+1}\|_{C_{t}^{1}} \|w_{q+1}^{(p)+(c)}\|_{L_{t}^{1}L_x^{\rho}}
	\lesssim(1+\|\Ru_{q}\|_{C_{t}L_{x}^{1}}^{4})\ell^{-7}\varsigma_{q}^{-1}r_{\perp}^{\frac{2}{\rho}-1}\tau^{-\frac{1}{2}}.
\end{align*}

Moreover, using Lemmas \textcolor{orange}{ \ref{Lemma temporal building blocks}, \ref{Lemma amplitudes 1}, \ref{Lemma spatial building blocks2}}, (\ref{5-12}) and ({\ref{3-49}}) we obtain
\begin{align*}
	\quad K_{2}
	&\lesssim
	\sum_{k \in\Lambda}
	\|\mathcal{R}\curl\curl\p_{t}(a_{(k)}g_{(k)}W_{(k)}^{c})\|_{L_{t}^{1}L_x^{\rho}}\notag\\
	&\lesssim
	\sum_{k \in \Lambda}
	(\|g_{(k)}\|_{L_{t}^{1}}\|a_{(k)}\|_{C_{t,x}^{2}}\|W_{(k)}^{c}\|_{W_{x}^{1,\rho}}
	+\|\p_{t}g_{(k)}\|_{L_{t}^{1}}\|a_{(k)}\|_{C_{t,x}^{1}}\|W_{(k)}^{c}\|_{W_{x}^{1,\rho}})\notag\\
	&\lesssim	(1+\|\Ru_{q}\|_{C_{t}L_{x}^{1}}^{4})\ell^{-13}r_{\perp}^{\frac{2}{\rho}-1}\lambda_{q+1}^{-1}\sigma\tau^{\frac{1}{2}},
\end{align*}
where the last is due to $\sigma\tau>\ell^{-6}$.

Thus, we get
\begin{align}\label{6-29}
	\|\mathcal{R}\p_{t}\omw_{q+1}^{(p)+(c)}\|_{L_{(\frac{\varsigma_{q}}{2}, T]}^{1}L_x^{\rho}}
	&\lesssim
	(1+\|\Ru_{q}\|_{C_{t}L_{x}^{1}}^{4})\ell^{-13}r_{\perp}^{\frac{2}{\rho}-1}\lambda_{q+1}^{-1}\sigma\tau^{\frac{1}{2}}.
\end{align}

\medskip 
\paragraph{\bf  Control of hyper-viscosity. }
For the hyper-viscosity term,
using interpolation and Lemma \ref{Lemma Estimates of perturbations 2} we get
\begin{align*}
	\|\mathcal{R}\nu(-\Delta)^{\a}\omw_{q+1}^{(p)}\|_{L^{1}_{(\frac{\varsigma_{q}}{2}, T]}L_x^{\rho}}  
	&\lesssim \|\Theta_{q+1}\|_{C_{t}}\|w_{q+1}^{(p)}\|^{\frac{4-2\a}{3}}_{L_{t}^{1}L_x^{\rho}}\|w_{q+1}^{(p)}\|^{\frac{2\a-1}{3}}_{L_{t}^{1}W_x^{3,\rho}}\notag\\
	&\lesssim (1+\|\Ru_{q}\|_{C_{t}L_{x}^{1}}^{2\a+1})\ell^{-7}r_{\perp}^{\frac{2}{\rho}-1}\laq^{2\a-1}\tau^{-\frac{1}{2}}.
\end{align*}
Similarly, we have
\begin{align*}
	&\|\nu(-\Delta)^{\a}\omw_{q+1}^{(c)}\|_{L^{1}_{(\frac{\varsigma_{q}}{2}, T]}L_x^{\rho}}
	\lesssim (1+\|\Ru_{q}\|_{C_{t}L_{x}^{1}}^{2\a+3})\ell^{-13}r_{\perp}^{\frac{2}{\rho}-1}\laq^{2\a-2}\tau^{-\frac{1}{2}},\\
	&\|\mathcal{R}\nu(-\Delta)^{\a}\omw_{q+1}^{(o)}\|_{L^{1}_{(\frac{\varsigma_{q}}{2}, T]}L_x^{\rho}}
	\lesssim
	(1+\|\Ru_{q}\|_{C_{t}L_{x}^{1}}^{2\a+4})\ell^{-6(2\a-1)-20}\sigma^{-1}.
\end{align*}
Since $	\ell^{-7}r_{\perp}^{\frac{2}{\rho}-1}\laq^{2\a-1}\tau^{-\frac{1}{2}}=\ell^{-7}\lambda_{q+1}^{-6\varepsilon}<\ell^{-6(2\a-1)-20}\sigma^{-1}$, we thus get
\begin{align}\label{6-30}
	\|\mathcal{R}\nu(-\Delta)^{\a}w_{q+1}\|_{L^{1}_{(\frac{\varsigma_{q}}{2}, T]}L_x^{\rho}}
	&\lesssim (1+\|\Ru_{q}\|_{C_{t}L_{x}^{1}}^{2\a+4})\ell^{-6(2\a-1)-20}\sigma^{-1}.
\end{align}

\medskip 
\paragraph{\bf Control of noise term.}
The noise term can be estimated in the same manner as  \eqref{4-32}. Hence, we obtain
\begin{align}\label{6-31}
	\|\mathcal{R}\(z_{\ell}-z_{q+1}\)\|_{L_{(\frac{\varsigma_{q}}{2}, T]}^{1}L_x^{\rho}} 
	&\lesssim
	\ell^{\frac{1}{2}}(1+\varsigma_{q}^{-\frac{1}{2}})
	M+\la^{-15(1-\delta)}(\|Z\|_{C^{\frac{1}{2}-\delta}_{t} L_{x}^{2}}+\|Z\|_{C_{t} H_{x}^{1-\delta}}).
\end{align}

\medskip 
\paragraph{\bf Control of  remaining terms:}
For the remaining terms in  (\ref{6-2}), estimating as in (\ref{4-33}) and (\ref{4-35}), we have
\begin{align}\label{6-33}		
	\|v_{\ell}+z_{\ell})\mathring{\otimes}w_{q+1}+w_{q+1}\mathring{\otimes}( v_{\ell}+z_{\ell})\|_{L_{(\frac{\varsigma_{q}}{2}, T]}^{1}L_x^{\rho}}
	&\lesssim
	(\|v_{q}\|_{C_{(\frac{\varsigma_{q}}{2}-\ell, T]} L_{x}^{\infty}}+\|z_{q}\|_{C_{(\frac{\varsigma_{q}}{2}-\ell, T]}L_{x}^{\infty}})\|w_{q+1}\|_{L_{(\frac{\varsigma_{q}}{2}, T]}^{1}L_x^{\rho}} \notag\\
	&\lesssim
	(\|v_{q}\|_{C_{t,x}}
	+\varsigma_{q}^{-\frac{3}{4\a}}M
	+\|Z_{q}\|_{C_{t}L_{x}^{\infty} })
	(1+\|\Ru_{q}\|_{C_{t}L_{x}^{1}}^{5})\ell^{-20}\sigma^{-1}.
\end{align}

Now,  we obtain from  (\ref{6-29})-(\ref{6-33}) that
\begin{align*}
	\|\Ru_{lin}\|_{L_{(\frac{\varsigma_{q}}{2}, T]}^{1}L_x^{1}}
	&\lesssim
	(1+\|\Ru_{q}\|_{C_{t}L_{x}^{1}}^{4})\ell^{-13}r_{\perp}^{\frac{2}{\rho}-1}\lambda_{q+1}^{-1}\sigma\tau^{\frac{1}{2}}
	+(1+\|\Ru_{q}\|_{C_{t}L_{x}^{1}}^{8})\ell^{-38}\sigma^{-1} \\
	&\quad
	+\ell^{\frac{1}{2}}(1+\varsigma_{q}^{-\frac{1}{2}})
	M+\la^{-15(1-\delta)}(\|Z\|_{C^{\frac{1}{2}-\delta}_{t} L_{x}^{2}}+\|Z\|_{C_{t} H_{x}^{1-\delta}})\\
	&\quad+	(\|v_{q}\|_{C_{t,x}}+\varsigma_{q}^{-\frac{3}{4\a}}M+\|Z_{q}\|_{C_{t}L_{x}^{\infty} })(1+\|\Ru_{q}\|_{C_{t}L_{x}^{1}}^{5})\ell^{-20}\sigma^{-1},
\end{align*}
which along with (\ref{2-7}),  (\ref{2-14}), (\ref{2-15}), (\ref{2-22})  and (\ref{5-1})  yields
\begin{align}\label{6-36}
	\|\Ru_{lin}\|_{\Lo^{r}_{\om} L_{(\frac{\varsigma_{q}}{2}, T]}^{1}L_{x}^{1}}
	&\lesssim
	(1+\|\Ru_{q}\|_{\Lo^{4r}_{\om}C_{t}L_{x}^{1}}^{4})\ell^{-13}r_{\perp}^{\frac{2}{\rho}-1}\lambda_{q+1}^{-1}\sigma\tau^{\frac{1}{2}}
	+(1+\|\Ru_{q}\|_{\Lo^{8r}_{\om}C_{t}L_{x}^{1}}^{8})\ell^{-38}\sigma^{-1} \notag\\
	&\quad
	+\ell^{\frac{1}{2}}(1+\varsigma_{q}^{-\frac{1}{2}})M
	+\la^{-15(1-\delta)}(\|Z\|_{\Lo^{2r}_{\om}C_{t}^{1/2-\delta}L_{x}^{2} }+\|Z\|_{\Lo^{2r}_{\om}C_{t}H_{x}^{1-\delta}})\notag\\
	&\quad
	+(\|v_{q}\|_{\Lo^{2r}_{\om}C_{t,x}}+\varsigma_{q}^{-\frac{3}{4\a}}M+\|Z_{q}\|_{\Lo^{2r}_{\om}C_{t}L_{x}^{\infty}} )
	(1+\|\Ru_{q}\|_{\Lo^{10r}_{\om}C_{t}L_{x}^{1}}^{5})\ell^{-20}\sigma^{-1}\notag\\
	&\lesssim
	\dqqq.
\end{align}

\subsubsection{Oscillation error}
By (\ref{3-49}), (\ref{3-50}), (\ref{5-21})  and (\ref{6-8}),  we get
\begin{align}\label{6-37-1}
	\|\Ru_{osc.1}\|_{L_{(\frac{\varsigma_{q}}{2}, T]}^{1}L_{x}^{1}}
	&\lesssim
	\|1-\Theta_{q+1}^{2}\|_{L_{t}^{1}}\|\Ru_{q}\|_{C_{t}L_{x}^{1}}+\|\Theta_{q+1}^{2}\|_{C_{t}^{1}}\|w^{(o)}_{q+1}\|_{L_{t}^{1} L_{x}^{\rho}}\notag\\
	&\lesssim
	\varsigma_{q}\|\Ru_{q}\|_{C_{t}L_{x}^{1}}+(1+\|\Ru_{q}\|_{C_{t}L_{x}^{1}}^{5})\ell^{-20}\sigma^{-1}\varsigma_{q}^{-1}.
\end{align}
Moreover, by the decoupling Lemma \ref{Lemma Decorrelation1} and Lemmas \ref{Lemma temporal building blocks}, \ref{Lemma amplitudes 1}  and \ref{Lemma spatial building blocks2}, 
\begin{align}\label{6-37-2}
	\|\Ru_{osc.2}\|_{L_{(\frac{\varsigma_{q}}{2}, T]}^{1} L_{x}^{\rho}}
	&\lesssim
	\sum_{k\in\Lambda}\|\Theta_{q+1}^{2}\|_{C_{t}}\|g_{(k)}^{2}\|_{L_t^{1}}
	\||\nabla|^{-1}\mathbb{P}_{\neq 0}\big(\mathbb{P}_{\geq \frac{\laq r_{\perp}}{2} }(W_{(k)}\otimes W_{(k)})\big)\nabla(a_{(k)}^{2})\|_{C_{t} L_{x}^{\rho}}\notag\\
	&\lesssim
	\sum_{k\in\Lambda}\|a_{(k)}^{2}\|_{C_{t,x}^{3}}\(\laq r_{\perp}\)^{-1}\|\phi_{(k)}^{2}\|_{L_{x}^{\rho}}\notag\\
	&\lesssim
	(1+\|\Ru_{q}\|_{C_{t}L_{x}^{1}}^{7})\ell^{-32}r_{\perp}^{\frac{2}{\rho}-3}\lambda_{q+1}^{-1}.
\end{align}
By Lemmas \ref{Lemma temporal building blocks}, \ref{Lemma spatial building blocks2}, \ref{Lemma amplitudes 1}   and (\ref{6-10}), we also have
\begin{align}\label{6-37-3}
	\|\Ru_{osc.3}\|_{L_{(\frac{\varsigma_{q}}{2}, T]}^{1} L_{x}^{\rho}}
	&\lesssim
	\sigma^{-1}\|\Theta_{q+1}^{2}\|_{C_{t}}\sum_{k\in\Lambda}\|h_{(k)}\|_{L_t^{1}}
	\|a_{(k)}^{2}\|_{C_{t,x}^{2}}
	\lesssim
	(1+\|\Ru_{q}\|_{C_{t}L_{x}^{1}}^{6})\ell^{-26}\sigma^{-1}.
\end{align}
Thus, combining \eqref{6-37-1}-\eqref{6-37-3} altogether, we obtain that
\begin{align*}
	\|\Ru_{osc}\|_{L_{(\frac{\varsigma_{q}}{2}, T]}^{1}L_x^{1}}
	&\lesssim
	\varsigma_{q}\|\Ru_{q}\|_{C_{t}L_{x}^{1}}+(1+\|\Ru_{q}\|_{C_{t}L_{x}^{1}}^{7})\ell^{-26}\sigma^{-1}\notag,
\end{align*}
which, via (\ref{2-7}), (\ref{2-15}) and \eqref{5-1}, yields that
\begin{align}\label{6-37}
	\|\Ru_{osc}\|_{\Lo^{r}_{\om}L_{(\frac{\varsigma_{q}}{2}, T]}^{1}L_{x}^{1}}
	&\lesssim
	\varsigma_{q}\|\Ru_{q}\|_{\Lo^{r}_{\om}C_{t}L_{x}^{1}}
	+(1+\|\Ru_{q}\|_{\Lo^{7r}_{\om}C_{t}L_{x}^{1}}^{7})\ell^{-26}\sigma^{-1}
	\lesssim
	\dqqq.
\end{align}

\subsubsection{Corrector error}
By (\ref{3-49}), \eqref{5-15},  (\ref{6-3}) and Lemma \ref{Lemma Estimates of perturbations 2},
\begin{align*}
	\|\Ru_{corr}\|_{L_{(\frac{\varsigma_{q}}{2}, T]}^{1}L_x^{1}}
	&\lesssim
	\|\omw_{q+1}^{(c)+(o)}\|_{L_t^{2}L_x^{2}}	(\|w_{q+1}\|_{L_t^{2}L_x^{2}}+\|\omw_{q+1}^{(p)}\|_{L_t^{2}L_x^{2}})
	\lesssim (1+\|\Ru_{q}\|_{C_{t}L_{x}^{1}}^{10})\ell^{-27}\sigma^{-1}.
\end{align*}
Taking into account  (\ref{2-7}), (\ref{2-15}) and \eqref{5-1}  we get
\begin{align}\label{6-38}
	\|\Ru_{corr}\|_{\Lo^{r}_{\om}L_{(\frac{\varsigma_{q}}{2}, T]}^{1}L_{x}^{1}}
	&\lesssim
	(1+\|\Ru_{q}\|_{\Lo^{10r}_{\om}C_{t}L_{x}^{1}}^{10})\ell^{-27}\sigma^{-1}
	\lesssim
	\dqqq.
\end{align}

\subsubsection{ Commutator errors}\label{subsec 6.3.4}
The estimate of  $\|\Ru_{com1}\|_{\Lo^{r}_{\om}L_{(\frac{\varsigma_{q}}{2}, T]}^{1}L_{x}^{1}}$ is same as that in (\ref{4-39}). Thus, we have
\begin{align*}
	\|\Ru_{com1}\|_{L_{(\frac{\varsigma_{q}}{2}, T]}^{1}L_x^{1}}
	&\lesssim
	\ell^{\frac{1}{2}-\delta}	\big(\|v_{q}\|_{C_{t,x}^{1}}+	(1+\varsigma_{q}^{-(\frac{1}{2}-\delta)})M
	+\|Z\|_{C_{t}^{\frac{1}{2}-\delta} L_{x}^{2}}+\|Z\|_{C_{t} H_{x}^{1-\delta}}\big)
	(\|v_{q}\|_{C_{t,x}}+\|z\|_{C_{t} L_{x}^{2}}),
\end{align*}
and 
\begin{align}\label{6-39}
	\|\Ru_{com1}\|_{\Lo^{r}_{\om}L_{(\frac{\varsigma_{q}}{2}, T]}^{1}L_{x}^{1}}
	\lesssim
	\dqqq.		
\end{align}

Concerning the $L_{(\frac{\varsigma_{q}}{2}, T]}^{1}L_x^{1}$-norm of $ \Ru_{com2} $,
by \eqref{5-18}, (\ref{5-39}), (\ref{6-4}),  (\ref{6-31}) and mollification estimate,
\begin{align*}
	\|\Ru_{com2}\|_{L_{(\frac{\varsigma_{q}}{2}, T]}^{1}L_x^{1}}
	&\lesssim
	\big(\|v_{q}\|_{C_{t,x}}
	+(1+\|\Ru_{q}\|_{C_{t}L_{x}^{1}}^{5})\ell^{-20}\sigma^{-1}+\|z\|_{C_{(\frac{\varsigma_{q}}{2}-\ell, T]}L_{x}^{2}}\big)\notag\\
	&\quad\times \big(\ell^{\frac{1}{2}}(1+\varsigma_{q}^{-\frac{1}{2}})M+\la^{-15(1-\delta)}(\|Z\|_{C_{t}^{\frac{1}{2}-\delta} L_{x}^{2}}+\|Z\|_{C_{t} H_{x}^{1-\delta}})\big),
\end{align*}
which, via (\ref{stochastic evolution}), (\ref{2-7}), (\ref{2-14}), (\ref{2-15}) and (\ref{5-1}), yields that
\begin{align}\label{6-40}
	\|\Ru_{com2}\|_{\Lo^{r}_{\om}L_{(\frac{\varsigma_{q}}{2}, T]}^{1}L_{x}^{1}}
	&\lesssim
	\big(\|v_{q}\|_{\Lo^{2r}_{\om}C_{t,x}}+(1+\|\Ru_{q}\|_{\Lo^{10r}_{\om}C_{t}L_{x}^{1}}^{5})\ell^{-20}\sigma^{-1}+\|z\|_{\Lo^{2r}_{\om}C_{(\frac{\varsigma_{q}}{2}-\ell, T]} L_{x}^{2}}\big)\notag\\
	&\quad\times
	\big(\ell^{\frac{1}{2}}(1+\varsigma_{q}^{-\frac{1}{2}})M+\la^{-15(1-\delta)}(\|Z\|_{\Lo^{2r}_{\om}C_{t}^{\frac{1}{2}-\delta}L_{x}^{2}}+\|Z\|_{\Lo^{2r}_{\om}C_{t}H_{x}^{1-\delta}})\big)\notag\\
	&\lesssim
	\dqqq.
\end{align}

Therefore,  combining (\ref{6-36}), and \eqref{6-37}-(\ref{6-40}) altogether,  we conclude that
\begin{equation}\label{6-41}
	\begin{aligned}
		\|\Ru_{q+1}\|_{\Lo^{r}_{\om}L_{(\frac{\varsigma_{q}}{2}, T]}^{1}L_{x}^{1}}\lesssim \dqqq.
\end{aligned}\end{equation}

\subsection{Verification of inductive estimate \eqref{2-16}: near the initial time} For $t\in[0,\frac{\varsigma_{q}}{2}]$,  since $\Theta_{q+1}(t)=0$ and  $w_{q+1}=0$,  we have the same estimate as that in \eqref{4-42-1} and \eqref{4-42-2} for $t\in[0,\frac{\varsigma_{q}}{2}]$. Thus, we have
\begin{equation}\label{6-42}
	\begin{aligned}
		\|\Ru_{q+1}\|_{\Lo^{r}_{\om}L_{t}^{1}L_{x}^{1}}\lesssim \dqqq.
\end{aligned}\end{equation}
As a consequence, we prove the decay estimate \eqref{2-16} for $\Ru_{q+1}$ at level $q+1$. 

Finally,  
we infer from Sections \ref{Sec-S2} and \ref{Sec-Reynolds 2} that the main iteration in Proposition \ref{Proposition Main iteration}, and so Theorems \ref{Thm-Nonuniq-Hyper} and \ref{Theorem Vanishing noise}, 
hold in the supercritical regime $\mathcal{S}_{2}$.

\medskip 
\noindent{\bf Acknowledgment.}
Z. Zeng is supported by Natural Science Foundation of Jiangsu Province 
 (No. SBK20240 43113). D. Zhang is partially supported by the NSFC grants (No. 12271352, 12322108) and
Shanghai Frontiers Science Center of Modern Analysis.  
The authors also thank 
the supports by 
Shanghai Frontiers Science Center of Modern Analysis.

 \end{document}